\newtheorem{theorem}{Theorem}[section]
\newtheorem{lemma}[theorem]{Lemma}
\newtheorem{proposition}[theorem]{Proposition}
\newtheorem{definition}[theorem]{Definition}
\newtheorem{corollary}[theorem]{Corollary}
\newcommand{\nsp}{\upharpoonleft \hspace{-4.5pt}\downharpoonleft}
\newcommand{\bo}{{\mathcal{O}}}
\newcommand{\D}{{\mathcal{D}}}
\newcommand{\q}{{\mathcal{Q}}}
\newcommand{\Gal}{{\rm Gal}}
\newcommand{\lcm}{\mbox{\rm lcm}}
\newcommand{\Cen}{\mbox{\rm C}}
\newcommand{\core}{\mbox{\rm Core}}
\newcommand{\Aut}{\mbox{\rm Aut}}
\newcommand{\inv}{^{-1}}
\newcommand{\Z}{{\mathbb Z}}
\newcommand{\Q}{{\mathbb Q}}
\newcommand{\R}{{\mathbb R}}
\newcommand{\C}{{\mathbb C}}
\newcommand{\HQ}{{\mathbb H}}
\renewcommand{\O}{\mathcal{O}}
\newcommand{\GL}{{\rm GL}}
\newcommand{\SL}{{\rm SL}}
\newcommand{\PSL}{{\rm PSL}}
\newcommand{\matriz}[1]{\begin{array} #1 \end{array}}
\newcommand{\GEN}[1]{\langle #1 \rangle}
\newcommand{\quat}[2]{\left( \frac{#1}{#2} \right)}
\newcommand{\A}{{\mathcal A}}
\newcommand{\B}{{\mathcal B}}
\newcommand{\U}{\mathcal{U}}
\newcommand{\diag}{\operatorname{diag}}
\title{On the Congruence Subgroup Problem for integral group rings}
\author{Mauricio Caicedo}
\address{Departamento de Matem\'{a}ticas, Universidad de Murcia,  30100 Murcia, Spain}
\email{mjose.caicedo@um.es}
\author{\'{A}ngel del R\'{\i}o}
\address{Departamento de Matem\'{a}ticas, Universidad de Murcia,  30100 Murcia, Spain}
\email{adelrio@um.es}
\thanks{This research is partially supported by the Spanish Government under Grant MTM2012-35240 with "Fondos FEDER" and Fundaci\'{o}n S\'{e}neca of Murcia under Grant 04555/GERM/06}
\subjclass[2010]{19B37, 20H05; 20C05}
\keywords{Congruence Subgroup Problem; Integral group rings}
\begin{document}

\begin{abstract}
Let $G$ be a finite group, $\Z G$ the integral group ring of $G$ and $\U(\Z G)$ the group of units of $\Z G$. The Congruence Subgroup Problem for $\U(\Z G)$ is the problem of deciding if every subgroup of finite index of $\U(\Z G)$ contains a congruence subgroup, i.e. the kernel of the natural homomorphism $\U(\Z G) \rightarrow \U(\Z G/m\Z G)$ for some positive integer $m$.
The congruence kernel of $\U(\Z G)$ is the kernel of the natural map from the completion of $\U(\Z G)$ with respect to the profinite topology to the completion with respect to the topology defined by the congruence subgroups. The Congruence Subgroup Problem has a positive solution if and only if the congruence kernel is trivial.
We obtain an approximation to the problem of classifying the finite groups for which the congruence kernel of $\U(\Z G)$ is finite. More precisely, we obtain a list $L$ formed by three families of finite groups and 19  additional groups such that if the congruence kernel of $\U(\Z G)$ is infinite then $G$ has an epimorphic image isomorphic to one of the groups of $L$.
About the converse of this statement we at least know that if one of the 19 additional groups in $L$ is isomorphic to an epimorphic image of $G$ then the congruence kernel of $\U(\Z G)$ is infinite.
However, to decide for the finiteness of the congruence kernel in case $G$ has an epimorphic image isomorphic to one of the groups in the three families of $L$ one needs to know if the congruence kernel of the group of units of an order in some specific division algebras is finite and this seems a difficult problem.

\end{abstract}

\maketitle

\section{Introduction}

Let $A$ be a finite dimensional semisimple rational algebra and $R$ a $\Z$-order in $A$.
Let $\U(R)$ denote the group of units of $R$.
For a positive integer $m$  let $\U(R,m)$ denote the kernel of the natural group homomorphism $\U(R)\rightarrow \U(R/mR)$, i.e.
	$$\U(R,m) = \{u\in \U(R) : u-1\in mR\}.$$
More generally, if $n$ and $m$ are positive integers then $M_n(R)$ denotes the $n\times n$ matrix ring with entries in $R$, $\GL_n(R)$ denotes the group of units of $M_n(R)$, $\SL_n(R)$ denotes the subgroup of $\GL_n(R)$ formed by the elements of reduced norm 1, $\GL_n(R,m)=\U(M_n(R),m)$ and $\SL_n(R,m)=\SL_n(R)\cap \GL_n(R,m)$.

A subgroup of $\U(R)$ (respectively, $\SL_n(R)$) containing $\U(R,m)$ (respectively, $\SL_n(R,m)$) for some positive integer $m$ is called a congruence subgroup of $\U(R)$ (respectively, $\SL_n(R)$).
If $m$ is a positive integer then $R/mR$ is finite. Hence $\U(R,m)$ has finite index in $\U(R)$ and $\SL_n(R,m)$ has finite index in $\SL_n(R)$.
Therefore every congruence subgroup of $\U(R)$ (respectively, $\SL_n(R)$) has finite index in $\U(R)$ (respectively, $\SL_n(R)$).
The \emph{Congruence Subgroup Problem} (\emph{CSP}, for brevity) asks for the converse of this statement.
More precisely, we say that the CSP has a positive solution  for $R$ (respectively, for $\SL_n(R)$) if every subgroup of finite index in $\U(R)$ (respectively, in $\SL_n(R)$) is a congruence subgroup.
See \cite{PrasadRapinchuk} for a survey on a much more general version of the Congruence Subgroup Problem.

Serre introduced a quantitative version of the Congruence Subgroup Problem.
Let $\mathcal{F}$ be the set of normal subgroups of finite index in $\U(R)$ and let $\mathcal{C}$ denote the set of congruence subgroups of $\U(R)$.
Both $\mathcal{F}$ and $\mathcal{C}$ define bases of neighborhoods of $1$ for group topologies in $\U(R)$.
The corresponding completions are the projective limits
    $$\U_{\mathcal{F}}(R) = \varprojlim_{N\in \mathcal{F}} \U(R)/N \quad\text{and}\quad \U_{\mathcal{C}}(R) = \varprojlim_{N\in \mathcal{C}} \U(R)/N.$$
The identity map of $\U(R)$ induces a surjective group homomorphism $\U_{\mathcal{F}}(R)\rightarrow \U_{\mathcal{C}}(R)$ and the kernel of this homomorphism is called the \emph{congruence kernel} of $\U(R)$.
The congruence kernel of $\SL_n(R)$ is defined similarly.
It is easy to see that the CSP has a positive solution if and only if the congruence kernel is trivial.
The quantitative version  of the CSP is the problem of calculating the congruence kernel.
If $S$ is another $\Z$-order in $A$ then $mR\subseteq S$ and $mS\subseteq R$ for some positive integer $m$ and $\U(R)\cap \U(S)$ has finite index in both $\U(R)$ and $\U(S)$.
Using this it is easy to see that the congruence kernel of $\U(R)$ is finite if and only if the congruence kernel of $\U(S)$ is finite. In that case we say that $A$ has the CSP property.


Assume now that $A=\prod_{i=1}^k M_{n_i}(D_i)$ is the Wedderburn decomposition of $A$ (i.e. each $n_i$ is a positive integer and $D_i$ a finite dimensional rational division algebra) and let $R_i$ be an order in $D_i$ for every $i$.
Then $\prod_{i=1}^k M_{n_i}(R_i)$ is an order in $A$, and for each $i$, $Z(R_i)$ is a $\Z$-order in $Z(D_i)\cong Z(M_{n_i}(D_i))$,  $\U(Z(R_i))\cap \SL_n(R_i)$ is finite and $\GEN{\U(Z(R_i)),\SL_n(R_i)}$ has finite index in $\GL_n(R)$.
Combining this with the facts that number fields have the CSP property and that the order chosen to check the CSP property does not make a difference, it is easy to see that $A$ has the CSP property if and only if each non-commutative component $M_{n_i}(D_i)$ has the CSP property if and only if  the congruence kernel of $\SL_{n_i}(R_i)$ is finite for every $i=1,\dots,k$.

The CSP for $\SL_n(R)$, with $R$ an order in a finite dimensional division algebra $D$ has been widely studied and solved except for the case when $n=1$ and $D$ is non-commutative (see e.g. \cite{PrasadRapinchuk}).
We call \emph{exceptional algebras} to the  algebras of one of the following two types:
\begin{itemize}
\item [(EC1)] A non-commutative finite dimensional division rational algebra which is not a totally definite quaternion algebra.
\item [(EC2)] A two-by-two matrix ring over $D$ with $D=\Q$, an imaginary quadratic extension of $\Q$ or a totally definite quaternion algebra over $\Q$.
\end{itemize}
By results of \cite{Bass-Milnor-Serre,Vaserstein,Liehl,Venkataramana1994} every finite dimensional simple algebra not having the CSP property is exceptional. Moreover, the exceptional algebras of type (EC2) have not the CSP property.

This paper addresses the CSP for integral group rings of finite groups.
More precisely our aim is to classify the finite groups $G$ for which  the congruence kernel of  $\U(\Z G)$ is finite (equivalently, the rational group algebra $\Q G$ has the CSP property).
Besides the intrinsic interest of this question its solution has applications in the study of the group of units of $\Z G$, not only because a  solution for the CSP provides relevant information on the normal subgroups of $\U(\Z G)$ but also because it can be used to obtain generators of a subgroup of finite index in $\U(\Z G)$ as it has been shown in \cite{RitterSehgalNilp} and \cite{JespersLealManus}.

We now explain our strategy.
By the  discussion above,  the congruence kernel of $\U(\Z G)$ is finite if and only if every non-commutative simple component of $\Q G$ has the CSP property.
If $N$ is a normal subgroup of $G$ then every simple component of $\Q(G/N)$ is a simple component of $\Q G$.
Therefore, if $\Q G$ has the CSP property then $\Q(G/N)$ has the CSP property.
This suggests the following approach to the problem.
We say that $G$ is \emph{CSP-critical} if $\Q G$ has not the CSP property but $\Q(G/N)$ has the CSP property for every non-trivial normal subgroup of $G$.
Thus  the congruence kernel of $\U(\Z G)$ is infinite if and only if $G$ has a CSP-critical epimorphic image.
The original problem hence reduces to classify the CSP-critical finite groups.
Assume that $G$ is a finite CSP-critical group.
Then $G$ is isomorphic to a subgroup of an exceptional simple component of $\Q G$.
Hence $G$ is a subgroup of either a division algebra or a two-by-two matrix over a division algebra (see the paragraph after Corollary~\ref{CSKFinite}).
The finite subgroups of division algebras have been classified by Amitsur \cite{Amitsur} and the finite subgroup of two-by-two matrices of division algebras have been classified by Banieqbal \cite{baniq}.
Thus, in order to classify the CSP-critical groups it suffices to decide which of the groups of  the classifications of Amitsur and Banieqbal are CSP-critical.

Unfortunately this  strategy encounters a serious difficulty.
Namely, the problem of deciding which algebras of type (EC1) have the CSP property seems to be far from reachable with the known techniques.
So we address a more modest problem which is an approximation to the problem of classifying the CSP-critical groups.
We say that a finite group is \emph{CSP'-critical} if $\Q G$ has an exceptional component but $\Q (G/N)$ has not exceptional components for any non-trivial normal subgroup $N$ of $G$.

The main theorem of this paper is the following (see notation in Section~\ref{SectionNotation}):


\begin{theorem}\label{Main}
A finite group $G$ is CSP'-critical if and only if $G$ is isomorphic to one of the following groups.

\begin{enumerate}


\item\label{Zgrupos1} $C_q\times (C_p\rtimes_2 C_4)$ with $p$ and $q$ different primes such that $3\ne p\equiv -1 \mod 4$, $q>2$ and $2\nmid o_q(p)$.


\item\label{Zgrupos2} $C_p\rtimes_k C_n$ with $n\ge 8$, $p$ an odd prime not dividing $n$, $\frac{n}{k}$ is divisible by all the primes dividing $n$  and one of the following conditions holds:
    \begin{enumerate}
    \item $k=\gcd(n,p-1)$, either $n$ is odd or $p \equiv 1 \mod 4$ and if $p=5$ then $n=8$.
    \item $k=\gcd(n,p-1)$, $p\equiv -1 \mod 4$ and $n\ne 4$ and $v_2(n)=2$.
    \item $3\ne p\equiv -1 \mod 4$, $n=2^{v_2(p+1)+2}$ and $k=2^{v_2(p+1)+1}$.
    \end{enumerate}


\item\label{Q8Cp}
$\q_8 \times C_p$ with $p$ an odd prime and $o_p(2)$ odd.

\item\label{SL3}  $\SL(2,3)=\GEN{i,j}_{\q_8}\rtimes \GEN{g}_3$, with $i^g=j$, $j^g=ij$.

\item\label{SL5} $\SL(2,5)
=\GEN{u,v\mid u^4=v^3=1,(uv)^5=u^2}$.


\item\label{D6}  $\D_6$.

\item\label{D8}  $\D_8$.

\item \label{G16-1}  $\D_{16}^+$.

\item \label{G16-2} $\q_8\rtimes C_2 = \GEN{i,j}_{\q_8}\rtimes \GEN{a}_2$, with $i^a=i\inv$ and $j^a=j$.

\item  \label{G24}$\q_8\times C_3$.

\item \label{G32} $\q_8 \Ydown_2 \D_8$.

\item\label{C5C8} $C_5 \rtimes_2 C_8$.

\item \label{G72} $(C_3\times C_3)\rtimes_2 C_8=(\GEN{a}_3\times \GEN{b}_3)\rtimes_2 \GEN{c}_8$, with $a^c=b^{-1}$ and $b^c=a$.

\item  \label{G96} $\SL(2,3)\Ydown_2 \D_8
$.

\item \label{G384} $\mathcal{C}=(\q_8\times \q_8)\rtimes C_6 = (\GEN{i_1,j_1}_{\q_8}\times \GEN{i_2,j_2}_{\q_8})\rtimes \GEN{b}_6$ with
$$i_1^b=i_2, \quad j_1^b=j_2, \quad i_2^b=j_1 \; \text{ and } \; j_2^b=i_1j_1.$$




\item \label{G48} $\SL(2,3)\Ydown_2 C_4$.


\item\label{SL9} $\SL(2,9)$.


\item \vspace{-8pt}\hspace{-8pt}
\label{G240-1} 
$\matriz{{rcl}
\\
\A^+ = \SL(2,5)\nsp_4 C_8&=&\GEN{v,d|d^8=v^3=1,(d^2v)^5=d^4,v^{d}=vd^{-2}(v,d^2)}\\
&=&\GEN{u,v}_{\SL(2,5)}\nsp_4\GEN{d}_8, 
}$
\\ with $v^d=vu\inv(v,u)$ and $d^2=u$.

\item \vspace{-8pt}\hspace{-8pt}
\label{G240-2} 
$\matriz{{rcl}
\\
\A^- = \SL(2,5)\nsp_4 C_8&=&\GEN{v,d|d^8=v^3=1,(d^{-2}v)^5=d^4,v^{d}=vd^2(v,d^{-2})}\\
&=&\GEN{u,v}_{\SL(2,5)}\nsp_4\GEN{d}_8, 
}$
\\ with $v^d=vu\inv(v,u)$ and $d^2=u^{-1}$. 


\item \label{G160} $\B_1=(\q_8 \Ydown_2 \D_8)\rtimes C_5$ with with $\q_8=\GEN{i,j}$, $\D_8=\GEN{a}_4\rtimes\GEN{b}_2$, $C_5=\GEN{c}_5$,
    $$i^c=j^{-1}b, \quad j^c=i^{-1}, \quad a^c=ia^{-1}b  \quad \text{ and } \quad b^c=i^{-1}a^{-1}.$$


\item\label{G320} $\B_2=(\q_8\Ydown_2 \D_8) \nsp_2 (C_5\rtimes_2 C_4)$, with
	$\q_8=\GEN{i,j}$, $\D_8=\GEN{a}_4\rtimes \GEN{b}_2$, $C_5\rtimes_2 C_4=\GEN{c}_5\rtimes_2 \GEN{d}_4,$
    $$\matriz{{ccccccccccccccc}  i^c&=&i^2j, & j^c&=&i^{-1}b, & a^c&=&jba, & b^c&=&ja, \\
		i^d&=&i^2a, & j^d&=&ib, & a^d&=&i^{-1}, & b^d&=&ja & \text{ and }\; d^2&=&i^2.}$$

\item \label{G1920} $\B=(\q_8 \Ydown_2 \D_8)\nsp_2 \SL(2,5)$ with $\q_8=\GEN{i,j}$, $\D_8=\GEN{a}_4\rtimes\GEN{b}_2$, $\SL(2,5)=\GEN{u,v}$ as in (\ref{SL5}),  
    $$\matriz{{ccccccccccccccc} i^u&=&i^3, & j^u&=&jb,& a^u&=&i^3ab,& b^u&=&b, \\
    i^v&=&i^2jab,&j^v&=&i^3jab,&a^v&=&ib,&b^v&=&ia & \text{ and } \; u^2&=&i^2.}
    $$

\end{enumerate}

%
%
%
%
%
%

\end{theorem}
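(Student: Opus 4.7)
The plan is to reduce the classification to the Amitsur--Banieqbal enumerations of finite subgroups of exceptional algebras and then run a systematic Wedderburn sweep on the resulting candidate list.

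For the reduction, suppose $G$ is CSP'-critical and let $A$ be an exceptional simple component of $\mathbb{Q}G$. The projection $\mathbb{Q}G\twoheadrightarrow A$ restricts to a homomorphism $\rho\colon G\to A^{\times}$; let $N=\ker\rho$. If $N$ were non-trivial, then $A$ would remain a simple component of $\mathbb{Q}(G/N)$, contradicting CSP'-criticality. Hence $\rho$ is injective and $G$ is a finite subgroup of an exceptional algebra. Two cases arise: $A$ is of type (EC1), a non-commutative division algebra that is not a totally definite quaternion algebra, or $A$ is of type (EC2), a $2\times 2$ matrix ring over $\mathbb{Q}$, an imaginary quadratic field, or a totally definite quaternion algebra over $\mathbb{Q}$. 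In case (EC1) I would invoke Amitsur's classification, which enumerates the finite subgroups of division rings as certain $Z$-groups $C_m\rtimes_k C_n$ (with tight arithmetic constraints) together with a short list of groups involving $\SL(2,3)$ and $\SL(2,5)$. In case (EC2) I would invoke Banieqbal's finer classification of the finite subgroups of $M_2(D)$ for the three admissible $D$.

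For each candidate $G$ produced by the combined Amitsur--Banieqbal list I would (i) compute the Wedderburn decomposition of $\mathbb{Q}G$, identify every exceptional component, and record the kernel of the associated representation; and (ii) verify whether some non-trivial $N\trianglelefteq G$ lies in the kernel of every exceptional component (equivalently, whether some exceptional component of $\mathbb{Q}G$ factors through $\mathbb{Q}(G/N)$). Only groups for which step (ii) fails for every non-trivial $N$ survive as CSP'-critical. The expected outcome is the list of $22$ items in the statement: three parametric families and $19$ sporadic groups. For the three families the computation is uniform in $p,q,n,k$, and the arithmetic side-conditions in the theorem ($2\nmid o_q(p)$, $o_p(2)$ odd, the constraints on $v_2(n)$, $p\equiv\pm 1\pmod 4$, the exclusion $p=5,\,n>8$, \emph{etc.}) are precisely what keeps the faithful component exceptional while simultaneously preventing its descent to a proper quotient. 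Conversely, the same calculations certify directly that each listed group is CSP'-critical.

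The main obstacle will be item~(\ref{Zgrupos2}): the three sub-cases (2a)--(2c) require intertwining the arithmetic of $n,\,p,\,k,\,v_2(p\pm 1),\,v_2(n)$ with the local Schur-index analysis of the corresponding cyclotomic crossed products, in order to decide exactly when the unique faithful exceptional component is a division algebra of type~(EC1), when it is a $2\times 2$ matrix of type~(EC2), and whether it descends to a smaller $C_p\rtimes_{k'} C_{n'}$. A secondary technical load comes from the larger sporadic items $\B,\,\B_1,\,\B_2,\,\mathcal{C},\,\A^{\pm}$, where the Wedderburn decomposition must be computed by hand or verified with computer algebra, and where the faithful exceptional component is typically a $2\times 2$ matrix over a totally definite quaternion algebra, requiring explicit identification of the $G$-action.
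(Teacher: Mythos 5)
Your overall strategy — reduce to an embedding of $G$ into an exceptional component, invoke Amitsur for (EC1) and Banieqbal for (EC2), then screen candidates by computing Wedderburn decompositions and checking descent to proper quotients — is indeed the paper's strategy, and the opening reduction is correct. But the part you describe as a ``systematic Wedderburn sweep'' hides two genuine gaps, and these are precisely where the paper invests most of its technical work.

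First, both Amitsur's and Banieqbal's classifications contain infinite parametric families, so the sweep does not terminate unless you first establish constraints truncating the parameters. The paper does this with two distinct mechanisms that you do not mention. For metabelian $G$ it proves that any exceptional SSP component of type (EC2) forces $|G|$ to lie in an explicit finite set $\{6,8,\dots,144\}$ (Proposition~\ref{componentesdessp}), which is what makes the subsequent GAP enumeration finite. For the non-metabelian primitive case it extracts the crucial a priori bound (P4) that every element $g\in G$ satisfies $\varphi(|g|)\le 4$, with $\varphi(|g|)\le 2$ for central $g$; this is what kills almost all of the parameters $m,n,r,\alpha,\beta$ floating through Banieqbal's Theorems 4.4--4.7 and 5.8 and leaves only the sporadic groups. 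Without an explicit statement and proof of such bounds, ``the computation is uniform in $p,q,n,k$'' does not yield a finite, verifiable argument.

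Second, you treat Banieqbal's classification as a single list, but it is a classification of \emph{primitive} finite subgroups of $M_2(D)$. The imprimitive case must be handled separately, and it is not negligible: the paper needs Lemma~\ref{imprimitivos} (Banieqbal's characterization of imprimitivity via an index-2 subgroup $N$ and a kernel $K$ with $K\cap K^g=1$), its own Lemma~\ref{ImprimitiveSSP} linking imprimitivity back to the strong Shoda pair machinery, and the two structural Lemmas~\ref{losHs} and~\ref{G1152} describing the possible $N$ with two disjoint normal copies of $\SL(2,3)$. These produce the groups $\SL(2,3)\Ydown_2\D_8$ and $\mathcal{C}=(\q_8\times\q_8)\rtimes C_6$ in the theorem, which do not appear on Banieqbal's primitive list at all. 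A sweep over that list would simply miss them. So to repair the proposal you would need to (i) prove explicit order/element-order bounds playing the role of Proposition~\ref{componentesdessp} and property (P4), and (ii) insert the primitive/imprimitive dichotomy and treat the imprimitive branch with an argument in the spirit of Lemmas~\ref{ImprimitiveSSP}--\ref{G1152}.
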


We now discuss how far is the classification of CSP'-critical groups given by Theorem~\ref{Main} from the desired classification of CSP-critical groups and the consequences of Theorem~\ref{Main} to the original problem of classifying the finite groups $G$ for which the congruence kernel of $\U(\Z G)$ is finite.
If the congruence kernel of $\U(\Z G)$ is not finite then $\Q G$ has an exceptional component and therefore $G$ has a CSP'-critical epimorphic image. Therefore we at least have the following.

\begin{corollary}\label{CSKFinite}
Let $G$ be a finite group. If $G$ has not an epimorphic image isomorphic to any of the groups listed in Theorem~\ref{Main} then the congruence kernel of $\U(\Z G)$ is finite.
\end{corollary}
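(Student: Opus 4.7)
The plan is to prove the contrapositive: if the congruence kernel of $\U(\Z G)$ is infinite, then $G$ has an epimorphic image isomorphic to one of the groups listed in Theorem~\ref{Main}.

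The first step combines two facts already recorded in the introduction. If the congruence kernel of $\U(\Z G)$ is infinite then $\Q G$ does not have the CSP property, so at least one non-commutative simple component of $\Q G$ lacks the CSP property; by the theorems of Bass--Milnor--Serre, Vaserstein, Liehl and Venkataramana cited above, every non-exceptional finite-dimensional simple rational algebra has the CSP property, so any such offending component must be exceptional. Hence $\Q G$ has at least one exceptional simple component.

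The second step passes to a suitable quotient. Consider
$$\mathcal{N} = \{N \trianglelefteq G : \Q(G/N) \text{ has an exceptional simple component}\},$$
which is non-empty since $1 \in \mathcal{N}$. Because $G$ is finite, $\mathcal{N}$ admits a maximal element $N_0$; set $H = G/N_0$. Then $\Q H$ has an exceptional simple component by the choice of $N_0$. On the other hand, every non-trivial normal subgroup of $H$ has the form $N/N_0$ for some $N \trianglelefteq G$ strictly containing $N_0$; such an $N$ does not belong to $\mathcal{N}$, so $\Q(H/(N/N_0)) \cong \Q(G/N)$ has no exceptional simple component. Combining these two observations shows that $H$ is CSP'-critical.

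Applying Theorem~\ref{Main} to $H$ identifies it with one of the groups in the stated list, and since $H$ is by construction an epimorphic image of $G$, this is precisely the desired conclusion. The only substantive content is Theorem~\ref{Main} itself; the corollary is a short bookkeeping argument built on it together with the already-cited CSP results for simple algebras, so no step presents a real obstacle beyond what is already assumed.
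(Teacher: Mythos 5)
Your proof is correct and follows essentially the same route as the paper's (which is compressed into the one sentence immediately preceding the corollary): the infinitude of the congruence kernel forces an exceptional component in $\Q G$, a maximal quotient argument produces a CSP'-critical epimorphic image, and Theorem~\ref{Main} identifies it. You have merely spelled out the maximality step that the paper leaves implicit.
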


Suppose that $G$ is CSP'-critical and let $A$ be an exceptional component of $\Q G$. Let $\pi:\Q G \rightarrow A$ be a surjective homomorphism of algebras and let $N=\{g\in G:\pi(g)=1\}$. Then $A$ is an exceptional simple component of $\Q(G/N)$. Thus, by assumption, $N=1$ and hence $G$ is a subgroup of $A$.
Thus, if $G$ is CSP'-critical then $G$ can be embedded in any of its exceptional components.
If one of this exceptional components is of type (EC2) then $\Q G$ has not the CSP property and hence $G$ is CSP-critical.
Along the proof of Theorem~\ref{Main} we will show that the only groups in the list of Theorem~\ref{Main} not having an exceptional component of type (EC2) are precisely those of types (\ref{Zgrupos1})-(\ref{Q8Cp}) (Proposition~\ref{CSPDivision}).
So we have

\begin{corollary}\label{CriticalSeguros}
Let $G$ be a finite group.
\begin{enumerate}
 \item If $G$ is isomorphic to one of the groups in items (\ref{SL3})-(\ref{G1920}) of Theorem~\ref{Main} then $G$ is CSP-critical.
 \item If $G$ has an epimorphic image isomorphic to one of the groups in items (\ref{SL3})-(\ref{G1920}) of Theorem~\ref{Main} then the congruence kernel of $\U(\Z G)$ is infinite.
\end{enumerate}
\end{corollary}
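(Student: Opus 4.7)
The plan is to reduce both statements to the classification already provided by Theorem~\ref{Main}, combined with the announced Proposition~\ref{CSPDivision} that pinpoints which groups in the list actually carry an exceptional component of type (EC2), and with the fundamental fact recalled in the introduction that every (EC2) algebra fails the CSP while every non-exceptional simple component satisfies it.

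For part~(1), let $G$ be one of the groups in items (\ref{SL3})--(\ref{G1920}). By Theorem~\ref{Main}, $G$ is CSP'-critical, so $\Q G$ has an exceptional simple component and, for every non-trivial normal subgroup $N$ of $G$, the rational group algebra $\Q(G/N)$ has no exceptional simple components. By Proposition~\ref{CSPDivision}, at least one exceptional component of $\Q G$ is of type (EC2); since such algebras do not have the CSP property, neither does $\Q G$. On the other hand, for any non-trivial normal $N$, every simple component of $\Q(G/N)$ is either commutative or a non-exceptional simple algebra, and then by the results of Bass-Milnor-Serre, Vaserstein, Liehl and Venkataramana recalled in the introduction each such component has the CSP property; hence so does $\Q(G/N)$. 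Therefore $G$ is CSP-critical, which is exactly the content of (1).

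For part~(2), suppose $G$ has an epimorphic image $H$ isomorphic to one of the groups in items (\ref{SL3})--(\ref{G1920}). The surjection $G\twoheadrightarrow H$ induces a surjection of $\Q$-algebras $\Q G\twoheadrightarrow \Q H$, so every simple component of $\Q H$ is (isomorphic to) a simple component of $\Q G$. By part~(1) applied to $H$, $\Q H$ has a simple component of type (EC2), and the same component occurs in $\Q G$. Since (EC2) algebras fail the CSP, $\Q G$ also fails the CSP, so by the equivalence recalled in the introduction (between triviality/finiteness of the congruence kernel and the CSP property) the congruence kernel of $\U(\Z G)$ is infinite.

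The non-routine ingredient on which this whole argument rests is Proposition~\ref{CSPDivision}: showing that each of the nineteen groups in (\ref{SL3})--(\ref{G1920}) admits a simple component of $\Q G$ isomorphic to $M_2(\Q)$, to $M_2(K)$ for some imaginary quadratic field $K$, or to $M_2(D)$ for some totally definite rational quaternion algebra $D$, and that the three infinite families (\ref{Zgrupos1})--(\ref{Q8Cp}) do not. This is a case-by-case analysis of Wedderburn decompositions, and it is the only part of the argument that is not essentially formal; the rest is bookkeeping on top of the classification of exceptional simple algebras and the functoriality of passing to quotients. Once Proposition~\ref{CSPDivision} is in hand, Corollary~\ref{CriticalSeguros} follows immediately as above.
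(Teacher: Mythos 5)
Your proposal is correct and follows essentially the same route as the paper: the (EC2) component of $\Q G$ guaranteed by Theorem~\ref{Main} and Proposition~\ref{Suficiencia} yields failure of the CSP property, CSP'-criticality yields the CSP property for all proper quotients (so $G$ is CSP-critical), and the (EC2) component lifts from $\Q H$ to $\Q G$ whenever $H$ is an epimorphic image of $G$. The only minor slip is attributing the statement that items (\ref{SL3})--(\ref{G1920}) have an (EC2) component (while items (\ref{Zgrupos1})--(\ref{Q8Cp}) do not) to Proposition~\ref{CSPDivision}, when it is really the ``moreover'' clause of Proposition~\ref{Suficiencia} that establishes it via the explicit Wedderburn decompositions; the paper's own introductory text makes the same imprecise cross-reference, so the content of your argument is unaffected.
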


Assume that $G$ is a finite CSP-critical group  other than the groups is items (\ref{SL3})-(\ref{G1920}) of Theorem~\ref{Main}.
Then $\Q G$ has not any exceptional component of type (EC2) and therefore $G$ is a subgroup of a division algebra.
This is the case of the groups of types (\ref{Zgrupos1})-(\ref{Q8Cp})  in Theorem~\ref{Main}.
For example if $G=\q_8\times C_p$ as in (\ref{Q8Cp}) then the only exceptional component of $\Q G$ is isomorphic to the quaternion algebra $\HQ(\Q(\zeta_p))$. In this case $G$ is a CSP-critical if and only if $\HQ(\Q(\zeta_p))$ has not the CSP property.
So to decide the question for these groups one need to  decide whether this algebra has the CSP property.
Similarly, if $G=C_q\times (C_p\rtimes_2 C_4)$ as in (\ref{Zgrupos1}) (respectively, $G\cong C_p\rtimes C_n$  as in (\ref{Zgrupos2})) then the only exceptional component of $\Q G$ is isomorphic to the quaternion algebra $A=\quat{(\zeta_p-\zeta_p\inv)^2,-1}{\Q(\zeta_q,\zeta_p+\zeta_p\inv)}$ (respectively, the cyclic algebra $A=(\Q(\zeta_{pk})/F(\zeta_k),\zeta_k)$, where $F$ is the only subfield of $\Q(\zeta_p)$ with $[\Q(\zeta_p):F]=\frac{n}{k}$). In both cases $G$ is CSP-critical if and only if $A$ has not the CSP property.
In case these groups are CSP-critical, then  the CSP-critical groups are exactly the CSP'-critical groups, i.e. those listed in Theorem~\ref{Main}. However if some of these groups is not CSP-critical then there could exists some  CSP-critical groups, not included in Theorem~\ref{Main}.
Such groups should have one proper epimorphic image isomorphic to one of the groups in items (\ref{Zgrupos1})-(\ref{Q8Cp}) of Theorem~\ref{Main}.

\section{Notation, preliminaries and some tools}\label{SectionNotation}

In this section we fix the notation which will be used throughout the paper.

The cardinality of a set $X$ is denoted by $\lvert X \rvert$. As it is customary, the Euler totient function will be denoted $\varphi$.

For $r,m$ and $p$ integers with $p$ prime and $\gcd(r,m)=1$ let
\begin{center}
\begin{tabular}{rl}
$v_p(m)\;=$ &maximum non-negative integer $k$ such that $p^k$ divides $m$; \\
$o_m(r)\;=$ &multiplicative order of $r$ module $m$, \\ & i.e. the minimum positive  integer $k$ such that $r^k \equiv 1 \mod m$; \\
$\zeta_m\;=$ &complex primitive $m$-th root of unity.
\end{tabular}
\end{center}



We use the standard group and ring theoretical notation.
For example, the center of a group or ring $X$ is denoted $Z(X)$; if $a \in X$ and $Y\subseteq X$ then  $C_Y(a)$ denotes the centralizer of $a$ in $Y$ and we use the exponential notation for conjugation: $a^b = b\inv a b$.

If $G$ is a group, then $G'$ denotes the commutator subgroup of $G$ and $\exp(G)$ the exponent of $G$. If $g\in G$ then $\lvert g \rvert$ denotes the order of $g$ and $g^G$ denotes the conjugacy class of $g$ in $G$.
If $X\subseteq G$ then $\GEN{X}$ denotes the subgroup generated by $X$. This is simplified to $\GEN{g_1,\dots,g_n}$ for $X=\{g_1,\dots,g_n\}$.
Sometimes, we write $\GEN{g}_n$ to emphasize that $g$ has order $n$ or $\GEN{g_1,\dots,g_n}_G$ to represent a group isomorphic to $G$ and generated by $g_1,\dots,g_n$ (Theorem~\ref{Main} contains some examples of this).
If $H$ is a subgroup of $G$ then $N_G(H)$ denotes the normalizer of $H$ in $G$ and $\core_G(H)=\cap_{g\in G} H^G$, the core of $H$ in $G$.
The notation $H\le G$ (respectively, $H<G, H\unlhd G, H\lhd G$) means that $H$ is a subgroup (respectively, proper subgroup, normal subgroup, proper normal subgroup) of $G$.
If $p$ is a prime integer the $O_p(G)$ denotes the unique maximal normal $p$-subgroup of $G$.

We use the following constructions of groups.
\begin{eqnarray*}
G\rtimes_m H &=& \text{ semidirect product of $H$ acting on $G$ with kernel of order $m$}; \\
G\Ydown_m H &=& \text{ central product of $G$ and $H$ with  subgroups of order  m  identified}; \\
G \nsp_m H &=& \text{ semidirect product of $H$ acting on $G$ with subgroups of order } m \\ && \text{ identified}.
\end{eqnarray*}

Some groups that we encounter in the paper are
\begin{eqnarray*}
C_n &=& \text{ cyclic group of order } n; \\
\D_{2m}&=& \GEN{a}_m \rtimes \GEN{b}_2 \text{ with } a^b=a^{-1} \text{ (dihedral group of order }2m);\\
\q_{4m}&=& \GEN{j}_{2m}\nsp_2 \GEN{i}_4 \text{ with } j^i=j^{-1} \text{ (quaternion group of order } 4m); \\
\end{eqnarray*}
\begin{eqnarray*}
\D_{16}^+&=& \GEN{a}_8 \rtimes \GEN{b}_2, \text{ with } a^b=a^5. \\
\D_{16}^-&=& \GEN{a}_8 \rtimes \GEN{b}_2, \text{ with } a^b=a^3. \\
S_m &=& \text{ symmetric group on } m \text{ symbols;} \\
A_m &=& \text{ alternating group on } m \text{ symbols;} \\
\SL(n,q) &=& \{a\in M_n(\mathbb{F}_q) : \det(a)=1\}, \text{ with } \mathbb{F}_q\text{ the field with } q \text{ elements};\\
\PSL(n,q) &=& \SL(n,q)/Z(\SL(n,q));\\
T^{*}_\alpha&=&\q_8\rtimes_{3^{\alpha-1}} \GEN{g}_{3^{\alpha}}
\text{ (observe that } T^{*}_1\cong \SL(2,3)).
\end{eqnarray*}

We also will encounter the following metacyclic groups
    \begin{equation}\label{Gmr}
    G_{m,r}=\GEN{a,b | a^m=1, b^n=a^t,a^b=a^r}=\GEN{a}_m\nsp_s \GEN{b}_{ns} =  \GEN{a^s}_t\rtimes_s \GEN{b}_{ns}
    \end{equation}
with
    \begin{equation}\label{GmrCond}
    \gcd(m,r)=1, n=o_m(r), s=gcd(r-1,m), st=m, \text{ and } \gcd(ns,t)=1.
    \end{equation}


Let $F$ be a field of characteristic different of 2 and let $a,b$ non-zero elements of $F$. Then $\quat{a,b}{F}$ denotes the quaternion $F$-algebra $F[i,j|ji=-ij,i^2=a,j^2=b]$. Moreover $\HQ(F)=\quat{-1,-1}{F}$. A \emph{totally definite quaternion algebra} is a quaternion algebra of the form $\quat{a,b}{F}$ with $F$ a totally real number field and $a$ and $b$ totally negative, i.e. for every embedding $\sigma:F\rightarrow \C$, $\sigma(F)\subseteq \R$ and $\sigma(a)$ and $\sigma(b)$ are negative.

If $R$ is a ring and $G$ is a group then $R*^{\alpha}_{\tau} G$  denotes a \emph{crossed product} with action $\alpha:G\rightarrow \Aut(R)$ and twisting $\tau:G\times G \rightarrow \U(R)$ \cite{Passman89}, i.e. the associative ring $R*^{\alpha}_{\tau} G=\bigoplus_{g\in G} R u_g$ with multiplication given by the following rules:
	$$u_g a = \alpha_g(a) u_g \quad \text{and} \quad u_g u_h = \tau(g,h) u_{gh}, \quad (a\in R, g,h\in G).$$
In case $G=\GEN{g}_n$  then the crossed product $R*^{\alpha}_{\tau} G$ is completely determined by $\sigma=\alpha_g$ and $a=u_g^n$. In this case we follow the notation of \cite{Reiner75} and denote the crossed product by $(R,\sigma,a)$.
 A \emph{classical crossed product} is a crossed product $L*^{\alpha}_{\tau} G$, where $L/F$ is a finite Galois extension, $G = \Gal(L/F)$ and $\alpha$ is the natural action of $G$ on $L$. A classical crossed product $L *^{\alpha}_{\tau} G$ is denoted by $(L/F,\tau)$ \cite{Reiner75}.
A cyclic algebra is a classical crossed product $(L/F,\tau)$ where $\Gal(L/F)$ is  cyclic.
If $\Gal(L/F)=\GEN{\sigma}_n$ and $a=u_{\sigma}^n$ then the cyclic algebra $(L/F,\tau)$ is usually denoted $(L/F,a)$.
Every classical crossed product $(L/F,\tau)$ is a central simple $F$-algebra \cite[Theorem~29.6]{Reiner75}.

Consider a finite group $G$ with a cyclic normal subgroup $A=\GEN{a}_m$  and assume that $A$ is also a maximal abelian subgroup of $G$.
Fix a right inverse $\phi:G/A\rightarrow G$ of the natural projection $G\rightarrow G/A$ (i.e. $\phi(gA)A=gA$ for every $g\in G$).
Then we define a crossed product
    $$\Q(G,A)=\Q(\zeta_m)*^{\alpha}_{\tau} G/A,$$
with action and twisting given by
    \begin{eqnarray*}
    \alpha_{gA}(\zeta_m) &=& \zeta_m^i, \mbox{ if } a^{\phi(gA)}=a^iA \\
    \tau(gA,g'A) &=& \zeta_m^j, \mbox{ if }  \phi(gg'A)\inv\phi(gA)\phi(g'A)=a^j,
    \end{eqnarray*}
for each $g,g'\in G$. (Notice that $G/A$ is abelian because $A$ is the kernel of the action of the $G$ on $A$ by conjugation.) The algebra $\Q(G,A)$ is independent of the map $\phi$ up to isomorphisms.
Using the natural isomorphism $\Aut(A)\rightarrow \Aut(\Q(\zeta_m))$ one can transfer the action of $G$ on $A$ by conjugation to an action of $G$ on $\Q(\zeta_n)$. If $F$ is the fixed field of this action then $gA\mapsto \alpha_{gA}$ defines an isomorphism $G/A\rightarrow \Gal(\Q(\zeta_m)/F)$ and if we see this isomorphism as an identification then $\Q(G,A)$ is the classical crossed product $(\Q(\zeta_m)/F,\tau)$.

We will need to compute the Wedderburn decomposition of $\Q G$ for some finite groups. For that we use the method introduce in \cite{ORS} which was extended in \cite{Olteanu} and implemented in the GAP package wedderga \cite{Wedderga}. We introduce the main lines of this method now (see \cite{ORS} for details).

Let $G$ be a finite group. For a subgroup $H$ of $G$, let $\widehat{H}=\frac{1}{|H|}\sum_{h\in H} h$. Clearly, $\widehat{H}$ is an idempotent of $\Q G$ which is central if and only if $H$ is normal in $G$. If $K\lhd H\leq G$  and $K\neq H$ then let
    $$\varepsilon(H,K)=\prod (\widehat{K}-\widehat{M})=\widehat{K}\prod (1-\widehat{M}),$$
where $M$ runs through the set of all minimal normal subgroups of $H$ containing $K$ properly. We extend this notation by setting $\varepsilon(H,H)=\widehat{H}$. Clearly $\varepsilon(H,K)$ is an idempotent of the group algebra $\Q G$. Let $e(G,H,K)$ be the sum of the distinct $G$-conjugates of $\varepsilon(H,K)$, that is, if $T$ is a right transversal of $C_G(\varepsilon(H,K))$ in $G$, then $$e(G,H,K)=\sum_{t\in T}\varepsilon(H,K)^t.$$ Clearly, $e(G,H,K)$ is a central element of $\Q G$ and if the $G$-conjugates of $\varepsilon(H,K)$ are orthogonal, then $e(G,H,K)$ is a central idempotent of $\Q G$.

A \emph{strong Shoda pair} of $G$ is a pair $(H,K)$ of subgroups of $G$ satisfying the following conditions: $K\leq H\unlhd N_G(K)$, $H/K$ is cyclic and a maximal abelian subgroup of $N_G(K)/K$ and the different $G$-conjugates of $\varepsilon(H,K)$ are orthogonal.

If $(H,K)$ is a strong Shoda pair of $G$ then $H$ has a linear character with kernel $K$, which we denote $\lambda_{H,K}$.
Moreover $\lambda_{H,K}^G$, the character of $G$ induced by $\lambda_{H,K}$, is irreducible, $\ker \lambda_{H,K}^G=\core_G(K)$
and $e(G,H,K)$ is the a unique primitive central idempotent $e$ of $\Q G$ with $\lambda_{H,K}^G(e)\ne 0$.
If $N=N_G(K)$ and $n=[G:N]$ then
	\begin{equation}\label{SSPComponent}
	 \Q G \; e(G,H,K) \cong M_n(\Q(N/K,H/K))
	\end{equation}
A simple component of $\Q G$ of the form $\Q Ge(G,H,K)$, for $(H,K)$ a strong Shoda pair is called an \emph{SSP component} of $\Q G$.

A group is said to be \emph{strongly monomial} if all the simple components of $\Q G$ are SSP components.
Every abelian-by-supersolvable groups is strongly monomial \cite{ORS}.
The following theorem shows that for metabelian groups we can compute the primitive central idempotents of $\Q G$ using some special strong Shoda pairs.

\begin{theorem}\label{SSPmetabelian}\cite{ORS}
Let $G$ be a metabelian finite group and let $A$ be a maximal abelian subgroup of $G$ containing $G'$. The primitive central idempotents of $\Q G$ are the elements of the form $e(G,H,K)$, where $(H,K)$ is a pair of subgroups of $G$ satisfying the following conditions:
\begin{enumerate}
\item \label{metabelian1}$H$ is a maximal element in the set $\{B\leq G \mid A\leq B \mbox{ and } B'\leq K\leq B\}$;
\item \label{metabelian2}$H/K$ is cyclic.
\end{enumerate}
\end{theorem}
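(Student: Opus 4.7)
The plan is to establish the equality between $\{e(G,H,K) : (H,K) \text{ satisfies (1) and (2)}\}$ and the set of primitive central idempotents of $\Q G$ by proving both inclusions.

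For the inclusion stating that every such $e(G,H,K)$ is primitive central, I would verify that any pair $(H,K)$ satisfying (1) and (2) is in fact a strong Shoda pair of $G$, and then invoke the general theory recalled in the paragraph preceding the theorem. The condition $H' \le K$ gives $K \unlhd H$, and (2) asserts $H/K$ cyclic. The maximality in (1) implies simultaneously that $H \unlhd N_G(K)$ and that $H/K$ is maximal abelian in $N_G(K)/K$: indeed, if $xK \in N_G(K)/K$ centralizes $H/K$, then $\GEN{H,x}$ contains $A$ and satisfies $\GEN{H,x}' \le K$, so by (1), $\GEN{H,x} = H$ and $x \in H$; a similar argument with $H$ replaced by its conjugates under $N_G(K)$ yields $H \unlhd N_G(K)$. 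Orthogonality of the $G$-conjugates of $\varepsilon(H,K)$ then follows from a direct computation exploiting the normality of $A$ in $G$ (which holds because $G' \le A$) together with the maximal-abelian property of $H/K$ in $N_G(K)/K$.

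For the reverse inclusion, let $e$ be a primitive central idempotent of $\Q G$ and $\chi$ an irreducible complex character of $G$ whose $\Gal(\Q(\chi)/\Q)$-orbit corresponds to $e$. Since $A \supseteq G'$, $A \unlhd G$ and $G/A$ is abelian. Applying Clifford theory to $A \unlhd G$, I would pick a linear constituent $\lambda$ of $\chi_A$ and set $T = I_G(\lambda)$; as $T/A$ is a subgroup of the abelian group $G/A$, it is itself abelian. A projective-representation argument, based on the alternating form on $T/A$ attached to the $2$-cocycle obstructing a linear extension of $\lambda$ to $T$, produces a subgroup $A \le H_0 \le T$ with $H_0/A$ maximal isotropic, together with a linear character $\mu$ of $H_0$ extending $\lambda$ such that $\mu^G = \chi$. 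Setting $K_0 = \ker\mu$ yields $A \le H_0$, $H_0' \le K_0$, and $H_0/K_0$ cyclic. It remains to adjust $(H_0, K_0)$ to a pair $(H,K)$ satisfying the maximality in (1) while preserving the cyclicity in (2); this is done by enlarging $H_0$ to a maximal element $H$ of the set in (1) (with $K = K_0$, or possibly a suitable modification) and verifying that $H/K$ remains cyclic, using that $H/K$ is abelian and sits inside $N_G(K)/K$ above the already cyclic $H_0/K_0$ forced by the projective-representation construction. The equality $e(G,H,K) = e$ then follows from the explicit character-theoretic description of $e(G,H,K)$ as the primitive central idempotent associated to $\lambda_{H,K}^G$.

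The main obstacle is the projective-representation step together with the subsequent adjustment to achieve (1) and (2) simultaneously. The metabelian hypothesis enters in an essential way here: the abelianness of $T/A$ ensures that a maximal isotropic subgroup of $T/A$ for the alternating form $(x,y) \mapsto c(x,y)\,c(y,x)^{-1}$ exists, so that $\lambda$ extends linearly to its preimage $H_0$ in $T$; without this, one would have to work with genuinely projective representations. The careful bookkeeping to preserve cyclicity of $H/K$ after enlarging $H_0$ to a maximal $H$, and to show that different choices of $\chi$ within a Galois orbit lead to $G$-conjugate pairs $(H,K)$ (and hence to the same $e(G,H,K)$), is the most delicate part and is what distinguishes this theorem from the corresponding statement for general strongly monomial groups.
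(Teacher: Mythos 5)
This theorem is quoted from \cite{ORS} and the paper supplies no proof of its own, so there is no internal argument to compare against; I will instead assess your sketch directly.

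Your overall plan (show the pairs in (1)--(2) are strong Shoda pairs, then go the other way via Clifford theory on $A\unlhd G$ and a projective-representation/maximal-isotropic argument) is the standard route and is essentially right. Two corrections. First, the ``similar argument'' for $H\unlhd N_G(K)$ is both gappy and unnecessary: $\GEN{H,H^g}'\le K$ does not obviously hold, but you do not need it --- since $G'\le A\le H$, the subgroup $H$ is automatically normal in all of $G$, hence in $N_G(K)$. Second, and more importantly, the ``delicate adjustment'' you flag at the end is not actually a gap: the pair $(H_0,K_0)$ produced by the maximal-isotropic construction already satisfies the maximality condition (1), so no enlargement is needed and cyclicity is never at risk. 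To see this, suppose $B\supseteq H_0$ with $A\le B$ and $B'\le K_0\le B$. Then $[A,B]\le B'\le K_0$ and $[A,B]\le A$ (as $A\unlhd G$), so $[A,B]\le K_0\cap A=\ker\lambda$; this forces $B$ to stabilize $\lambda$, i.e.\ $B\le T$. Moreover $B'\le K_0$ and $B'\le G'\le A$ give $\lambda(B')=1$, so $B/A$ is isotropic for the form $\beta(xA,yA)=\lambda([x,y])$. Since $B/A\supseteq H_0/A$ and $H_0/A$ is a \emph{maximal} isotropic, $B/A=H_0/A$, hence $B=H_0$. Thus $(H_0,K_0)$ is already maximal in the sense of (1), with $H_0/K_0$ cyclic because $\mu$ is a linear character of $H_0$ faithful on $H_0/K_0$. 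For the same reason the worry about different choices of $\chi$ in a Galois orbit leading to different pairs is moot: you only need to exhibit one admissible pair $(H,K)$ for each primitive central idempotent $e$ of $\Q G$, since $e(G,H,K)$ is by construction the unique rational primitive central idempotent not annihilated by $\lambda_{H,K}^G=\mu^G=\chi$.
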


The classification of the finite groups which are subgroups of division rings was obtained by Amitsur \cite{Amitsur}.
If $G$ is a finite subgroup of a division ring then a Sylow subgroup of $G$ is either cyclic or a quaternion 2-group.
A \emph{Z-group} is a finite subgroup of a division ring with all Sylow subgroup cyclic. For the readers convenience we include in the following theorem the classification of finite subgroups of division rings in the form presented in \cite{shirvani}.

\begin{theorem}\cite{Amitsur,shirvani}\label{SubgruposAD}

\begin{enumerate}
 \item[(Z)] The Z-groups are
	\begin{enumerate}
	 \item the finite cyclic groups,
	 \item $C_m \rtimes_2 C_4$ with $m$ odd and $C_4$ acting by inversion on $C_m$ and
    \item $C_m\rtimes_k C_n$ with $\gcd(m,n)=1$ and, using the following notation
    \begin{eqnarray*}
    P_p&=& \text{ Sylow } p\text{-subgroup of } C_m,\\
    Q_p&=& \text{ Sylow } p\text{-subgroup of } C_n,\\
    X_p&=&\{q\mid n : q \text{ prime and } (P_p,Q_q)\ne 1\},\\
    R_p&=&\prod_{q\in X_p} Q_q;
    \end{eqnarray*}
    we have $C_n=\prod_{p\mid m} R_p$ and the following properties hold for every prime $p\mid m$ and $q\in X_p$:
        \begin{enumerate}
        \item $v_{q}\left(o_{\frac{\vert G\vert}{\vert P_p\vert \; \vert R_p\vert }}(p)\right)<o_{q^{v_q(k)}}(p)$,
        \item if $q$ is odd or  $p\equiv 1 \mod 4$ then $v_{q}(p-1) \le v_q(k)$ and
        \item if $q=2$ and $p\equiv -1 \mod 4$ then $v_2(k)$ is either $1$ or greater than $v_2(p+1)$.
	   \end{enumerate}
    \end{enumerate}
 \item[(NZ)]  The finite subgroups of division rings which are not Z-groups are
	\begin{enumerate}
	\item $\bo^{*}=\GEN{s,t|(st)^2=s^3=t^4}$  (binary octahedral group),
	\item $\q_{m}$ with $v_2(m)\ge 3$.
  \item $\q_8\times M$ with $M$ a $Z$-group of odd order  such that $o_{|M|}(2)$ is odd,
	\item $\SL(2,3)\times M$, with $M$ a $Z$-group of order coprime to 6 and $o_{|M|}(2)$  odd, and
  \item $\SL(2,5)$.
	\end{enumerate}
\end{enumerate}
\end{theorem}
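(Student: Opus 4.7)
The plan is to split the proof into a structural half, showing that the Sylow structure of any finite $G\subseteq D^{\times}$ forces $G$ into one of the listed families, and a constructive half, where for each family an explicit embedding into a division ring is produced.

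First I would establish the well-known Herstein--Amitsur lemma: if $x,y\in G$ commute and have equal prime-power order $p^k$, then $\Q(x,y)\subseteq D$ is a subfield whose group of roots of unity is cyclic, so $\GEN{x,y}$ is cyclic. Applied to each Sylow $p$-subgroup $P$ this says every abelian subgroup of $P$ is cyclic, and a standard lemma on finite $p$-groups then forces $P$ to be cyclic when $p$ is odd and either cyclic or generalised quaternion when $p=2$. Moreover, since $g-1$ is invertible in $D$ for each $g\ne 1$, the natural left action of $G$ on $D$ yields a fixed-point-free complex representation, so $G$ is a Frobenius complement in the sense of Zassenhaus--Wolf.

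Combining the fixed-point-free property with the Sylow restriction gives the short list of possible isomorphism types. In the Z-case the Frobenius-complement classification restricted to solvable groups with all Sylows cyclic forces $G$ to be metacyclic, $C_m\rtimes_k C_n$ with $\gcd(m,n)=1$. In the non-Z case $G$ has a quaternion Sylow $2$-subgroup and must be one of $\q_{2^t}$ ($t\ge 3$), $\bo^{*}$, $\SL(2,5)$, or a direct product of an odd Z-group $M$ with $\q_8$ or $\SL(2,3)$; the coprimality of the direct factors and the requirement that their orders share no common prime come directly from Herstein's lemma applied between the commuting factors.

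For the constructive direction I would realise each candidate inside a concrete crossed-product algebra and read off necessary and sufficient conditions via Brauer--Hasse--Noether. For a metacyclic $G_{m,r}$ the natural ambient algebra is the cyclic algebra $(\Q(\zeta_m)/F,\zeta_n)$ with $F$ the fixed field of $\GEN{b}$ acting on $\Q(\zeta_m)$; $G$ embeds in a division ring precisely when this cyclic algebra has Schur index equal to $n$, and a prime-by-prime computation of local Hasse invariants converts this global condition into the three numerical requirements (i)--(iii) of (Z)(c). On the non-Z side each candidate sits in an appropriate totally definite quaternion algebra over a (real) cyclotomic field: $\SL(2,3)$ and $\q_8$ in $\HQ(\Q)$, $\SL(2,5)$ and $\bo^{*}$ in $\HQ$ over $\Q(\zeta_5+\zeta_5\inv)$ and $\Q(\sqrt 2)$ respectively, and the direct products $\q_8\times M$ and $\SL(2,3)\times M$ in $\HQ(\Q(\zeta_{|M|}))$; the condition ``$o_{|M|}(2)$ odd'' is exactly the local criterion that $\HQ$ remains a division algebra after the cyclotomic enlargement of the centre.

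The main obstacle is the arithmetic in (Z)(c). Turning ``$(\Q(\zeta_m)/F,\zeta_n)$ is a division algebra'' into the three stated inequalities is routine for odd primes $q\mid n$ via the Hasse-invariant formulas for cyclic algebras, but at $q=2$ when $p\equiv -1\bmod 4$ the local $2$-adic cyclotomic extension is non-cyclic, and one has to separate the subcases $v_2(k)=1$ and $v_2(k)>v_2(p+1)$; this is exactly the dichotomy recorded in condition (iii). A milder version of the same local analysis controls the cyclotomic constraints ``$o_{|M|}(2)$ odd'' on the non-Z side.
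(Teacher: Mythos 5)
The paper does not prove this theorem; it is quoted verbatim from Amitsur and from the form given in Shirvani--Wehrfritz, so there is no in-paper argument to compare against. That said, your sketch is essentially Amitsur's original strategy (reduce to a fixed-point-free representation, classify the possible group-theoretic shapes, then decide embeddability via arithmetic of cyclotomic crossed products), so the architecture is sound. The structural half is fine: commuting elements of equal prime-power order generate a cyclic group inside a subfield of $D$, forcing cyclic/generalised quaternion Sylows, and $g-1$ being a unit for $g\ne 1$ does yield a fixed-point-free representation, placing $G$ in the Zassenhaus--Wolf list.

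There are, however, concrete problems in the constructive half. First, the cyclic algebra you attach to $G_{m,r}=C_m\rtimes_k C_n$ is off: the action of $\GEN{b}$ on $\Q(\zeta_m)$ has a kernel of order $k$, so $[\Q(\zeta_m):F]=n/k$, not $n$, and the twist comes from $b^{n/k}$ which maps to a primitive $k$-th root of unity that does not live in $\Q(\zeta_m)$ when $\gcd(k,m)=1$. The correct algebra (this is the paper's Lemma~\ref{DivisionAlgebraZGrupo} and the discussion preceding it, and it reappears in Lemma~\ref{SemidirectoComponenteUnica}) is $\bigl(\Q(\zeta_{mk})/F(\zeta_k),\zeta_k\bigr)$ of degree $n/k$, and the embeddability criterion is that this algebra be a division algebra, not that its Schur index be $n$. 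As written your formula would lead you to compute invariants of the wrong algebra, so this is a real error, not a typo. Second, your sketch merges (Z)(b) into (Z)(c). They are genuinely distinct in the classification: $C_m\rtimes_2 C_4$ with $C_4$ acting by inversion embeds in a division ring for every odd $m$, but if $m$ has a prime divisor $p\equiv 1\bmod 4$ this group violates condition (b) of (Z)(c) (there $v_2(p-1)\ge 2>1=v_2(k)$). The paper's own proof of Proposition~\ref{CSPDivision} treats (Z)(b) and (Z)(c) as separate branches for exactly this reason. Finally, the passage from ``Frobenius complement with quaternion Sylow $2$-subgroup'' to the short non-Z list needs an explicit pruning step: the Zassenhaus--Wolf list of Frobenius complements is strictly larger than the list of groups embeddable in division rings (e.g.\ most of the ``type II--IV'' complements do not embed), and you currently jump over the argument that rules them out.
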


Following the proof of Theorem~\ref{SubgruposAD}, in either \cite{Amitsur} or \cite{shirvani}, one can discover a minimal division ring containing each of the groups in the classification.
We will need this for the non-abelian Z-groups.
Assume $G=\GEN{a}_m\rtimes_k \GEN{b}_n$ with $\gcd(m,n)=1$, $b^a=a^r$ and $n=o_m(r)$.
Then $A=\GEN{a,b^{\frac{n}{k}}}$ is cyclic and normal and maximal abelian in $G$ and hence $(A,1)$ is a strong Shoda pair of $G$.
Moreover, $a\mapsto \zeta_m$, $b\mapsto u_b$ determines an injective group homomorphisms $G\rightarrow \U(\Q(G,A))$.
Furthermore, $\Q(G,A)$ is the algebra given by the presentation $\Q(\zeta_{mk})[u_b | \zeta_m^{u_b}=\zeta_m^r, u_b^{\frac{n}{k}}=\zeta_k]$.
If $f:G\rightarrow \U(D)$ is an injective group homomorphism then $f(a)$ and $f(b^{\frac{n}{k}})$ are commuting roots of unity of order $m$ and $k$ respectively and $f(a^b)=f(a)^r$.
Therefore $\zeta_m\mapsto f(a)$ and $u_b\mapsto f(b)$ defines an algebra homomorphism $\Q(G,A)\rightarrow D$, which is injective because $\Q(G,A)$ is simple.
In particular, if $D$ is a division algebra then so is $\Q(G,A)$.
This proves the following:

\begin{lemma}\label{DivisionAlgebraZGrupo}
Let $G=\GEN{a}_m\rtimes_k \GEN{b}_n$ with $\gcd(m,n)=1$, $b^a=a^r$ and $n=o_m(r)$ and let $A=\GEN{a,b^{\frac{n}{k}}}$.
Then $G$ is a subgroup of a division algebra if and only if $\Q(G,A)$ is a division algebra.
\end{lemma}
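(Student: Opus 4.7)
The plan is to formalize the sketch given in the paragraph immediately preceding the lemma, whose essential content is an algebra homomorphism $\Q(G,A)\to D$ built from any given embedding $f\colon G\hookrightarrow \U(D)$. First I would verify that $A=\GEN{a,b^{n/k}}$ is a cyclic (using $\gcd(m,k)=1$), normal, and maximal-abelian subgroup of $G$. This identifies $(A,1)$ as a strong Shoda pair of $G$, so via (\ref{SSPComponent}) applied with $N=G$ I obtain the presentation
$$\Q G\cdot e(G,A,1)\cong \Q(G,A)=\Q(\zeta_{mk})\bigl[u_b\mid \zeta_m^{u_b}=\zeta_m^r,\; u_b^{n/k}=\zeta_k\bigr],$$
which, being a classical crossed product, is a central simple algebra; in particular it is either a division algebra or a proper matrix algebra.

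The ``if'' direction is then immediate: the assignment $a\mapsto \zeta_m$, $b\mapsto u_b$ respects the defining relations of $G$ inside $\Q(G,A)$, so it yields a group homomorphism $G\to \U(\Q(G,A))$; a cardinality check shows the image has order $|G|$, so the map is injective. Hence if $\Q(G,A)$ is a division algebra then $G$ embeds in one.

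For the converse, suppose $f\colon G\hookrightarrow \U(D)$ with $D$ a division algebra. The elements $f(a)$ and $f(b^{n/k})$ commute and have coprime orders $m$ and $k$, so $\GEN{f(a),f(b^{n/k})}$ is a finite abelian subgroup of $\U(D)$; being contained in the commutative subfield of $D$ that it generates, it is cyclic of order $mk$ and spans a subfield isomorphic to $\Q(\zeta_{mk})$. I would then fix an embedding $\iota\colon \Q(\zeta_{mk})\hookrightarrow D$ with $\iota(\zeta_m)=f(a)$ and $\iota(\zeta_k)=f(b^{n/k})$. The conjugation relation $\iota(\zeta_m)^{f(b)}=\iota(\zeta_m^r)$ and the twist relation $f(b)^{n/k}=\iota(\zeta_k)$ are then both satisfied in $D$, so by the universal property of the crossed product, $\iota$ together with $u_b\mapsto f(b)$ extends to an algebra homomorphism $\phi\colon \Q(G,A)\to D$. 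Simplicity of $\Q(G,A)$ forces $\phi$ to be injective, and any $\Q$-subalgebra of a division algebra is itself a division algebra.

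The main delicate step is pinning down $\iota$ so that $\iota(\zeta_k)=f(b^{n/k})$ rather than just some $k$-th root of unity in $D$: a priori $f(b^{n/k})$ is only \emph{a} primitive $k$-th root of unity, and it is precisely the synchronization $\iota(\zeta_k)=f(b^{n/k})$ that makes the twisting relation $u_b^{n/k}=\zeta_k$ of the crossed product match the image under $\phi$ on the nose. Once this choice is made, verifying the universality-of-crossed-product step reduces to checking the two relations above.
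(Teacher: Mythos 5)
Your proposal is correct and follows the same route as the paper: identify $(A,1)$ as a strong Shoda pair, read off the crossed-product presentation of $\Q(G,A)$, and push an arbitrary embedding $f\colon G\hookrightarrow\U(D)$ to an algebra map $\Q(G,A)\to D$ which is injective by simplicity. The one point you flag as delicate — choosing $\iota$ so that $\iota(\zeta_k)=f(b^{n/k})$ — is indeed the spot the paper glosses over, and it is handled exactly as you suggest (coprimality of $m$ and $k$ gives via CRT a generator $\alpha$ of the cyclic group $\GEN{f(a),f(b^{n/k})}$ with $\alpha^k=f(a)$ and $\alpha^m=f(b^{n/k})$, and $\iota(\zeta_{mk})=\alpha$ does the job).
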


\section{Sufficiency}

In this section we prove the sufficiency part of Theorem~\ref{Main}, namely we prove that all the groups listed in the theorem are CSP'-critical.
We have to prove that for each $G$ in the list of Theorem~\ref{Main}, $\Q G$ has an exceptional component while $\Q \overline{G}$ has not exceptional components for any  proper epimorphic image $\overline{G}$ of $G$.
Table~\ref{InfoCSP} displays the exceptional component obtained in each case and, except for the first three infinite families, identify the groups in the GAP library.

\begin{table}[h]
$$\matriz{{llllll}
\# & G & \text{Excep. Comp.} & 
    \text{GAP ID} \\\hline

(\ref{Zgrupos1}) & C_q\times (C_p\rtimes_2 C_4) & \quat{(\zeta_p-\zeta_p\inv)^2,-1}{\Q(\zeta_q,\zeta_p+\zeta_p\inv)} & 
    \\

(\ref{Zgrupos2}) & C_p\rtimes_k C_n & (\Q(\zeta_{pk})/F(\zeta_k),\zeta_k) & 
    \\
& & ([\Q(\zeta_p):F]=\frac{n}{k})  & 
    \\

(\ref{Q8Cp}) & \q_8 \times C_p, & \HQ(\Q(\zeta_p)) & 
\\

(\ref{SL3}) & \SL(2,3) & M_2(\Q(\zeta_3)) & 
    [24,3] \\

(\ref{SL5}) & \SL(2,5) & M_2\quat{-1,-3}{\Q} & 
    [120,5] & \\

(\ref{D6}) & \D_6 & M_2(\Q) & [6,1]\\

(\ref{D8}) & \D_8 & M_2(\Q) & [8,3] \\

(\ref{G16-1}) & \D_{16}^+ & M_2(\Q(\zeta_4)) & [16,6] \\

(\ref{G16-2}) & \q_8\rtimes C_2 & M_2(\Q(\zeta_4)) & [16,13] \\

(\ref{G24}) & \q_8\times C_3 & M_2(\Q(\zeta_3)) & 
    [24,11] \\

(\ref{G32}) & \q_8 \Ydown_2 \D_8 & M_2(\HQ(\Q)) & [32,50] \\

(\ref{C5C8}) & C_5 \rtimes_2 C_8 & (\Q(\zeta_5)/\Q,-1) & 
    [40,3] \\

(\ref{G72}) & (C_3\times C_3)\rtimes_2 C_8, & M_2\quat{-1,-3}{\Q} & 
[72,19] \\

(\ref{G96}) & \SL(2,3)\Ydown_2 \D_8 & M_2(\HQ(\Q)) & 
    [96,202] \\

(\ref{G384}) & \mathcal{C} & M_2(\HQ(\Q)) & 
    [384,618] \\

(\ref{G48}) & \SL(2,3)\Ydown_2 C_4 & M_2(\Q(\zeta_4)) & 
    [48,33] \\

(\ref{SL9}) & \SL(2,9) & M_2\quat{-1,-3}{\Q} & 
[720,409] & \\

%
%
%
%
%
%
%
%
%
(\ref{G240-1}) &\A^+ & (\Q(\zeta_5)/\Q,-1) & 
    [240,90] \\

(\ref{G240-2}) &\A^- & (\Q(\zeta_5)/\Q,-1) & 
[240,89] \\

%
(\ref{G160}) & \B_1 & M_2(\HQ(\Q)) & 
    [160,199] \\
 (\ref{G320}) & \B_2 & M_2(\HQ(\Q)) & 
    [320, 1581] \\
(\ref{G1920}) & \B & M_2(\HQ(\Q)) & 
    [1920,241003]

}$$
\caption{\label{InfoCSP} The list of CSP'-critical groups. The third column displays the exceptional component of the rational group algebra. The last column represents the identification of the group in the GAP library of small groups \cite{GAP} except for the first three families of groups.}
\end{table}

To prove that the groups of type (\ref{Zgrupos2}) are CSP'-critical we need the following lemma.

\begin{lemma}\label{SemidirectoComponenteUnica}
Let $G=\GEN{a}_p\rtimes_{k} \GEN{b}_n$ with $p$ prime not dividing $n$.
Let $A=\GEN{a,b^{n/k}}$ and let $F$ be the only subfield of $\Q(\zeta_p)$ of degree $\frac{(p-1)k}{n}$.
Then the non-commutative simple components of $\Q G$ are the algebras of the form $B_h=\Q G \; e(G,A,\GEN{b^{hn/k}})$ with $h\mid k$. Moreover $B_h \cong (\Q(\zeta_{ph})/F(\zeta_h),\zeta_h)$ for every $h\mid k$ .
\end{lemma}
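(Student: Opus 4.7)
The plan is to apply the strong Shoda pair machinery of Theorem~\ref{SSPmetabelian} to the metabelian group $G$ with its maximal abelian normal subgroup $A$, then compute the corresponding simple components as cyclic algebras.

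First I would establish the structural setup. Let $r\in\Z$ with $a^b=a^r$, so that $r$ has multiplicative order $n/k$ modulo $p$; the hypothesis that the kernel of the action of $\GEN{b}_n$ on $\GEN{a}_p$ has order $k$ forces $\GEN{b^{n/k}}$ to centralize $a$. Combined with $\gcd(p,k)=1$, this makes $A=\GEN{a,b^{n/k}}$ cyclic of order $pk$, normal in $G$, and maximal abelian (since any $b^j\notin A$ acts nontrivially on $a$). If $n=k$ then $G$ is abelian and the lemma is vacuous; otherwise $G'=\GEN{a}\subseteq A$ and $G$ is metabelian.

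For each $h\mid k$ set $K_h=\GEN{b^{hn/k}}$, a central subgroup of $G$ of order $k/h$. I would verify that $(A,K_h)$ is a strong Shoda pair: $A/K_h$ is cyclic of order $ph$ and maximal abelian in $G/K_h$ (the action of $b$ on $a$ retains order $n/k$ modulo $K_h$, as $\gcd(p,k/h)=1$), while every minimal subgroup of $A$ strictly containing $K_h$ (namely $\GEN{a}K_h$ and, for each prime $q\mid h$, the central group $K_{h/q}$) is normal in $G$; hence $\varepsilon(A,K_h)$ is $G$-invariant and $e(G,A,K_h)=\varepsilon(A,K_h)$. Since $N_G(K_h)=G$, (\ref{SSPComponent}) yields $B_h\cong\Q(G/K_h,A/K_h)$, a classical crossed product of $\Q(\zeta_{ph})=\Q(A/K_h)$ by $G/A\cong C_{n/k}$. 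The action of $bA$ sends $\zeta_p\mapsto\zeta_p^r$ and fixes $\zeta_h$, so its fixed field is $F(\zeta_h)$. Choosing the natural lift $\phi(b^iA)=b^i$ for $0\le i<n/k$, the twisting is trivial except on $\phi(b^{n/k-1}A)\phi(bA)=b^{n/k}$, giving $u_{bA}^{n/k}=b^{n/k}$, which is identified with $\zeta_h$ under the embedding $A/K_h\hookrightarrow\Q(\zeta_{ph})$. Hence $B_h\cong(\Q(\zeta_{ph})/F(\zeta_h),\zeta_h)$.

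Finally, I would show completeness via Theorem~\ref{SSPmetabelian}. If $A\lneq H\lneq G$ then $H/A$ is a subgroup of the cyclic group $G/A$, so $H'\subseteq A$; on the other hand $H$ contains some $b^m$ acting nontrivially on $a$, whence $H'\supseteq\GEN{a^{r^m-1}}=\GEN{a}$, so $H'=\GEN{a}$. Requiring $H'\le K$ forces $\GEN{a}\le K$, but then $B=G$ satisfies $B'=\GEN{a}\le K$, contradicting the maximality of $H$. Hence $H\in\{A,G\}$. Pairs with $H=G$ have $G'\le K$, so $G/K$ is abelian and the component is commutative. Pairs with $H=A$ satisfy $K\not\supseteq\GEN{a}$ (otherwise $H=G$ would be admissible), i.e.\ $K\le\GEN{b^{n/k}}$, whose subgroups are exactly the $K_h$ for $h\mid k$. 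A dimension count
\[\sum_{h\mid k}(n/k)(p-1)\varphi(h)+n=(p-1)n+n=pn=|G|,\]
using $\sum_{h\mid k}\varphi(h)=k$, confirms that the $B_h$ together with the commutative components exhaust $\Q G$. The main technical obstacle is the explicit identification of the twisting in the crossed product as $\zeta_h$ (rather than some other primitive $h$-th root of unity): this depends on carefully tracking the image of $b^{n/k}$ under the embedding $A/K_h\hookrightarrow\Q(\zeta_{ph})$ and matching it with a fixed choice of $\zeta_h$.
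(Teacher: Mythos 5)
Your proof is correct and follows essentially the same route as the paper: apply Theorem~\ref{SSPmetabelian} to identify the admissible strong Shoda pairs as $(G,K)$ with $G'\leq K$ (giving commutative components) or $(A,K_h)$ with $h\mid k$, then use (\ref{SSPComponent}) to realize each $B_h$ as the cyclic algebra $(\Q(\zeta_{ph})/F(\zeta_h),\zeta_h)$. The paper's own proof is far terser, leaving both the classification of pairs $(H,K)$ and the explicit crossed-product computation implicit, while your dimension count is an additional sanity check not present in the paper.
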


\begin{proof}
As $G'=\GEN{a}$ and $A$ is cyclic and maximal abelian in $G$, a pair of subgroups of $G$ satisfying the conditions of Theorem~\ref{SSPmetabelian}  is either of the form $(G,K)$ with $G'\subseteq K$ or of the form $(A,K)$ with $K\cap G'=1$.
If $G'\subseteq K$ then $\Q Ge(G,G,K)$ is commutative. Hence the lemma follows from Theorem~\ref{SSPmetabelian} and (\ref{SSPComponent}).
\end{proof}

\begin{proposition}\label{Suficiencia}
If $G$ is one of the groups listed in Theorem~\ref{Main} then $G$ is CSP'-critical. Moreover $\Q G$ has an exceptional component of type (EC1) if and only if $G$ is of one of the types (\ref{Zgrupos1})-(\ref{Q8Cp}) and $\Q G$ has an exceptional component of type (EC2) if and only if $G$ is of one of the types (\ref{SL3})-(\ref{G1920}).
\end{proposition}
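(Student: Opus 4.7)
The plan is, for each of the 22 entries $G$ in Theorem~\ref{Main}, to verify two separate claims: (i) the algebra $\Q G$ contains the exceptional component recorded in Table~\ref{InfoCSP}, and (ii) for every nontrivial normal subgroup $N\unlhd G$ the algebra $\Q(G/N)$ has no exceptional component at all. The last sentence of the proposition would then follow by inspection of Table~\ref{InfoCSP}: the algebras listed for entries (\ref{Zgrupos1})--(\ref{Q8Cp}) are non-commutative division algebras whose centres strictly contain $\Q$ and include a non-totally-real cyclotomic element, so they are of type (EC1); the algebras for (\ref{SL3})--(\ref{G1920}) are visibly $2\times 2$ matrix rings over $\Q$, over an imaginary quadratic field, or over $\HQ(\Q)$, hence of type (EC2).

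For the three infinite families, step (i) would be handled by direct algebraic computation. Both (\ref{Zgrupos1}) and (\ref{Zgrupos2}) are metacyclic and covered by Lemma~\ref{SemidirectoComponenteUnica}, which identifies their non-commutative simple components as the cyclic algebras $B_h=(\Q(\zeta_{ph})/F(\zeta_h),\zeta_h)$ for $h\mid k$, the claimed exceptional component being $B_k$. The numerical hypotheses in (\ref{Zgrupos2})(a)--(c) are a translation of Amitsur's conditions in Theorem~\ref{SubgruposAD}(Z)(c), so Lemma~\ref{DivisionAlgebraZGrupo} promotes $B_k$ to a division algebra containing $G$; family (\ref{Zgrupos1}) is treated analogously after observing that the arithmetic assumptions make $C_q\times(C_p\rtimes_2 C_4)$ itself a $Z$-group. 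Family (\ref{Q8Cp}) reduces to $\Q(\q_8\times C_p)\cong \Q(\zeta_p)^{\oplus 4}\oplus\HQ(\Q(\zeta_p))$, and the assumption that $o_p(2)$ is odd is precisely what prevents $\HQ(\Q(\zeta_p))$ from splitting. In each case the centre contains $\zeta_p$ or $\zeta_q$, hence is not totally real, and the resulting division algebra is not a totally definite quaternion algebra, confirming type (EC1).

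For the 19 sporadic entries (\ref{SL3})--(\ref{G1920}), step (i) is a finite computer calculation. Using the GAP ID supplied in the last column of Table~\ref{InfoCSP}, wedderga \cite{Wedderga} (which implements the strong Shoda pair method of Theorem~\ref{SSPmetabelian} together with its extensions in \cite{Olteanu} for the non-metabelian groups $\SL(2,3)$, $\SL(2,5)$, $\SL(2,9)$, $\SL(2,3)\Ydown_2\D_8$, $\SL(2,3)\Ydown_2 C_4$, $\mathcal{C}$, $\A^{\pm}$, $\B_1$, $\B_2$, $\B$) returns the Wedderburn decomposition of $\Q G$; matching the exceptional summand against the third column of the table establishes (i) and simultaneously exhibits the (EC2) structure. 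For step (ii) one enumerates the normal subgroups of $G$ in GAP and, for each proper quotient, checks via wedderga that none of its simple components is exceptional: an exceptional component would have to be $M_n(D)$ with $n\ge 2$ and $D\in\{\Q,\text{imaginary quadratic},\text{totally definite quaternion}/\Q\}$, or a non-commutative division algebra not of totally definite quaternion type, and both possibilities are mechanically excluded.

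The hardest part of the argument will be step (ii) for the three infinite families, where the verification must be uniform over infinitely many parameter values $p,q,n,k$. The proper quotients are fortunately short to enumerate: one either kills $C_p$ (obtaining an abelian quotient, which contributes no exceptional components), passes to a subquotient of $C_n$ or $C_{4q}$, or combines the two. Lemma~\ref{SemidirectoComponenteUnica} applied to each such quotient again describes its non-commutative components as cyclic algebras $(\Q(\zeta_{ph'})/F'(\zeta_{h'}),\zeta_{h'})$ with $h'$ a proper divisor of the relevant integer, and the task is to show that each such algebra fails both (EC1) and (EC2). The threshold conditions in the statement---$n\ge 8$, $n\ne 4$, the exclusion $3\ne p$, and the clause $p=5\Rightarrow n=8$---are exactly what forces every such smaller cyclic algebra to be either commutative or non-exceptional, so each arithmetic clause in the hypotheses of (\ref{Zgrupos1})--(\ref{Zgrupos2}) corresponds to eliminating one specific quotient from consideration, and I expect this matching to be where the bulk of the case work sits.
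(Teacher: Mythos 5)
Your overall plan mirrors the paper's: use Lemma~\ref{SemidirectoComponenteUnica} and Lemma~\ref{DivisionAlgebraZGrupo} (plus direct strong--Shoda computations) for the three infinite families, compute Wedderburn decompositions with wedderga for the 19 sporadic groups, and verify the no-exceptional-component condition on proper quotients by listing them. This matches the paper's strategy closely. However, there is one genuine gap in how you dispose of the ``(EC2) by inspection'' step, and one small inaccuracy.

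The gap: you assert that the exceptional components listed for types (\ref{SL3})--(\ref{G1920}) are ``visibly $2\times 2$ matrix rings over $\Q$, over an imaginary quadratic field, or over $\HQ(\Q)$.'' This is false for the algebra $A=(\Q(\zeta_5)/\Q,-1)$, which appears for entries (\ref{C5C8}), (\ref{G240-1}) and (\ref{G240-2}). As a cyclic algebra of degree $4$ over $\Q$, $A$ could a priori be a degree-$4$ division algebra (which would make it of type (EC1), not (EC2)), or $M_4(\Q)$ (not exceptional at all), or $M_2(D)$ with $D$ a quaternion division algebra (which might or might not be totally definite). The paper proves that $A$ is not a division algebra by an exponent argument, and then --- crucially --- that $\R\otimes_\Q A$ is not split, via a Frobenius--Schur indicator computation for the induced character $\lambda_{\GEN{a},1}^G$ on $G=C_5\rtimes_2 C_8$. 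Only after this does the conclusion $A\cong M_2(D)$ with $D$ totally definite follow. Your sketch needs this argument (or an equivalent Brauer-group/ramification computation showing $A$ ramifies at $\infty$ and at exactly one finite prime, and that the finite-place invariants have order $2$), otherwise the classification of $(\Q(\zeta_5)/\Q,-1)$ as type (EC2) is unjustified and the ``Moreover'' clause of the proposition is not established.

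A minor point: for family (\ref{Zgrupos1}) you invoke Lemma~\ref{SemidirectoComponenteUnica}, but that lemma is stated for groups of the form $C_p\rtimes_k C_n$ with $p$ prime, not for $C_q\times (C_p\rtimes_2 C_4)$. The paper handles (\ref{Zgrupos1}) by a separate direct application of Theorem~\ref{SSPmetabelian}, listing the four non-commutative components $A_1,\dots,A_4$ explicitly. Your comment that the group ``is itself a $Z$-group'' is correct and suffices to get the division-algebra property via Lemma~\ref{DivisionAlgebraZGrupo}, but you should not cite Lemma~\ref{SemidirectoComponenteUnica} for this family without first generalizing it to cyclic groups of composite order in the role of $C_p$. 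Also, your parenthetical ``$M_n(D)$ with $n\ge 2$'' should be $n=2$; the definition of (EC2) only allows $2\times 2$ matrices.
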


\begin{proof}
(\ref{Zgrupos1}) Assume that $G=C_q\times (C_p\rtimes_2 C_4)=\GEN{a}_{pq}\rtimes_2 \GEN{b}_4$ satisfies the conditions of (\ref{Zgrupos1}).
Let $A=\GEN{a,b^2}$, a cyclic subgroup of index 2 in $G$.

Using Theorem~\ref{SSPmetabelian} one can calculate the non-commutative simple components of $\Q G$. They are
\begin{eqnarray*}
 A_1 &=& \Q G e(G,A,1) \cong \Q(\zeta_{pq}/\Q(\zeta_q,\zeta_p+\zeta_p\inv),-1) \cong \quat{(\zeta_p-\zeta_p\inv)^2,-1}{\Q(\zeta_q,\zeta_p+\zeta_p\inv)}, \\
 A_2 &=& \Q G e(G,A,\GEN{b^2}) \cong \Q(\zeta_{pq}/\Q(\zeta_q,\zeta_p+\zeta_p\inv),1) \cong M_2(\Q(\zeta_q,\zeta_p+\zeta_p\inv)), \\
 A_3 &=& \Q G e(G,A,\GEN{a^p}) \cong \Q(\zeta_{p}/\Q(\zeta_p+\zeta_p\inv),-1) \cong \quat{(\zeta_p-\zeta_p\inv)^2,-1}{\Q(\zeta_p+\zeta_p\inv)} \text{ and} \\
 A_4 &=& \Q G e(G,A,\GEN{a^p,b^2}) \cong \Q(\zeta_{p}/\Q(\zeta_p+\zeta_p\inv),1) \cong M_2(\Q(\zeta_p+\zeta_p\inv)).
\end{eqnarray*}
Observe that $m=pq$, $n=4$ and $k=2$ satisfy the conditions of (Z)(c) in Theorem~\ref{SubgruposAD}.
Thus $G$ is a subgroup of a division algebra and hence, by Lemma~\ref{DivisionAlgebraZGrupo}, $A_1=\Q(G,\GEN{a,b^2})$ is a division ring.
It is not totally definite because its center contains a primitive root of unity of order $q$. Thus $A_1$ is an exceptional algebra of type (EC1).
However $A_3$ is a totally definite quaternion algebra because $(\zeta_p-\zeta_p\inv)^2<0$ and $A_2$ and $A_4$ are not exceptional because $[\Q(\zeta_p+\zeta_p\inv):\Q]=\frac{p-1}{2}>2$.
On the other hand every non-abelian proper quotient of $G$ is isomorphic to either $C_p\rtimes C_4$, $\D_{2p}$ or $C_q\times \D_{2p}$. $\Q(C_p \rtimes C_4)$ has two non-commutative components isomorphic to $A_3$ and $A_4$ respectively, the only non-commutative component of $\D_{2p}$ is isomorphic to $A_4$ and the non-commutative components of $\Q(C_q\times \D_{2p})$  are isomorphic to $A_2$ and $A_4$ respectively.
Thus $\Q \overline{G}$ has not exceptional components for any proper quotient $\overline{G}$ and we conclude that $G$ is CSP'-critical.

(\ref{Zgrupos2})
Assume that $G=C_p\rtimes_k C_n=\GEN{a}_p\rtimes_k \GEN{b}_n$ with $p$, $n$ and $k$ as in (\ref{Zgrupos2}), and let $A=\GEN{a,b^{\frac{n}{k}}}$.
The non-commutative simple components of $\Q G$ are the algebras  $B_h$ of Lemma~\ref{SemidirectoComponenteUnica} with $h\mid k$.
The degree of $B_h$ is $\frac{n}{k}$.
Moreover, $B_h\cong \Q(H_h,A_h)$ with $H_h=G/\GEN{b^{h\frac{n}{k}}}\cong C_p\rtimes_h C_{\frac{hn}{k}}$ and $A_h=A/\GEN{b^{h\frac{n}{k}}}$.
By Lemma~\ref{DivisionAlgebraZGrupo}, $B_h$ is a division algebra if and only if $H_h$ is one of the groups in items (Z)(b) or (Z)(c) in Theorem~\ref{SubgruposAD}.
Observe that $G$ satisfies the conditions of (Z)(c) in Theorem~\ref{SubgruposAD}.
Hence $B_k$ is a division algebra.
If $B_k$ is a totally definite quaternion algebra then $\frac{n}{k}=2$ and $k=2$ because the centre of $B_k$ has a root of unity of order $k$.
Hence $n=4$ in contradiction with the hypothesis. Thus $B_k$ is an exceptional algebra of type (EC1).

Any non-abelian simple component of the rational group algebra of a proper quotient $\overline{G}$ of $G$ is isomorphic to $B_h$ for some proper divisor $h$ of $k$.
So, in order to prove that $\Q \overline{G}$ has not exceptional components, it is enough to prove that if $h$ is a proper divisor of $k$ then $B_h$ is not exceptional.
Assume first that $B_h$ is exceptional of type (EC2). Then $\frac{n}{k}$, which is the degree of $B_h$, is $2$ or $4$ and hence $n$ is a power of 2.
Moreover the centre of $B_h$ contains the unique subfield $F$ of index $\frac{n}{k}$ in $\Q(\zeta_p)$.
Thus either $\frac{n}{k}=2$ and $F=\Q(\zeta_p+\zeta_p\inv)$ or $\frac{n}{k}=4$, $F=\Q$ and $p=5$.
The latter case is in contradiction with the hypothesis and in the former case $F$ is contained in an imaginary quadratic extension of $\Q$ and hence $F=\Q$ and $p=3$, again in contradiction with the hypothesis.
This proves that $B_h$ is not exceptional of type (EC2) and in particular that $\Q G$ has not exceptional components of type (EC2).
Secondly suppose that $B_h$ is exceptional of type (EC1).
Then $B_h$ is a division algebra containing $H_h$ and hence $H_h$ is a non-cyclic Z-group.
Thus $H_h=C_p\rtimes_h C_{\frac{hn}{k}}$  satisfies either (Z)(b) or (Z)(c).
This does not hold if $p\equiv 1 \mod 4$ because in that case $v_q(h)<v_q(k)=v_q(p-1)$ for some prime $q\mid \frac{hn}{k}$.
Similarly, if $\frac{k}{h}$ is divisible by an odd prime $q$ then $v_q(h)<v_q(k)=v_q(p-1)$ and hence $H_h$ is not a $Z$-group.
Thus $p\equiv -1 \mod 4$ and $\frac{k}{h}$ is a power of 2.
If $G$ satisfies (\ref{Zgrupos2})(a) or (\ref{Zgrupos2})(b) then $v_2(k)=1$ and hence $h$ is odd.
Then $H_h$ does not satisfies the conditions of neither (Z)(b) nor (Z)(c), a contradiction.
Thus $G$ satisfies (\ref{Zgrupos2})(c). As $v_2(h)<v_2(k)=v_2(p+1)+1$, $H_h$ only can satisfy the conditions of (Z)(b) or (Z)(c) if $h=2$.
In this case $\frac{hn}{k}=4$, so that $H_h=C_p\rtimes_2 C_4$. Then $B_h=(\Q(\zeta_p)/\Q(\zeta_p+\zeta_p\inv),-1)$, a totally definite quaternion algebra and hence it is not an exceptional component, as desired.

(\ref{Q8Cp})  Let $G=\q_8\times C_p$ with $p$ an odd prime such that $o_p(2)$ is odd.
Then the only non-commutative simple components of $\Q G$ are $\HQ(\Q)$ and $\HQ(\Q(\zeta_p))$. The first one is a totally definite quaternion algebra and the second one is a division algebra by \cite{Moser1973}.
Since $\Q(\zeta_p)$ is not totally real $\HQ(\Q(\zeta_p))$ is exceptional of type (EC1) and $\Q G$ has not exceptional components of type (EC2).
The only non-abelian proper quotient of $G$ is $G/C_p\cong \q_8$ and the only non-abelian simple component of $\Q \q_8$ is $\HQ(\Q)$. This proves that $G$ is CSP'-critical.

To prove that if $G$ is one of the groups in items (\ref{SL3})-(\ref{G1920}) of Theorem~\ref{Main} has an exceptional component of type (EC2) we will simply calculate the Wedderburn decomposition of their rational group algebras and will observe that all of them have one component of isomorphic to one of the following algebras:
	\begin{equation}\label{Excepcionales}
	 \matriz{{c} M_2(\Q), M_2(\Q(\zeta_4)), M_2(\Q(\zeta_3)), M_2(\Q(\sqrt{-2})), \\ M_2(\HQ(\Q)),M_2\quat{-1,-3}{\Q}, (\Q(\zeta_5)/\Q,-1).}
	\end{equation}
The only one which is not obviously exceptional of type (EC2) is $A=(\Q(\zeta_5)/\Q,-1)$.
Clearly $A$ is not a division algebra, because its exponent is the order of $-1$ modulo the image of the norm of $\Q(\zeta_5)$ over $\Q$ \cite[Corollary~30.7]{Reiner75}.
So it is enough to prove that $\R \otimes_{\Q} A$ is not split.

Observe that $A=\Q(G,\GEN{a})=\Q Ge(G,\GEN{a},1)$ with $G=\GEN{a}_5\rtimes_2 \GEN{b}_8$.
The only irreducible character of $G$ not vanishing in $e(G,\GEN{a},1)$ is $\chi=\lambda_{\GEN{a},1}^G$ and it is
 given by the second and third row of the following table.
	$$\matriz{{cccccccccc}
	1 & 5 & 5 & 1 & 4 & 5 & 5 & 5 & 4 & 5 \\
	1 & b &  b^2 &  b^4 &  a &  b^3 &  b^5 &  b^6 &  ab^4 &  b^7 \\
	4 & 0 & 0 & -4 & -1 & 0 & 0 & 0 & -1 & 0
	}$$
The first row gives the cardinality of the corresponding conjugacy class.
Having in mind that $\chi(a^2)=\chi(a)$, because $a$ and $a^2$ are conjugate in $G$, we can calculate the Frobenius-Schur indicator of $\chi$ which is
	\begin{eqnarray*}
	 \frac{1}{|G|}\sum_{g\in G} \chi(g^2) &=&
	\frac{1}{20}(\chi(1) + 5(\chi(b^2) +  \chi(b^4) + \chi(b^6))+  4\chi(a^2))=-1.
	\end{eqnarray*}
Therefore, $\R\otimes_{\Q} A$ is not split as desired. Thus $A=M_2(D)$ with $D$ a totally definite quaternion algebra over $\Q$.
(It can be proved that $D\cong\quat{-2,-5}{\Q}$, but this is not relevant for us.)


To compute the Wedderburn component of the groups algebras of the groups in items (\ref{SL3})-(\ref{G1920}) we use the Wedderga package \cite{Wedderga} and obtain the following decompositions:
	\begin{eqnarray*}
	\Q\; \SL(2,3) &=& \Q \oplus \Q(\zeta_3) \oplus M_3(\Q) \oplus \HQ(\Q) \oplus M_2(\Q(\zeta_3)).\\
	\Q\; \SL(2,5) &=&
  \Q  \oplus M_4(\Q) \oplus M_5(\Q) \oplus M_3(\Q(\sqrt{5})) \oplus M_3(\HQ( \Q)) \oplus \\
                && (\Q(\zeta_5)/\Q(\sqrt{5}),-1)\oplus M_2 \quat{-1,-3}{\Q}. \\
	\Q \D_6 & = & 2\Q \oplus M_2(\Q). \\
	\Q \D_8 & = & 4\Q \oplus M_2(\Q). \\
	\Q \D_{16}^+ &=& 4\Q \oplus  2\Q(\zeta_4)\oplus M_2(\Q(\zeta_4)). \\
	\Q (\q_8\rtimes C_2) &=& 8\Q \oplus  M_2(\Q(\zeta_4)). \\
	\Q (\q_8 \times C_3) &=& 4\Q \oplus  4\Q(\zeta_3)\oplus \HQ(\Q)\oplus M_2(\Q(\zeta_3)). \\
	\Q (\q_8\Ydown_2 \D_8) &=& 16\Q \oplus  M_2(\HQ(\Q)). \\
	\Q (C_5 \rtimes_2 C_8) &=& 2 \Q \oplus \Q(\zeta_4) \oplus \Q(\zeta_8) \oplus M_4(\Q) \oplus (\Q(\zeta_5)/\Q,-1). \\
	\Q (C_3^2 \rtimes_2 C_8) &=& 2\Q \oplus \Q(\zeta_4)\oplus \Q(\zeta_8)\oplus 2 M_4(\Q)\oplus 2M_2\quat{-1,-3}{\Q}. \\
	\Q (\SL(2,3)\Ydown_2 \D_8) &=& 4\Q \oplus 4\Q(\zeta_3)\oplus 4 M_3(\Q)\oplus
                \\ && M_2\left(\left(\Q(\zeta_{12})/\Q(\zeta_3),-1\right)\right)\oplus M_2(\HQ(\Q)). \\
	\Q \mathcal{C} &=& 2\Q \oplus 2\Q(\zeta_3)\oplus 2 M_3(\Q)\oplus 2 M_4(\Q)\oplus 2 M_6(\Q)\oplus 2 M_4(\Q(\zeta_3))\\
&& \oplus M_6(\HQ(\Q))\oplus M_2((\Q(\zeta_{12})/\Q(\zeta_3),-1)) \oplus M_2(\HQ(\Q)). \\
	\Q(\SL(2,3)\Ydown_2 C_4) &=& 2\Q \oplus 2\Q(\zeta_3)\oplus 2 M_3(\Q)\oplus M_2(\Q(\zeta_{12}))\oplus M_2(\Q(\zeta_4)). \\
	\Q\; \SL(2,9) &=& \Q \oplus 2 M_5(\Q) \oplus M_9(\Q) \oplus M_{10}(\Q) \oplus M_8(\Q(\sqrt{5})) \oplus
							\\ && M_4\quat{-1,-3}{\Q(\sqrt{5})} \oplus  M_4\quat{-1,-3}{\Q(\sqrt{2})}\oplus 2 M_2\quat{-1,-3}{\Q}. \\
	\Q \A^+ &=& 2\Q\oplus 2 M_4(\Q)\oplus 2 M_5(\Q)\oplus M_6(\Q)\oplus M_4(\Q(\zeta_3))\oplus \\
					&& M_6(\Q(\sqrt{-2}))\oplus (\Q(\zeta_5)/\Q,-1).\\
\Q \A^- &=& 2\Q\oplus 2 M_4(\Q)\oplus 2 M_5(\Q)\oplus M_6(\Q) \oplus (\Q(\zeta_5,\sqrt{3})/\Q(\sqrt{3}),-1) \\
					&&\oplus  M_3(\Q(\sqrt{2})\otimes_{\Q} D)) \oplus (\Q(\zeta_5)/\Q,-1), \\
					&& \text{where } (\Q(\zeta_5)/\Q,-1)=M_2(D). \\
	\Q \B_1 &=& \Q \oplus \Q(\zeta_5)\oplus 3 M_5(\Q)\oplus M_2((\Q(\zeta_{20})/\Q(\zeta_5),-1))\oplus M_2(\HQ(\Q)). \\
	\Q \B_2 &=& 2\Q \oplus  6 M_5(\Q)\oplus  M_2(\Q(\sqrt{5}))\oplus M_4(\HQ(\Q(\sqrt{5})))\oplus 2 M_2(\HQ(\Q)).\\
\Q\B&=&\Q\oplus  2M_5(\Q)\oplus 2M_{10}(\Q)\oplus M_{15}(\Q)\oplus M_5(\Q(\zeta_3))\oplus M_{20}(\Q)\oplus \\
	&&M_4(\Q)  \oplus M_3(\Q(\sqrt{5}))\oplus M_8(\HQ(\Q))\oplus M_{10}(\HQ(\Q))\oplus \\
	&& M_6(\HQ(\Q(\sqrt{5})))\oplus M_2(\HQ(\Q)).
	\end{eqnarray*}

As we anticipated above each decomposition has one component of one of the types in (\ref{Excepcionales}).
Moreover no simple component is exceptional of type (EC1).

To complete the proof it remains to prove that if $G$ is one of the groups in items (\ref{SL3})-(\ref{G1920}) of Theorem~\ref{Main} and $H$ is a proper epimorphic image of $G$ then $\Q H$ has not exceptional components.
Some of these groups $G$ have not any abelian proper quotient and hence there is nothing to prove for them.
The remaining groups $G$ and their proper quotients $H$ are listed in Table~\ref{NAPQ} (second and third columns respectively).
\begin{table}[h]
$$\matriz{{cll}
\# & G & H \\\hline

(\ref{SL3}) & \SL(2,3) & \PSL(2,3) \cong A_4 \\

(\ref{SL5}) & \SL(2,5) & \PSL(2,5) \cong A_5 \\

(\ref{G24}) & \q_8\times C_3 & \q_8 \\

(\ref{C5C8}) & C_5 \rtimes_2 C_8 & C_5\rtimes C_4 \\

(\ref{G72}) & (C_3\times C_3)\rtimes_2 C_8, &  (C_3\times C_3)\rtimes C_4 \\

(\ref{G96}) & \SL(2,3)\Ydown_2 \D_8 & A_4, C_2\times A_4\\

(\ref{G384}) & \mathcal{C} & A_4, C_2\times A_4, \mathcal{C}/Z(\mathcal{C}), \mathcal{C}/Z(\q_8\times \q_8)\\

(\ref{G48}) & \SL(2,3)\Ydown_2 C_4 & A_4, C_2\times A_4 \\

(\ref{SL9}) & \SL(2,9) & \PSL(2,9) \\

(\ref{G240-1}) &\A^+ & S_5 \\
(\ref{G240-2}) &\A^- & S_5 \\

(\ref{G160}) & \B_1 &  \B_1/Z(\B_1) \\
 (\ref{G320}) & \B_2 & \D_{10}, \B_2/Z(\B_2) \\

(\ref{G1920}) & \B & A_5, \B/Z(\B)=C_2^4\rtimes A_5
}$$
\caption{\label{NAPQ}
The non-abelian proper quotients of some of the CSP'-groups of types (\ref{SL3})-(\ref{G1920})}.
\end{table}
We have to check that $\Q H$ has not exceptional components for each $H$ in the third column of Table~\ref{NAPQ}.
We do not need to consider the groups $H$ which are a proper quotient of another group $K$ in the third column because the simple components of $\Q H$ are also simple components of $\Q K$. This excludes $A_4,A_5, \mathcal{C}/Z(\q_8\times \q_8)$ and $\D_{10}$. Moreover, $\Q (C_2\times H)\cong \Q C_2 \otimes_{\Q} \Q H = 2\Q H$ and hence do not need to consider the groups of the form $C_2\times H$.
Hence we have only to verify that the following Wedderburn decomposition have not exceptional components:
    \begin{eqnarray*}
    \Q \q_8 &=& 4\Q \oplus \HQ(\Q). \\
    \Q (C_5\rtimes C_4) &=& 2\Q \oplus \Q(\zeta_4) \oplus M_4(\Q). \\
    \Q ((C_3\times C_3)\rtimes C_4) &=&2\Q \oplus \Q(\zeta_4)\oplus 2 M_4(\Q).\\
    \Q (\mathcal{C}/Z(\mathcal{C})) &=& 2\Q \oplus 2\Q(\zeta_3)\oplus 2 M_3(\Q)\oplus 2 M_4(\Q)\oplus 2 M_6(\Q)\oplus 2 M_4(\Q(\zeta_3)). \\
    \Q \PSL(2,9) &=& \Q \oplus 2 M_5(\Q) \oplus M_9(\Q) \oplus M_{10}(\Q) \oplus M_8(\Q(\sqrt{5})). \\
    \Q S_5&=&2\Q\oplus 2 M_4(\Q)\oplus 2 M_5(\Q)\oplus M_6(\Q). \\
    \Q (\B_1/Z(\B_1) &=& \Q \oplus \Q(\zeta_5)\oplus 3 M_5(\Q). \\
    \Q (\B_2/Z(\B_2) &=& 2\Q \oplus  6 M_5(\Q)\oplus  M_2(\Q(\sqrt{5})).\\
    \Q (\B/Z(\B))&=&\Q\oplus  2M_5(\Q)\oplus 2M_{10}(\Q)\oplus M_{15}(\Q)\oplus M_5(\Q(\zeta_3))\oplus \\ && M_{20}(\Q)\oplus M_4(\Q)  \oplus M_3(\Q(\sqrt{5}))
    \end{eqnarray*}
This finishes the proof of the proposition.
\end{proof}

\section{Necessity}

In this section we prove the necessity part of Theorem~\ref{Main}.
More precisely, throughout we assume that $G$ is a finite CSP'-critical group and we prove that $G$ is one of the groups listed in Theorem~\ref{Main}.
Since we have already proved that the groups in the theorem are CSP'-critical and, in particular, they have an exceptional component, we have

\medskip
(P1) No proper quotient of $G$ is isomorphic to one of the groups in Theorem~\ref{Main}.
\medskip

As $G$ is CSP'-critical, $\Q G$ has at least one exceptional component. Let $A$ be any exceptional component of $\Q G$ and let $f:G\rightarrow \U(A)$ be the group homomorphism induced by the projection $\Q G\rightarrow A$. Then $A$ is a simple exceptional component of $\Q (G/ \ker f)$ and hence $f$ is injective. Thus

\medskip
(P2) $G$ is embedded in one (all) exceptional component(s) of $\Q G$.
\medskip

\begin{definition}
Let $G$ be a finite subgroup of $\GL_2(D)$ where $D$ is a division algebra of characteristic zero.
One says that $G$ is primitive in $M_2(D)$ if there is no non-singular $2\times 2$ matrix $A$ over $D$ such that $AgA^{-1}$ is monomial (i.e. every column and row has exactly one non-zero entry) for all $g\in G$.
Otherwise one says that $G$ is imprimitive.
\end{definition}

We consider separately the following cases:
\begin{itemize}
\item  $G$ is a subgroup of a division ring (Proposition~\ref{CSPDivision}).
\item $G$ is not a subgroup of a division ring and $G$ is metabelian (Proposition~\ref{CSPMetabelian}).
\item $G$ is not a subgroup of a division ring, $G$ is not metabelian and $G$ is an imprimitive subgroup of an exceptional component of $\Q G$ (necessarily of type (EC2)) (Proposition~\ref{CSPImprimitive}).
\item $G$ does not satisfy any of the previous conditions (Proposition~\ref{CSPPrimitive}). In particular, $G$ is not metabelian and it is a primitive subgroup of an exceptional component of type (EC2).
\end{itemize}

The following proposition proves Theorem~\ref{Main} for the case when $G$ is a subgroup of a division ring.

\begin{proposition}\label{CSPDivision}
Let $G$ be a CSP'-critical group which can be embedded in a division ring.
Then $G$ is isomorphic to one of the groups in items (\ref{Zgrupos1})-(\ref{SL5}) of Theorem~\ref{Main}.
%
%
\end{proposition}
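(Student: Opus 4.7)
The plan is to run through Amitsur's classification (Theorem~\ref{SubgruposAD}) and, for each possible isomorphism type of finite subgroup of a division ring, determine when the group can be CSP'-critical. The cyclic case (Z)(a) is discarded at once since $\Q G$ is commutative. For the remaining types I would proceed in the order Z(b), Z(c), NZ(a), NZ(b), NZ(c), NZ(d), NZ(e), using at every step the two properties already isolated: $G$ embeds in each of its exceptional components (P2), and no proper quotient of $G$ is isomorphic to one of the groups in Theorem~\ref{Main} (P1).

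For the Z-group cases I would first replace $G=C_m\rtimes_k C_n$ by the strong-Shoda-pair description of its noncommutative components, generalising Lemma~\ref{SemidirectoComponenteUnica} to the case of composite $m$: for each divisor $d$ of $m$ with $d$ supporting a Z-structure one gets a component isomorphic to $\Q(H_d,A_d)$ for $H_d=G/C$, where $C$ is the kernel of the action restricted to the prime-to-$d$ part of $\GEN{a}$. The idea is that if $m$ has two prime divisors both contributing to the non-commutativity, then quotienting by one of them yields a proper quotient which is still a non-cyclic Z-group and whose rational algebra still has an exceptional component (in fact of type (EC1)); this contradicts (P1). This reduction forces $m=p$ or $m=pq$ with $q$ acting trivially, which, combined with the arithmetic conditions of Theorem~\ref{SubgruposAD}(Z)(c)(i)--(iii), pins down the two families (\ref{Zgrupos1}) and (\ref{Zgrupos2}). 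The case Z(b), namely $C_m\rtimes_2 C_4$ with $m$ odd, is subsumed into (\ref{Zgrupos1}) after the same reduction: if $m$ has a third prime factor, a proper quotient in the list of Theorem~\ref{Main} appears and (P1) is violated.

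For the NZ cases the calculations are direct and short. The binary octahedral group $\bo^*$ has a quotient isomorphic to $\SL(2,3)$, so (P1) together with item~(\ref{SL3}) excludes it. For a generalised quaternion $\q_m$ with $v_2(m)\ge 3$, every noncommutative simple component of $\Q\q_m$ is a totally definite quaternion algebra $\HQ(\Q(\zeta_{2^k}+\zeta_{2^k}\inv))$, so no exceptional component exists and (P2) fails. For $G=\q_8\times M$ of type NZ(c) the noncommutative components are $\HQ(\Q)$ together with algebras $\HQ(\Q(\zeta_d))$ for divisors $d\mid |M|$ of $M$; Moser's theorem makes $\HQ(\Q(\zeta_d))$ a division algebra and hence exceptional of type (EC1) iff $\Q(\zeta_d)$ is not totally real, i.e.\ iff $d>2$. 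Property (P1) then forces $M=C_p$ with $p$ an odd prime and $o_p(2)$ odd, giving exactly item~(\ref{Q8Cp}). For $\SL(2,3)\times M$ of type NZ(d), applying (P1) to the surjection onto $\SL(2,3)$ (already in the list by item~(\ref{SL3})) forces $M$ to be trivial, leaving item~(\ref{SL3}); finally $\SL(2,5)$ directly yields item~(\ref{SL5}).

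The main obstacle is the Z(c) analysis: Amitsur's conditions (Z)(c)(i)--(iii) are intricate, and one has to keep track simultaneously of which of the components $\Q(H_d,A_d)$ is exceptional and which survive in a given quotient $G/N$. The most delicate point is isolating the quaternion case $n/k=2$, which produces family (\ref{Zgrupos1}), versus the genuinely cyclic case of degree $n/k\ge 2$ producing family (\ref{Zgrupos2}); here the parity conditions on $v_2(n)$ and the exceptional subcase $v_2(n)=v_2(p+1)+2$ emerge from the requirement that the centre of $\Q(H_d,A_d)$ avoid imaginary quadratic or rational splitting behaviour that would turn the component into a totally definite quaternion algebra (non-exceptional) or into one already captured by a proper quotient. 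Once this arithmetic bookkeeping is done, the surviving groups match items~(\ref{Zgrupos1})--(\ref{SL5}) and the proposition follows.
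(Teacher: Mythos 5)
Your overall strategy (running through Amitsur's classification, using (P1) and (P2)) is the same as the paper's, and several of your case dispatches are right; but three of your specific claims are wrong or gapped in ways that would break the argument.

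First, the binary octahedral group $\bo^*$ does \emph{not} have $\SL(2,3)$ as a quotient. Its unique involution generates the center and $\bo^*/Z(\bo^*)\cong S_4$, so its proper non-trivial quotients are $C_2$, $S_3\cong\D_6$ and $S_4$. The paper excludes $\bo^*$ by exhibiting the quotient $\D_6$ (item~(\ref{D6})), not $\SL(2,3)$. Second, it is false that every non-commutative simple component of $\Q\q_m$ for $v_2(m)\ge3$ is a totally definite quaternion algebra: $\q_{16}/Z(\q_{16})\cong\D_8$, so $M_2(\Q)$ is a component of $\Q\q_{16}$, and more generally the components are of the form $M_2(\Q(\zeta_d+\zeta_d^{-1}))$ for $d\mid m/4$ as well as $\HQ(\Q(\zeta_d+\zeta_d^{-1}))$ for the remaining $d$. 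So (P2) does not fail in this case; the paper uses (P1) via the quotients $\D_8$ and $\D_6$ to force $v_2(m)=3$ and $3\nmid m$, and only then shows that \emph{these particular} groups have no exceptional component. Third, in case NZ(c) you describe the non-commutative components of $\Q(\q_8\times M)$ as $\HQ(\Q)$ together with $\HQ(\Q(\zeta_d))$, which silently assumes $M$ is cyclic; the paper must first prove $M$ is abelian (by observing that $M$ generates a division-algebra component of $\Q M$ whose odd degree must be $\le 2$ by (P1), hence equals $1$) before this description applies, and then uses (P1) with the quotient $\q_8\times C_p$ to pin down $M=C_p$.

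A smaller point: the case Z(b), $C_m\rtimes_2 C_4$ with $C_4$ inverting all of $C_m$, is \emph{not} subsumed into item~(\ref{Zgrupos1}). In item~(\ref{Zgrupos1}) the factor $C_q$ is a genuine direct factor on which $C_4$ acts trivially, which is never the case for a Z(b) group. The correct conclusion is that Z(b) groups contribute nothing: after (P1) forces $3\nmid m$, every non-commutative component of $\Q G$ is either a totally definite quaternion algebra or $M_2(\Q(\zeta_d+\zeta_d^{-1}))$ with $d$ divisible by a prime $>3$, so none is exceptional and (P2) fails. Finally, your Z(c) discussion is more of a roadmap than a proof; the reduction to $G=C_{m_0}\times(C_p\rtimes_k C_n)$, the proof that the $p$-part is elementary (i.e.\ $\alpha=1$), and the case-splitting on $p\bmod 4$ leading to the three subcases of item~(\ref{Zgrupos2}) each require nontrivial verification that a quotient $\Q(\overline G,\overline A)$ either has an exceptional component (contradicting (P1)) or is a totally definite quaternion algebra.
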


\begin{proof}
By the hypothesis $G$ is one of the groups in Theorem~\ref{SubgruposAD}. Assume first that $G$ is not a Z-group, that is $G$ satisfies one of the conditions (a)-(e) of (NZ).
First, $G$ is not of type (NZ)(a) because $\bo^{*}=\GEN{s,t|(st)^2=s^3=t^4}$ and $\bo^*/\GEN{t^2,(s,t^2)}\cong \D_6$, in contradiction with (P1).
If $G$ is of type (NZ)(e) then $G\cong \SL(2,5)$, that is $G$ is as in item (\ref{SL5}) of Theorem~\ref{Main}.
By (P1), if $G$ is of type (NZ)(d) then $G\cong \SL(2,3)$, thus $G$ is as in item (\ref{SL3}) of Theorem~\ref{Main}.

We now prove that $G$ is not of type (NZ)(b).
Suppose that $G=\q_{m}=\GEN{j}_{\frac{m}{2}}\nsp_2 C_4$ with $t=v_2(m)\ge 3$. 
If $t\ge 3$ then $G/\GEN{j^4}\cong \D_8$ in contradiction with (P1).
If $3\mid m$ then $G/\GEN{j^3}\cong \D_6$ in contradiction with (P1).
Hence $t=3$ and $3\nmid m$.
Moreover $A=\GEN{j}$ is a maximal abelian subgroup of index 2 in $G$ and $G'=\GEN{j^2}$.
Thus, by Theorem~\ref{SSPmetabelian}, the non-commutative simple components of $\Q G$ are of the form $\Q G e(G,A,\GEN{j^d})$ with $d\mid \frac{m}{2}$ and $d\ne 1,2$. Then
	$$\Q G e(G,A,\GEN{j^d})\cong \left\{\matriz{{ll}
	(\Q(\zeta_d)/\Q(\zeta_d+\zeta_d\inv),1) \cong M_2(\Q(\zeta_d+\zeta_d\inv)), & \text{if } d\mid \frac{m}{4}; \\
	(\Q(\zeta_d)/\Q(\zeta_d+\zeta_d\inv),-1) \cong \HQ(\Q(\zeta_d+\zeta_d\inv)), & \text{otherwise}.}\right.$$
None of this algebras is exceptional (the former because $d\ne 1,2,3,4$ or 6 and the latter because it is a totally definite quaternion algebra).
This yields a contradiction.

Assume that $G$ is of type (NZ)(c), i.e. $G=\q_8\times M$, with $M$ a Z-group of odd order $m>1$ and $o_m(2)$ is odd.
We claim that $m$ is prime.
As $G$ is a subgroup of a division algebra $D$, the subalgebra $D_1$ of $D$ generated by $M$ is a division algebra which appears in the Wedderburn decomposition of $\Q M$. As $M$ is a proper quotient of $G$, $D_1$ is not an exceptional component and hence the degree of $D_1$ is at most 2. However, since the order of $M$ is odd the degree of $D_1$ is odd. Thus $M$ is abelian. Then $M=C_n\times N$ for some subgroup $N$ and some integer $n>1$.
If $p$ is a divisor of $n$ then $M$ has a subgroup $K$ of index $p$ and $o_p(2)$ is odd. Then $G/K\cong \q_8\times C_p$. By (P1) we deduce that $K=1$ and hence $m=p$, as desired.
We conclude that $G=\q_8\times C_p$ with $p$ an odd prime such that $o_p(2)$ is odd. Thus $G$ is the group of item (\ref{Q8Cp}) of Theorem~\ref{Main}.
This finishes the case when $G$ is not a Z-group.

Suppose now that $G$ is a CSP'-critical Z-group.
Hence $G$ is of type (Z)(b) or (Z)(c). We first prove that $G$ is not of type (Z)(b).
Otherwise $G=\GEN{a}_m\rtimes_2 \GEN{b}_4$ with $m$ odd and $b$ acting by inversion.
If $3\mid m$ then $G/\GEN{a^3,b^2}\cong \D_6$, in contradiction with (P1). Thus $3\nmid m$.
Moreover, $A=\GEN{a,b^2}$ is a maximal abelian subgroup of $G$ of index 2 and $G'=\GEN{a}$.
Using Theorem~\ref{SSPmetabelian}, we have that every non-commutative simple component of $\Q G$ is of the form $\Q G e(G,A,B)$ with $B$ a subgroup of $A$ not containing $a$. Fix such $B$ and assume that $k=[A:B]$.
By (\ref{SSPComponent}), $\Q G \; e(G,A,B)=(\Q(\zeta_k)/\Q(\zeta_k+\zeta_k\inv),\epsilon)$ with $\epsilon=1$ if $b^2\in B$ and $\epsilon=-1$ if $b^2\not\in B$. In the second case $\Q G\; e(G,A,B)$ is a totally definite quaternion algebra.
In the first case $\Q G \; e(G,A,B)=M_2(\Q(\zeta_k+\zeta_k\inv))$ which is not exceptional because $k$ is divisible by a prime $p>3$.
Therefore $\Q G$ has not any exceptional component, contradicting the hypothesis.

It remains to consider type (Z)(c).
So suppose that $G=C_m\rtimes_k C_n$ satisfies the hypothesis of (Z)(c) and recall that the Sylow $p$-subgroup of $C_m$ is denoted $P_p$ and the Sylow $p$-subgroup of $C_n$ is denoted $Q_p$.
Let $p$ be a prime divisor of $m$ such that $C_n$ does not act trivially on $P_p$.
Let $q_1,\dots,q_h$ be the prime divisors $q$ of $n$ such that $Q_q$ acts non-trivially on $P_p$ and recall that $R_p=Q_{q_1}\cdots Q_{q_h}$.
Let $k_p$ be the order of the kernel of the action of $R_p$ on $P_p$.
Then $P_p\rtimes_{k_p} R_p$ is a direct factor of $G$ and a subgroup of a division algebra of degree $\frac{|R_p|}{k_p}$ (see Lemma~\ref{DivisionAlgebraZGrupo}).
This division algebra is an exceptional component unless $\frac{|R_p|}{k_p}=2$.
Therefore, since $G$ is CSP'-critical, if $G\ne P_p\rtimes_{k_p} R_p$ then $\frac{|R_p|}{k_p}=2$.
This only can happen for one prime $p$, because $Q_2$ acts non-trivially on at most one Sylow $p$-subgroup of $C_m$.
This shows that $C_m=C_{m_0}\times P_p$ and $G=C_{m_0}\times (P_p\rtimes_k C_n)$ with $k=k_p$, $C_n=R_p=\GEN{b}$ and $P_p=\GEN{a}_{p^{\alpha}}$.
Then $A=C_{m_0}\times \GEN{a,b^{n_p/k_p}}$ is a cyclic normal subgroup and a maximal abelian subgroup of $G$.
Hence $(A,1)$ is a strong Shoda pair of $G$ and hence $\Q (G,A,1)$ is a simple component of $\Q G$ isomorphic to $\Q (G,A)$.
Moreover, by Lemma~\ref{DivisionAlgebraZGrupo}, $\Q (G,A)$ is a division algebra of degree $\frac{n}{k}$.

We claim that $\alpha=1$. Otherwise $\overline{G}=G/\GEN{a^{p^{\alpha-1}}}$ satisfies the conditions of (Z)(c) and therefore
$\Q(\overline{G},\overline{A})$ is a division ring of degree $\frac{n}{k}$ which is a component of $\Q\overline{G}$.
By assumption, $\Q(\overline{G},\overline{A})$ has to be a totally definite quaternion algebra.
Thus $n=2k$, and the action of $\GEN{b}$ on $\GEN{a}$ is the only one of order 2, i.e. $a^b=a\inv$.
Moreover, as the centre of $\Q (\overline{G},\overline{A})$ contains $\Q(\zeta_{m_0k})$ and $m_0$ is odd, necessarily $m_0=1$ and $k\le 2$.
However, the hypothesis of (Z)(c) implies that $k\ne 1$ and thus $k=2$.
Then $G$ is as in case (Z)(b) which we have already excluded. This concludes the proof of the claim.

Suppose that $m_0\ne 1$.
Then $\GEN{a}_p\rtimes_{k} \GEN{b}_n$ is a non-commutative proper quotient of $G$ and its rational group algebra has a simple component of degree $\frac{n}{k}$.
The argument of the previous paragraph shows that $n=4$, $k=2$ and $a^b=a\inv$.
A similar argument, now factoring by a subgroup of $C_{m_0}$, shows that $m_0$ is prime.
Applying the conditions in (Z)(c) for $q=2$ we have $v_2(o_{m_0}(p))<o_2(p)=1$ or equivalently $o_{m_0}(p)$ is odd.
If $p=3$ then $\D_6$ is a proper epimorphic image of $G$ in contradiction with (P1).
Moreover, if $p\equiv 1 \mod 4$ then $1=v_2(k)\ge v_2(p-1)\ge 2$, another contradiction. Thus $3\ne  p\equiv -1 \mod 4$.
We have proved that $G$ satisfy the conditions of item (\ref{Zgrupos1}) in Theorem~\ref{Main} (with $q=m_0$).

Finally, suppose that $m_0=1$. Then $G=\GEN{a}_p\rtimes_k \GEN{b}_n$, $A=\GEN{a,b^{\frac{n}{k}}}$ and every Sylow subgroup of $\GEN{b}$ acts non-trivially on $\GEN{a}$.
Hence, if $q$ is a prime divisor of $n$ then $1 \le v_q\left(\frac{n}{k}\right) \le v_q(p-1) \le v_q(k)$.
In particular, $v_q(n)=2$ and hence either $n=4$ and $G=C_p\rtimes_2 C_4$ or $n\ge 8$. However in the first case $G$ satisfies (Z)(b) which was excluded before. Thus $n\ge 8$.
Suppose  that $p\equiv 1 \mod 4$. Then, by the hypothesis on (Z)(c), $v_q(p-1)\le v_q(k)$ for every prime divisor $q$ of $n$.
Therefore $\gcd(n,p-1)$ divides $k$.
By means of contradiction, suppose that $\gcd(n,p-1)\ne k$.
Then $v_q(p-1)<v_q(k)$ for some prime divisor $q$ of $n$.
Let $C$ be the subgroup of order $q$ of $\GEN{b}$ and $\overline{G}=G/C$.
Then $1\ne C\subseteq Z(G)$, $\overline{G}=C_p\rtimes_{\frac{k}{q}} C_{\frac{k}{q}}$, $(\overline{A},1)$ is a strong Shoda pair of $\overline{G}$ and $\Q(\overline{G},\overline{A})=\Q(G,A)$ is a simple component of $\Q \overline{G}$.
Then $\overline{G}$ also satisfies the assumptions of (Z)(c), since $v_q(p-1)\le v_q\left(\frac{k}{q}\right)$.
By Lemma~\ref{DivisionAlgebraZGrupo}, $\Q(G,A)$ is a non-commutative division algebra of degree $\frac{n}{k}$.
Therefore $n=2k$, the centre of this algebra is a totally real field containing $\Q(\zeta_k)$ and $k>2$ because $v_2(k)\ge v_2(p-1)\ge 2$.
This yields a contradiction.
Hence $k=\gcd(n,p-1)$.
If $p=5$ then $\frac{n}{k}$ divides $4$ and hence $n$ is a power of $2$. Then $k=4$ and $n$ is either $8$ or $16$.
If $n=16$ then $G$ has an epimorphic image isomorphic to $C_5\rtimes_2 C_8$ in contradiction with (P1).
Thus $n=8$ and $G$ satisfies the conditions of (\ref{Zgrupos2})(a) in Theorem~\ref{Main}.

Assume that $p\equiv -1 \mod 4$. Since $\frac{n}{k}$ divides $p-1$, if $n$ is even then $v_2(k)=v_2(n)-1$.
By the hypothesis of (Z)(c), $n\ne 2$ and, as $G$ does not satisfy (Z)(b), $n\ne 4$.
Moreover, $v_q(p-1)\le v_q(k)$ for every odd prime divisor $q$ of $n$ and if $2$ divides $n$ then either $v_2(k)=1$ or $v_2(k)>v_2(p+1)$.
If $v_q(p-1)<v_q(k)$ for some odd prime divisor of $n$ or $v_2(p+1)+1<v_2(k)$ then the argument of the previous paragraph yields a contradiction.
Thus $v_q(p-1)=v_q(k)$ if $q$ is an odd prime divisor of $n$ and if $n$ is even then $v_2(k)$ is either $1$ or $v_2(p+1)+1$.
If $n$ is odd then $k=\gcd(p-1,n)$ and $G$ satisfies the conditions of (\ref{Zgrupos2})(a) in Theorem~\ref{Main}.
Assume otherwise that $n=2^vn_1$, with $v\ge 1$ and $2\nmid n_1$. Then either $k=2\gcd(p-1,n_1)$ or $k=2^{v-1}\gcd(p-1,n_1)$ and $v=v_2(p+1)+2$.
In the second case $v\ge 4$ and $\overline{G}=G/\GEN{b^{2^{v-2}}}=C_p\rtimes_{2\gcd(p-1,n_1)} C_{4\gcd(p-1,n_1)}$ satisfies the conditions of (Z)(c).
Hence $\Q (\overline{G},\overline{A})$ is a division algebra of degree $2\gcd(p-1,n_1)$ in the Wedderburn decomposition of $\Q \overline{G}$.
By hypothesis $\Q (\overline{G},\overline{A})$ is a totally definite quaternion algebra. Then $\gcd(p-1,n_1)=1$ and $n_1\mid m_0=1$.
Hence either $k=2\gcd(p-1,n_1)$ with $n_1\ne 1$ or $n=2^{v_2(p+1)+2}$ or $n=4$. However the last case has been excluded above. The two remaining cases correspond to items (\ref{Zgrupos2})(b) and (\ref{Zgrupos2})(c) of Theorem~\ref{Main}.
\end{proof}

In order to prove Theorem~\ref{Main} for the metabelian groups not covered by Proposition~\ref{CSPDivision} we start describing the SSP exceptional components of type (EC2).

\begin{proposition}\label{componentesdessp}
Let $(H,K)$ be a strong Shoda pair of $G$ such that $\lambda_{H,K}^{G}$ is faithful and let $A=\Q G e(G,H,K)$ and $N=N_G(K)$.
If $A$ is an exceptional component of type (EC2) then one of the following conditions holds:
\begin{enumerate}
\item \label{Mat2Q}  $A\cong M_2(\Q)$ and $G\cong\D_m$ with $m=6,8$ or $12$.
\item \label{Mat2Q-1} $A\cong M_2(\Q(\zeta_4))$ and one of the following conditions holds:
    \begin{enumerate}
    \item Either $G\cong \D_{16}^+$, or $G\cong C_4\times \D_6$, or $G\cong C_3\rtimes_4 C_8$.
    \item $[H:K]=4$, $[G:H]=2$, $N=H$ and $|K|\in \{2,4\}$.
    \end{enumerate}
\item \label{Mat2Q-2} $A=M_2(\Q(\sqrt{-2}))$ and $G\cong \D_{16}^-$.
\item\label{Mat2Q-3} $A\cong M_2(\Q(\zeta_3))$ and one of the following conditions hold:
    \begin{enumerate}
    \item $G\cong C_3\times \D_8$ or $G\cong C_3\times \q_8$.
    \item $[G:H]=2$, $N=H$, $[H:K]=3$ or $6$ and $1\ne |K| \mid [H:K]$.
    \end{enumerate}
\item \label{Mat2-2-5Q} $A\cong (\Q(\zeta_5)/\Q,-1)$ and $G=C_5\rtimes_2 C_8$.
\item \label{Mat2HQQ} $A\cong M_2(\HQ(\Q))$ and one of the following conditions hold:
    \begin{enumerate}
    \item $G=\GEN{i,j}_{\q_{16}}\rtimes \GEN{a}_2$ with $j^a=j^3$ and $i^a=i$.
    \item $[G:N]=2$, $[H:K]=4$, $N/K\cong \q_8$ and $|K|\in \{2,4\}$.
    \end{enumerate}
\item \label{Mat2-1-3Q} $A\cong M_2\quat{-1,-3}{\Q}$, $[G:N]=2$, $[H:K]=6$, $N/K\cong \q_{12}$ and $1\ne |K|\mid [H:K]$.
\end{enumerate}
In particular, $|G|\in \{6,8,12,16,18,24,32,36,40,48,64,72,144\}$.
\end{proposition}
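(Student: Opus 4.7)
The plan is to use the explicit description of $A$ given by equation~(\ref{SSPComponent}): $A\cong M_n(B)$ where $n=[G:N]$ and $B=\Q(N/K,H/K)$ is the classical crossed product $(L/F,\tau)$ with $L=\Q(\zeta_h)$, $h=[H:K]$, and $F$ the fixed subfield of the conjugation action of $N/H$ on $H/K$ (transported to $L$ via the natural isomorphism $\Aut(H/K)\cong\Gal(L/\Q)$). In particular $[L:F]=[N:H]$ and $B$ is a central simple $F$-algebra.

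The first step is a degree count. Writing $B\cong M_r(D')$ for a unique division algebra $D'$, the identification $A\cong M_{nr}(D')\cong M_2(D)$ forces $nr=2$ and $D'\cong D$, so in particular $F=Z(D)$ and the Schur index of $D$ equals $[N:H]/r$. This reduces the problem to the two branches $(n,r)=(1,2)$ and $(n,r)=(2,1)$. The second input is the faithfulness hypothesis, equivalent to $\core_G(K)=1$: in the branch $n=1$ we have $G=N$, hence $K\unlhd G$ and therefore $K=1$, so $(H,1)$ is a strong Shoda pair with $H$ a cyclic maximal abelian subgroup of $G$ and $[G:H]=[N:H]$; in the branch $n=2$ we have $[G:N]=2$ and $K\cap K^g=1$ for any $g\in G\setminus N$, which bounds $|K|$ by a divisor of $[H:K]$.

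Next I would run through each admissible isomorphism type for $D$ imposed by (EC2), i.e.\ $D\in\{\Q,\Q(i),\Q(\sqrt{-2}),\Q(\zeta_3)\}$ or $D$ a totally definite quaternion algebra over a totally real field. For each $D$ the centre $F=Z(D)$ is fixed, and the requirement $F\subseteq L=\Q(\zeta_h)$ with $[L:F]\in\{1,2,4\}$ gives a short list of admissible values of $h$. For that finite list I would identify $N/H$ with the unique subgroup of $\Gal(L/\Q)$ having $F$ as fixed field, then pin down the twist $\tau\in F^\times/N_{L/F}(L^\times)$ from the constraint $(L/F,\tau)\cong D$. From $h$, the action and $\tau$ one recovers $N/K$ as an extension of $N/H$ by the cyclic group $H/K$; in the branch $n=2$ one further composes with the remaining outer order-$2$ action of $G/N$ on $N/K$. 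The distinguished totally definite quaternion algebras $\HQ(\Q)$ and $\quat{-1,-3}{\Q}$ will force $N/K\cong\q_8$ and $N/K\cong\q_{12}$ in items~(\ref{Mat2HQQ})(b) and~(\ref{Mat2-1-3Q}) respectively, while in the split cases the analysis of $\tau$ together with the faithfulness bound on $|K|$ produces the groups listed in items~(\ref{Mat2Q})--(\ref{Mat2Q-3}) and~(\ref{Mat2-2-5Q}).

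The main obstacle will be identifying the precise totally definite quaternion $D$ (among all quaternion $F$-algebras) in the quaternion branch: this requires computing the local invariants of $(L/F,\tau)$, or equivalently using Frobenius-Schur indicators and checking ramification exactly at the finite primes that distinguish $\HQ(\Q)$, $\quat{-1,-3}{\Q}$, and the auxiliary quaternion algebra appearing in item~(\ref{Mat2-2-5Q}) (for which the computation is essentially the one carried out in the proof of Proposition~\ref{Suficiencia}). A secondary book-keeping issue is that in the branch $n=1$ with $h\in\{8,12\}$, different choices of complement of $H$ in $G$ yield different cocycles $\tau$ but potentially the same group $G$ (for instance $C_4\times\D_6$ admits two essentially different presentations), so one must verify that the three groups in item~(\ref{Mat2Q-1})(a) exhaust all possibilities without redundancy. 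Once the case-by-case analysis is complete, the final bound $|G|\in\{6,8,12,16,18,24,32,36,40,48,64,72,144\}$ drops out by computing $|G|=n\cdot[N:H]\cdot[H:K]\cdot|K|$ in each case.
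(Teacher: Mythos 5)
Your plan is essentially the paper's own approach: both start from the SSP component formula $A \cong M_{[G:N]}\bigl(\Q(\zeta_{[H:K]}) *^{\alpha}_{\tau} N/H\bigr)$, extract the degree constraint linking $\varphi([H:K])$, $[N:H]$ and $[F:\Q]$, use $\core_G(K)=\ker\lambda_{H,K}^G = 1$ to force $K=1$ (when $G=N$) or $[G:N]=2$ with $K\cap K^g=1$, and then run a finite case analysis over the admissible centers and twists. The paper organizes the cases by the triple $(K\text{ trivial or not},\,[N:H],\,F=\Q\text{ or not})$, giving six cases (A)--(F), whereas you organize by the isomorphism type of $D$ first; this is a cosmetic reorganization and the underlying bookkeeping is identical.

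One imprecision worth flagging: in the branch $[G:N]=2$ you assert that $K\cap K^g=1$ bounds $|K|$ by a divisor of $[H:K]$. What $K\cap K^g=1$ actually gives (since $K$ and $K^g$ are both normal in $N$, so $KK^g\le N$ with $|KK^g|=|K|^2$) is $|K|\mid [N:K]$, which equals $[H:K]$ only when $N=H$. When $[N:H]=2$ (the cases leading to items~(\ref{Mat2HQQ})(b) and~(\ref{Mat2-1-3Q})), this argument only yields $|K|\mid 2[H:K]$; ruling out the larger value, as the final statement requires, needs an additional argument (e.g.\ that $K^g\le H$ when $H\unlhd G$, or a direct structural constraint from $N/K\cong \q_8$ resp.\ $\q_{12}$), and this extra step is not spelled out in your plan. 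The remaining parts --- pinning down the twist $\tau$ from $(L/F,\tau)\cong D$, distinguishing the quaternion algebras via local invariants or the Frobenius--Schur indicator, and the final order count --- are stated as intentions rather than carried out, so the plan is a correct skeleton but not yet a proof.
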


\begin{proof}
Let $n=[G:N]$, $k=[H:K]$,  and $F=Z(A)$.
By (\ref{SSPComponent}), $A\cong M_n(\Q (\zeta_k)*^{\alpha}_{\tau} N/H)$  and $\alpha$ is faithful. Hence the degree of $A$ is $[G:H]$ and $[\Q(\zeta_k):F]=[N:H]$.
Thus
    \begin{equation}\label{Phik}
    \varphi(k)=[\Q(\zeta_k):\Q]=[N:H][F:\Q].
    \end{equation}

Since $A$ is an exceptional component of type (EC2), either $[G:H]=2$ and $D=\Q$ or $[G:H]=2$ and $D$ is a quadratic imaginary extension of $\Q$, or $[G:H]=4$, $n\le 2$ and $D$ is a totally definite quaternion algebra over $\Q$.
Moreover $\core_G(K)=\ker \lambda_{H,K}^G=1$. So, if $K=1$ then $N=G\ne H$ and otherwise $[G:N]=2$ and $K\cap K^g=1$ for each $g\in G\setminus N$.
In both cases $N\unlhd G$. Thus $K^g\leq N$ and $|K|$ divides $[N:K]$.
We consider separately the following cases:
\begin{itemize}
\item[(A)] $K=1$, $[G:H]=2$ and $F=\Q$. Then $G=N$ and $A=M_2(\Q)$.
\item[(B)] $K=1$, $[G:H]=2$ and $F\neq \Q$. Then $G=N$, $F$ is an imaginary quadratic extension of $\Q$ and $A=M_2(F)$.
\item[(C)] $K=1$ and $[G:H]=4$. Then $G=N$, $F=\Q$ and $A=M_2(D)$ with $D$ a totally definite quaternion algebra over $\Q$.
\item[(D)] $K\ne 1$ and $[N:H]=2$. Then $[G:N]=2$, $F=\Q$ and $A=M_2(D)$ with $D$ a totally definite quaternion algebra over $\Q$.
\item[(E)] $K\ne 1$, $N=H$ and $F=\Q$. Then $[G:H]=2$ and $A=M_2(\Q)$.
\item[(F)] $K\ne 1$, $N=H$ and $F\neq \Q$. Then $[G:H]=2$, $F$ is an imaginary quadratic extension of $\Q$ and $A=M_2(F)$.
\end{itemize}

(A) In this case $H$ is a cyclic subgroup of order $k$ and index 2 in G. Moreover, $\varphi(k)=2$, by (\ref{Phik}) reads $\varphi(k)=2$. Hence either $G=\D_{2k}$ with $k=3,4$ or $6$ or $G=\q_{2k}$ with $k=4$ or $6$.
However the only non-commutative simple component of $\Q \q_8$ is isomorphic to $\HQ(\Q)$ and if $G=\q_{12}$ then $A=\quat{-1,-3}{\Q}$. ($\Q \q_{12}$ has also an SSP simple component $\Q \q_{12} e(\q_{12},\GEN{a},\GEN{a^3})\cong M_2(\Q)$ coming from the strong Shoda pair $(\GEN{a},\GEN{a^3})$
but $\lambda_{\GEN{a},\GEN{a^3}}^{\q_{12}}$ is not faithful.) Thus $G$ satisfies the conditions of (\ref{Mat2Q}).

(B) In this case $(\ref{Phik})$ reads $\varphi(k)=4$. Hence $k=5,10,8$ or $12$. Moreover, $F$ is an imaginary quadratic
extension of $\Q$ contained in $\Q(\zeta_k)$ and this excludes the cases $k=5$ and $10$. Therefore either $k=8$ and $F\subseteq \Q(\zeta_8)$  or $k=12$ and $F\subseteq \Q(\zeta_{12})$.
Let $a$ be a generator of $H$ and let $b\in G\setminus H$.
Then $G=\GEN{a,b}$,  $a^b=a^i$ and $b^2=a^j$ with $i=-1,5$ or $-5$ and $k\mid j(i-1)$.
However, if $i=-1$ then $F=\Q(\zeta_k+\zeta_k\inv)$, a real quadratic extension of $\Q$, contradicting the assumptions.

Assume that $k=8$. If $i=5$ then $F=\Q(\zeta_4)$, $j$ is even and $(a^{\frac{j}{2}}b)^2=1$.
Replacing $b$ by $a^{\frac{j}{2}}b$ one may assume that $b^2=1$. Then $G\cong \D_{16}^+$ and the conditions of (\ref{Mat2Q-1})(a) holds.
If $i=-5$ then $F=\Q(\sqrt{-2})$, $4\mid j$ and $(a^{\frac{j}{4}}b)^2=1$. Again one may assume that $b^2=1$.
Now $G\cong \D_{16}^-$ and the conditions of (\ref{Mat2Q-2}) holds.

Assume that $k=12$. Suppose that $i=5$. Then $F=\Q(\zeta_4)$ and $3\mid j$. If $6\mid j$ then, $(a^{-\frac{j}{6}}b)^2=1$ and we may assume that $b^2=1$. Then $G=C_4\times \D_6$ and $G$ satisfies the conditions of (\ref{Mat2Q-1})(a). Otherwise, we may assume that $b^2=a^3$. Then $G=\GEN{a^4}\rtimes_4 \GEN{b}_8=C_3\rtimes_4 C_8$ and $G$ satisfies the conditions of (\ref{Mat2Q-1})(a).
Suppose that $i=-5$. Then $F=\Q(\zeta_3)$, $2\mid j$ and $(a^{-\frac{j}{2}}b)^2 = a^{3j}$.
By changing $b$ one may assume that $b^2=1$ or $b^2=a^6$. In the first case $G=C_3\times \D_8$ and in the second case $G=C_3\times \q_8$. Thus $G$ satisfies the conditions of (\ref{Mat2Q-3})(a).

%
%
%
%
%
%
%
%

(C) Again in this case $\varphi(k)=4$ and hence $k=5,10,8$ or $12$. Moreover $N/H\cong \Gal(\Q(\zeta_k)/\Q)$.
If $k=5$ then $G=C_5\rtimes C_4$ and $\Q G \cong 2\Q \oplus \Q(i) \oplus M_4(\Q)$, which has not any exceptional component.
Thus $k\ne 5$.
If $k=10$ then $G=C_{10}\rtimes C_4=C_2\times (C_5\rtimes C_4)$ or $G=C_{10}\nsp_2 C_4=C_5\rtimes_2 C_8$.
However $\Q (C_2\times (C_5\rtimes C_4)) = 2 \Q(C_5\rtimes C_4)$ has not any exceptional component.
Then $G=C_5\rtimes_2 C_8$ and the unique exceptional component of $\Q G$ is $(\Q(\zeta_5)/\Q,-1)$. Thus $G$ satisfies the conditions of (\ref{Mat2-2-5Q}).

Assume that $k=8$. Then $G=\GEN{j,i,a}$ with $|j|=8$, $j^i=j\inv$, $j^a=j^3$, $i^2,a^2,(i,a)\in \GEN{j^4}$ and $(xy)^2\in \GEN{a^2}$.
If $a^2=j^4$ then $(aj)^2=1$ and hence we may assume that $a^2=1$. Suppose that $i^2=j^{4x}$ and $(ia)^2=a^{2y}$. Then $(i,a)=i^{-2}(ia)^2=j^{4x+2y}$. Thus $i^a=ij^{4x+2y}=j^{4x-2y}i$. Therefore $(j^{x-2y}i)^a=j^{x-2y}i$.
Thus replacing $i$ by $a^{x-2y}i$ we may assume that $(i,a)=1$.
If $x^2=1$ then $G \cong \D_{16}\rtimes C_2=\GEN{i,j}_{\D_{16}}\rtimes \GEN{a}$ and if $i^2=j^4$ then $G\cong \q_{16}\rtimes C_2=\GEN{i,j}_{\q_{16}}\rtimes \GEN{a}$. In both cases, $|j|=8$, $j^a=j^3$ and $(i,a)=1$.
Using Wedderga we obtain
    \begin{eqnarray*}
    \Q(\D_{16}\rtimes C_2) &=& 8\Q \oplus 2M_2(\Q) \oplus M_4(\Q). \\
    \Q(\q_{16}\rtimes C_2) &=& 8\Q \oplus 2M_2(\Q) \oplus M_2(\HQ(\Q)).
    \end{eqnarray*}
Therefore $G=\q_{16}\rtimes C_2$, $A=M_2(\HQ(\Q))$ and $G$ satisfies (\ref{Mat2HQQ})(a).

(D) In this case, $|K|$ divides $[N:K]=2k$. Thus $|G|$ divides $8k^2$.
Moreover $(\overline{H}=H/K,\overline{K}=1)$ is a strong Shoda pair of $\overline{N}=N/K$ satisfying the conditions of (A) (with $G, H$ and $K$ replaced by $\overline{N},\overline{H}$ and $\overline{K}$ respectively), except that now $\Q G e(\overline{N},\overline{H},\overline{K})$ is a totally definite algebra over $\Q$.
By the calculations in case (A), $\overline{N}=\q_{2k}$ with $k=4$ or $6$.
In the first case $A=M_2(\HQ(\Q))$, so $G$ satisfies the conditions of (\ref{Mat2HQQ})(b).
In the second case $A=M_2\quat{-1,-3}{\Q}$ and $G$ satisfies the conditions of (\ref{Mat2-1-3Q}).

%
%
%
%
%
%
%

(E) In this case (\ref{Phik}) reads $\varphi(k)=1$, and hence $k=2$. Moreover $|K|$ divides $[H:K]=k$ and hence $|K|=2$ and $|G|=8$.
Since $\q_8$ has not any exceptional component, $G\cong \D_8$. Therefore $G$ satisfies (\ref{Mat2Q}).

(F) Arguing as the previous case we deduce that $\varphi(k)=2$. Hence $k=3,4$ or $6$, $F=\Q(\zeta_k)$ and $|K|$ divides $k$.
Hence either $k=4$ and $A=M_2(\Q(\zeta_4))$ or $k=3$ or 6 $A\cong M_2(\Q(\zeta_3))$. In the first case $G$ satisfies (\ref{Mat2Q-1})(b) and in the second case it satisfies (\ref{Mat2Q-3})(b).
\end{proof}

The following proposition classify the metabelian CSP'-critical groups not included in Proposition~\ref{CSPDivision}.

\begin{proposition}\label{CSPMetabelian}
The following conditions are equivalent for a finite group $G$.
\begin{enumerate}
\item $G$ is CSP'-critical, it is not a subgroup of a division ring and $\Q G$ has an exceptional SSP component of type (EC2).
\item $G$ is metabelian and  CSP'-critical,  it is not a subgroup of a division ring and $\Q G$ has an exceptional component of type (EC2).
\item $G$ is one of the groups  in items (\ref{D6})-(\ref{G72}) of  Theorem~\ref{Main}.
\end{enumerate}
\end{proposition}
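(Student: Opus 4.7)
My plan has three short pieces plus one substantial piece. The implications $(3) \Rightarrow (1)$ and $(3) \Rightarrow (2)$ are essentially bookkeeping: each group in items (\ref{D6})-(\ref{G72}) of Theorem~\ref{Main} is visibly metabelian (each has an abelian normal subgroup with abelian quotient), has already been shown in Proposition~\ref{Suficiencia} to be CSP'-critical, and the Wedderburn decompositions displayed there exhibit an exceptional component of type (EC2) — one of $M_2(\Q)$, $M_2(\Q(\zeta_4))$, $M_2(\Q(\zeta_3))$, $M_2(\HQ(\Q))$, $(\Q(\zeta_5)/\Q,-1)$, or $M_2\quat{-1,-3}{\Q}$. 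Since metabelian groups are strongly monomial, every simple component of $\Q G$ is an SSP component, so the exceptional one is SSP. Finally, none of these groups appears in items (\ref{Zgrupos1})-(\ref{SL5}) of Theorem~\ref{Main}, so by Proposition~\ref{CSPDivision} none can be embedded in a division ring.

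The implication $(2) \Rightarrow (1)$ is the same strong-monomiality observation: if $G$ is metabelian, every simple component of $\Q G$ is an SSP component, so the given exceptional component of type (EC2) is automatically SSP.

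The main work is $(1) \Rightarrow (3)$. Assume $G$ is CSP'-critical, not a subgroup of a division ring, and $\Q G$ has an exceptional SSP component $A$ of type (EC2). I will feed this into Proposition~\ref{componentesdessp}, which forces $(G,A)$ into one of seven enumerated cases and in particular restricts $|G|$ to the list $\{6,8,12,16,18,24,32,36,40,48,64,72,144\}$. The task then is to walk through each case and use property (P1) — no proper quotient of $G$ lies in the list of Theorem~\ref{Main} — to eliminate every candidate except the groups of items (\ref{D6})-(\ref{G72}). In the cases where Proposition~\ref{componentesdessp} specifies $G$ up to isomorphism, namely parts (\ref{Mat2Q}), (\ref{Mat2Q-1})(a), (\ref{Mat2Q-2}), (\ref{Mat2Q-3})(a), (\ref{Mat2-2-5Q}) and (\ref{Mat2HQQ})(a), I will check each candidate directly: the survivors are $\D_6, \D_8, \D_{16}^+, \q_8\times C_3$ and $C_5\rtimes_2 C_8$, while the remaining candidates $\D_{12}\cong C_2\times \D_6$, $C_4\times \D_6$, $C_3\rtimes_4 C_8$, $\D_{16}^-$, $C_3\times \D_8$ and $\GEN{i,j}_{\q_{16}}\rtimes\GEN{a}_2$ each admit a proper quotient isomorphic to one of $\D_6, \D_8, \D_{16}^+$ and are ruled out by (P1). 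In the remaining cases (\ref{Mat2Q-1})(b), (\ref{Mat2Q-3})(b), (\ref{Mat2HQQ})(b) and (\ref{Mat2-1-3Q}), Proposition~\ref{componentesdessp} fixes only the indices $[G:H]$, $[H:K]$, $[G:N]$ and the isomorphism type of $N/K$. For each such case I will enumerate the finitely many groups of the allowed order ($\leq 144$) compatible with the prescribed extension shape, either by direct analysis of the tower $K\unlhd H\unlhd N\unlhd G$ or by consulting the GAP small groups library, and then apply (P1). The expected survivors are $\q_8\rtimes C_2$ (item (\ref{G16-2})) from (\ref{Mat2Q-1})(b), $\q_8\Ydown_2\D_8$ (item (\ref{G32})) from (\ref{Mat2HQQ})(b), and $(C_3\times C_3)\rtimes_2 C_8$ (item (\ref{G72})) from (\ref{Mat2-1-3Q}); every other candidate will admit a proper quotient already in the list of Theorem~\ref{Main} and be discarded.

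The principal obstacle is this last enumeration: the constraints in parts (\ref{Mat2Q-1})(b), (\ref{Mat2Q-3})(b), (\ref{Mat2HQQ})(b) and (\ref{Mat2-1-3Q}) leave room for several non-isomorphic groups of the same order with the same prescribed $N/K$, and each must be methodically ruled in or out by computing enough of its quotient structure to match against the full list of Theorem~\ref{Main}. Because the bound $|G|\leq 144$ is in force throughout, this reduces to a finite and explicit check that can be carried out by hand or verified by GAP.
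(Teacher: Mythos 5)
Your proposal takes essentially the same route as the paper: $(3)\Rightarrow(2)$ via Proposition~\ref{Suficiencia} and the displayed Wedderburn decompositions, $(2)\Rightarrow(1)$ via strong monomiality of metabelian groups, and $(1)\Rightarrow(3)$ by feeding the faithful exceptional SSP component into Proposition~\ref{componentesdessp} to bound $|G|\le 144$ and then running the finite enumeration filtered by (P1), exactly as the paper does with its GAP code. The only cosmetic difference is that you obtain non-embeddability into a division ring from the contrapositive of Proposition~\ref{CSPDivision} (together with the observation that none of the groups in items~(\ref{D6})--(\ref{G72}) is isomorphic to one in items~(\ref{Zgrupos1})--(\ref{SL5})), whereas the paper reads it directly off the Wedderburn decompositions; both are valid.
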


\begin{proof}
(3) implies (2). Let $G$ be one of the groups of types (\ref{D6})-(\ref{G72}) from Theorem~\ref{Main}.
Clearly $G$ is metabelian. By Proposition~\ref{Suficiencia}, $G$ is CSP'-critical.
Moreover, in all the cases $\Q G$ have an exceptional component of type (EC2) (see Table~\ref{InfoCSP}).
It remains to prove that $G$ is not a subgroup of a division ring. Otherwise,  one of the simple components of $\Q G$ is a division ring containing $G$ as an spanning set over $\Q$.
We have calculated the Wedderburn decomposition of $\Q G$ for all the groups $G$ in the proof of Proposition~\ref{Suficiencia}.
The only non-commmutative division algebra occurring in one of these Wedderburn decompositions is $\HQ(\Q)$ in the Wedderburn decomposition of $\Q(\q_8\times C_3)$. However $\q_8 \times C_3$ cannot be contained as an spanning set of $\HQ(\Q)$ over $\Q$ because the centre of $\HQ(\Q)$ has not elements of order 3.

(2) implies (1) is a direct consequence of the fact that every metabelian group is strongly monomial (Theorem~\ref{SSPmetabelian}).

(1) implies (3). We use the notation of Proposition~\ref{componentesdessp}.
Suppose that $G$ is CSP'-critical and $(H,K)$ is a strong Shoda pair of $G$ such that $A=\Q G e(G,H,K)$ is an exceptional component of type (EC2).
Then $\lambda_{H,K}^G$ is a faithful character of $G$, for $A$ is also a simple component of $\Q(G/\ker \lambda_{H,K}^G)$.
This implies that one of the conditions of Proposition~\ref{componentesdessp} hold.
In particular, the order of $G$ is bounded and hence the problem of deciding which groups satisfies (1) can be done in a finite number of computations.
Using this it is easy to write a program which calculate the list $\verb+L+$ of finite groups $G$ satisfying one of the conditions of Proposition~\ref{componentesdessp}. We have done this using GAP \cite{GAP} and Wedderga \cite{Wedderga} (see Section~\ref{Program}).
The list $\verb+L+$ contains all the groups satisfying (1) but it contains some groups not satisfying (1).
For example, if $G\in \verb+L+$ and $G/N$ satisfies (3) for some non-trivial normal subgroup $N$ of $G$ then $G$ is not CSP'-critical.
So we calculate, using GAP, a second list \verb+R+ excluding from $\verb+L+$ all the elements having a proper quotient  satisfying (3).
The groups forming \verb+R+ are precisely the groups of types (\ref{D6})-(\ref{G72}) from Theorem~\ref{Main}. This finishes the proof.

\end{proof}

%
%


Now we deal with the case where $G$ is an imprimitive subgroup of an exceptional component of type (EC2).
For that we need the following lemma from \cite{baniq} and three additional lemmas.

\begin{lemma}\label{imprimitivos}\cite[Lemma~2.2]{baniq}
Let $D$ be a division algebra and let $G$ be a subgroup of $\GL_2(D)$ spanning $M_2(D)$ over $\Q$.
Then $G$ is imprimitive if and only if there is a group homomorphism $\theta:N\rightarrow \GL_1(D)$ for $N$ a subgroup of index 2 of $G$ and $g\in G\setminus N$ such that if $K=\ker \theta$ then $K\cap K^g=1$.
\end{lemma}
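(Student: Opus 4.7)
The forward direction is essentially a direct unpacking of the definition. Assume $G$ is imprimitive and, after replacing $G$ by a conjugate, that every element of $G$ is monomial. Let $N$ be the subgroup of elements of $G$ whose matrix is diagonal. Since the product of two antidiagonal matrices is diagonal and the product of a diagonal and an antidiagonal matrix is antidiagonal, $N$ has index at most $2$ in $G$. The index cannot be $1$: otherwise every element of $G$ would be diagonal, and the $\Q$-span of $G$ would be contained in the proper subspace of diagonal matrices, contradicting the assumption that $G$ spans $M_2(D)$. Hence $[G:N]=2$. Define $\theta:N\to \GL_1(D)$ by $\theta(\diag(a,b))=a$; this is a group homomorphism and its kernel is $K=\{\diag(1,d):d\in D^*\}$. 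Pick any antidiagonal $g=\bigl(\begin{smallmatrix} 0 & x \\ y & 0\end{smallmatrix}\bigr)\in G\setminus N$. A direct computation shows $g\inv\diag(1,d)g=\diag(y\inv d y,1)$, so $K^g=\{\diag(c,1):c\in D^*\}$, and therefore $K\cap K^g=\{I\}$.

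For the converse, assume we are given $N\unlhd G$ of index $2$, $g\in G\setminus N$, and $\theta:N\to \GL_1(D)$ with $K=\ker\theta$ satisfying $K\cap K^g=1$. We must exhibit a basis of $D^2$ in which every element of $G$ acts monomially. The strategy is to identify the inclusion $\rho:G\hookrightarrow \GL_2(D)$ with the induced representation $\operatorname{Ind}_N^G\theta$, which is monomial of $D$-degree $2$ by construction. Concretely, set $\theta^g:N\to \GL_1(D)$, $\theta^g(n)=\theta(gng\inv)$; its kernel is $gKg\inv$, which is trivially intersected with $K$ by the same hypothesis. The plan is: (i) use Maschke's theorem for the finite group algebra $D[N]$ in characteristic zero to decompose $\rho|_N$ as a direct sum of two $1$-dimensional $D$-subrepresentations $V_1,V_2$ of $D^2$; (ii) show that these two pieces realize $\theta$ and $\theta^g$, using the faithfulness of $\rho$ together with the fact that the only $1$-dimensional $D$-constituents of $\rho|_N$ compatible with an index-$2$ extension to $G$ must be the pair $\{\theta,\theta^g\}$; (iii) observe that $g$ must swap $V_1$ and $V_2$ (otherwise each would be $G$-stable, forcing $\rho(G)\subseteq\GL_1(D)\times\GL_1(D)$, contradicting that $G$ spans $M_2(D)$ over $\Q$). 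Then any basis obtained by picking one $D$-generator of $V_1$ and its $g$-image in $V_2$ puts $N$ in diagonal form and $g$ in antidiagonal form, making every element of $G$ monomial.

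The forward direction is routine; the substantive work is in step (ii) of the converse. The main obstacle is that $D$ is only a division ring, so the usual character-theoretic argument for identifying the constituents of $\rho|_N$ has to be replaced by a module-theoretic one: we must check that an $N$-invariant $D$-line $V\subset D^2$ on which $N$ acts via $\theta$ exists, and that its $g$-translate $gV$ is distinct from $V$ (equivalently, that $V\cap gV=0$, which is exactly what $K\cap K^g=1$ delivers, since a nonzero vector in $V\cap gV$ would be fixed by both $K$ and $K^g$, forcing $K\cap K^g$ to act trivially on a nonzero vector and hence, by the spanning hypothesis, to lie in the kernel of $\rho$). Once these eigenspaces are produced, the monomial form of $G$ follows immediately from the index-$2$ relation between $g$ and $N$.
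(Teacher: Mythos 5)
The paper does not prove this lemma; it is imported verbatim from Banieqbal's paper \cite[Lemma~2.2]{baniq}, with the authors only adding a cautionary remark that the spanning hypothesis (absent from Banieqbal's statement but used in his proof) is needed to rule out $[G:N]=1$. So there is no in-paper argument to compare against. Your forward direction handles exactly that point correctly, and the rest of it (take a monomial conjugate, set $N$ to be the diagonal elements, define $\theta$ by the top-left entry, verify $K\cap K^g=1$ by a direct matrix computation) is sound, modulo the small imprecision that $K=\ker\theta$ is \emph{contained in} $\{\diag(1,d):d\in D^*\}$ rather than equal to it; this does not affect the conclusion $K\cap K^g=1$.

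The converse is where the substance of the lemma lies, and your sketch has a genuine gap at steps (i)--(ii). Maschke's theorem gives semisimplicity of $D[N]$, but by itself does not show that $\rho|_N$ is reducible: a priori $D^2$ could be $D[N]$-irreducible, and producing a $D$-line is precisely the obstruction the converse must overcome, not a free consequence of semisimplicity. Step (ii) then rests on the unjustified assertion that ``the only $1$-dimensional $D$-constituents of $\rho|_N$ compatible with an index-$2$ extension to $G$ must be the pair $\{\theta,\theta^g\}$.'' The hypothesis hands you $\theta$ only as an abstract homomorphism $N\to D^*$; nothing yet links it to the ambient embedding $\rho$. Without such a link, the faithful induced representation $\operatorname{Ind}_N^G\theta$ could land in a different simple component of $\Q G$ isomorphic to $M_2(D)$ (repeated components do occur; e.g.\ $\Q(C_3^2\rtimes_2 C_8)$ contains two copies of $M_2\quat{-1,-3}{\Q}$), in which case nothing would force the given $G\subseteq\GL_2(D)$ to be conjugate to that monomial image. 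Identifying the two representations is the actual work of the converse and is missing here. Finally, your concluding claim that $V\cap gV=0$ ``is exactly what $K\cap K^g=1$ delivers'' is misattributed: once $V$ exists, $V\ne gV$ follows from the spanning hypothesis (a common $D$-line would be $G$-invariant, so the $\Q$-span of $\rho(G)$ would stabilize a proper $D$-subspace, contradicting that it equals $M_2(D)$), whereas $K\cap K^g=1$ plays a different role, namely the faithfulness of the induced representation via $\ker\operatorname{Ind}_N^G\theta=\core_G(K)=K\cap K^g$.
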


Observe that the hypothesis that $G$ spans $M_2(D)$ is not present in the statement of \cite[Lemma~2.2]{baniq} while it is used in its proof. This hypothesis is necessary because otherwise $G$ could be included diagonally on $M_2(D)$. For example, if $G$ is a subgroup of a division ring then one can embedded $G$ diagonally on $\GL_2(D)$. This is an imprimitive representation and if the order of $G$ is odd then the existence of the subgroup $N$ mentioned in the lemma is not possible.


\begin{lemma}\label{ImprimitiveSSP}
Assume that $G$ is an imprimitive subgroup of an exceptional component $M_2(D)$ of type (EC2) of $\Q G$ and let $\theta:N\rightarrow \GL_1(D)$, $g\in G$ and $K=\ker \theta$ satisfy the conditions of Lemma~\ref{imprimitivos}.
If $D$ is an SSP component of $\Q(N/K)$ then $M_2(D)$ is an SSP component of $\Q G$.
\end{lemma}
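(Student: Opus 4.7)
The plan is to lift the strong Shoda pair $(H/K,L/K)$ of $N/K$ that realizes $D$ to subgroups $K\le L\le H\le N$, and to show that $(H,L)$ is a strong Shoda pair of $G$ with $\Q Ge(G,H,L)\cong M_2(D)$.

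The first step is to transfer the strong Shoda pair from $N/K$ to $N$. Using the isomorphism $\Q N\widehat{K}\cong\Q(N/K)$, under which $\widehat{M}$ corresponds to $\widehat{M/K}$ whenever $K\le M\le N$, one checks that $\varepsilon(H,L)$ and $e(N,H,L)$ correspond respectively to $\varepsilon(H/K,L/K)$ and $e(N/K,H/K,L/K)$. The strong Shoda pair axioms therefore lift, and $\Q Ne(N,H,L)\cong D$. Since $D$ is a division algebra, the matrix formula (\ref{SSPComponent}) for this simple component forces $N_N(L)=N$; hence $L\unlhd N$ and $H\unlhd N$, and $e(N,H,L)=\varepsilon(H,L)$.

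Next, the imprimitive embedding from Lemma~\ref{imprimitivos} realizes the restriction to $\Q N$ of the primitive central idempotent $e$ of $\Q G$ with $\Q Ge\cong M_2(D)$ as $e_1+e_1^g$, where $e_1=\varepsilon(H,L)$ corresponds to $\theta$ and $e_1^g=\varepsilon(H^g,L^g)$ to $\theta^g$. The hypothesis that $G$ spans $M_2(D)$ forces $e_1\ne e_1^g$: if they coincided then $\theta$ would extend to $G$ and the simple component of $\Q G$ above $e_1$ would be a smaller algebra over $D$ rather than $M_2(D)$. Being distinct primitive central idempotents of $\Q N$, $e_1$ and $e_1^g$ are orthogonal in $\Q N$; conjugating $\varepsilon(H,L)\cdot\varepsilon(H^g,L^g)=0$ by arbitrary $n\in N$ then shows that any two distinct $G$-conjugates of $\varepsilon(H,L)$ are orthogonal.

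Finally I would verify the remaining strong Shoda pair axioms for $(H,L)$ in $G$. From $e_1\ne e_1^g$ one obtains $\Stab_G(\varepsilon(H,L))=N_G(L)\cap N_G(H)\subseteq N$, which combined with $H,L\unlhd N$ gives $N_G(L)\cap N_G(H)=N$; since $[G:N]=2$, at most one of $N_G(L)$ and $N_G(H)$ can properly exceed $N$. The main obstacle I anticipate is to exclude the asymmetric case $L^g=L$ with $H^g\ne H$, where $N_G(L)=G$ but $N_G(H)=N$ and $(H,L)$ would fail to be a strong Shoda pair of $G$; I would handle this by showing that in this configuration the two primitive central idempotents $e_1,e_1^g$ would have to sit as distinct isomorphic copies of $D$ in $\Q(N/L)$ in a way incompatible with $M_2(D)$ being an exceptional component of type (EC2) spanned imprimitively by $G$. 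Once that case is ruled out, $N_G(L)=N$, so $H\unlhd N_G(L)$ and $H/L$ is maximal abelian in $N_G(L)/L=N/L$; together with the orthogonality already established, $(H,L)$ is a strong Shoda pair of $G$, and $e(G,H,L)=\varepsilon(H,L)+\varepsilon(H,L)^g=e$ yields $\Q Ge(G,H,L)\cong M_2(D)$.
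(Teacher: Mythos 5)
Your approach diverges from the paper's in a way that leaves a real gap, which you yourself flag but do not close.

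First, a point you skip: the paper shows that the lower subgroup of the lifted strong Shoda pair is precisely $K$ (your $L$ satisfies $L=K$). This follows because $\theta$ is faithful on $N/K$, so the core of $L/K$ in $N/K$ is the kernel of $\lambda_{H/K,L/K}^{N/K}$, which is trivial; once $L/K\unlhd N/K$ (forced by $D$ being a division algebra) this gives $L/K=1$. You keep $L$ as a general subgroup with $K\le L\le H$, which costs you later: when you argue about $L^g$ you cannot use the hypothesis $K\cap K^g=1$.

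Second, your claim that $e_1=e_1^g$ would mean "$\theta$ extends to $G$" is not a correct justification. If $e_1=e_1^g$ then $e_1$ is a primitive central idempotent of $\Q G$, and $\Q G e_1$ is a crossed product of $D$ by $C_2$; this need not contain $\theta$ as a restriction of a representation of $G$. The correct reason $e_1\neq e_1^g$, when it is needed, is a dimension count against $\dim_\Q M_2(D)=4\dim_\Q D$, not an extension argument.

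Third, and most importantly, the asymmetric case you anticipate is a genuine obstruction, and you offer only a sketch of a hoped-for contradiction with no actual argument. The paper does \emph{not} insist on producing a strong Shoda pair of $G$ in this case. Instead, when $K$ is normal in $G$ (hence $K=K\cap K^g=1$) and $H$ is not maximal abelian in $G$, the paper shows that some $n^{-1}g$ with $n\in N$ centralizes $H$, so $H\unlhd G$; then $G/H$ is abelian because $[N:H]$ is the degree of $D$ (at most $2$), and since $H$ is cyclic the group $G$ is metabelian and therefore strongly monomial, so \emph{every} component of $\Q G$, in particular $M_2(D)$, is automatically SSP. This escape hatch is exactly what resolves your "asymmetric case," and your proposal has no substitute for it.
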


\begin{proof}
The rules
$$n\in N \rightarrow \begin{pmatrix}  \theta(n) & 0 \\ 0 & \theta(g^{-1} n g) \end{pmatrix} , \; g\rightarrow \begin{pmatrix}  0 & \theta(g^2)  \\ 1 & 0 \end{pmatrix}$$
define an injective group homomorphism $\overline{\theta}:G\rightarrow \GL_2(D)$.
Moreover, $D$ is generated over $\Q$ by $\theta(N)$, since $M_2(D)$ is spanned over $\Q$ by $\overline{\theta}(G)$.
Thus $D$ is spanned over $\Q$ by a subgroup isomorphic to $N/K$ and hence $D$ is a simple component of $\Q(N/K)$.
Suppose that $(H/K,K_1/K)$ is a strong Shoda pair of  $N/K$ such that $D=\Q (N/K) e(N/K,H/K,K_1/K)$.
Then $\theta$ is an irreducible representation of $N$ affording the character $\lambda_{H,K_1}^N$. As $K=\ker \theta$, $\theta$ lifts to a faithful representation $\rho$ of $N/K$ affording the character $\lambda_{H/K,K_1/K}^{N/K}$.
Using (\ref{SSPComponent}) and the fact that $D$ is a division algebra, we deduce that $K_1/K$ is normal in  $N/K$.
Thus $K_1/K=\core_{N/K}(K_1/K)=\ker \lambda_{H/K,K_1/K}^{N/K}=1$ so that $K_1=K$.
As $K\unlhd N$ and $H/K$ is cyclic and maximal abelian and normal in $N/K$, we deduce from \cite[Corollary 3.6]{ORS} that $(H,K)$ is a strong Shoda pair of $N$ and $e(N,H,K)=\varepsilon(H,K)$ is a primitive central idempotent of $\Q N$.
Since $N\unlhd G$, the $G$-conjugates of $\varepsilon(H,K)$ are primitive central idempotents of $\Q N$ and hence they are orthogonal.
If $N_G(K)=N$ then $(H,K)$ is a strong Shoda pair of $G$. Then $\Q e(G,H,K)\cong M_2(D)$, by (\ref{SSPComponent}).
Otherwise, $K\unlhd G$ and hence $K=K\cap K^g=1$. If $H$ is maximal abelian in $G$ then again $(H,K)$ is strong Shoda pair of $G$. Moreover $\overline{\theta}$ affords the character $\lambda_{H,1}^G$ and hence $M_2(D)=\Q G e(G,H,K)$.
Assume otherwise that $H$ is not maximal abelian in $G$. Then there is $n\in N$ such that $n\inv g\in \Cen_G(H)$.
As $(H,1)$ is a strong Shoda pair of $N$, $H$ is normal in $N$ and therefore  $h^g=h^n \in H$ for every  $h\in H$.
Therefore $H\unlhd G$. Moreover $[N:H]$ is the degree of $D$ which is either 1 or 2.
Thus $G/H$ is abelian. Since $H$ is cyclic, we deduce that $G$ is metabelian and hence every simple component of $\Q G$ is SSP.
So in all the cases $M_2(D)$ is an SSP component of $\Q G$.
\end{proof}

\begin{lemma}\label{losHs}
Let $N$ be a finite group containing two normal subgroups $K_1$ and $K_2$ such that $K_1\cong K_2$, $K_1\cap K_2=1$ and $N/K_i\cong \SL(2,3).$ Then $N$ is isomorphic to one of the following groups:
\begin{enumerate}
\item $\SL(2,3)$.
\item $\SL(2,3)\times \SL(2,3).$
\item $(\q_8\times \q_8)\rtimes C_3=(\GEN{i_1,j_1}_{\q_8}\times \GEN{i_2,j_2}_{\q_8})\rtimes \GEN{c}$, with $i_k^c=j_k$ and $j_k^c=i_kj_k$ for every $k=1,2$.
\item $\SL(2,3)\times C_2.$
\end{enumerate}
\end{lemma}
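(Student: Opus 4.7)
My plan is to embed $N$ into $\SL(2,3)\times\SL(2,3)$ via $\phi(n)=(nK_1,nK_2)$, which is injective because $K_1\cap K_2=1$. The key observation is that $K_2\cong K_1K_2/K_1$ is a normal subgroup of $N/K_1\cong\SL(2,3)$, and symmetrically for $K_1$. Since the normal subgroups of $\SL(2,3)$ are exactly $1$, $Z(\SL(2,3))$, $\q_8$, and $\SL(2,3)$, and $|K_1|=|K_2|$, only four cases arise: $|K_i|\in\{1,2,8,24\}$, and these will correspond precisely to the four items of the lemma.

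The extremes are immediate: $|K_i|=1$ gives $N\cong\SL(2,3)$ (item 1), and $|K_i|=24$ forces $|N|=576=|\SL(2,3)\times\SL(2,3)|$, so $N\cong K_1\times K_2\cong\SL(2,3)\times\SL(2,3)$ (item 2). For $|K_i|=8$, both $K_i$ are copies of $\q_8$, the product $K_1K_2=K_1\times K_2$ is normal in $N$ of order $64$, and $N/(K_1K_2)\cong C_3$; by Schur--Zassenhaus, $N\cong(\q_8\times\q_8)\rtimes C_3$. The action of $C_3$ on each factor $K_i\cong\q_8$ is forced, via $N/K_{3-i}\cong\SL(2,3)$, to be the standard order-$3$ action of a Sylow $3$-subgroup of $\SL(2,3)$ on its Sylow $2$-subgroup (which is unique up to $\Aut(\q_8)$), so a suitable choice of generators yields the presentation of item (3).

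The case $|K_i|=2$ is the main obstacle. Here $K_2$ embeds into $N/K_1\cong\SL(2,3)$ as its unique subgroup of order $2$, namely $Z(\SL(2,3))$; therefore $[N,K_2]\subseteq K_1\cap K_2=1$, so $K_2$ is central, and symmetrically $K_1\subseteq Z(N)$. Hence $V:=K_1\times K_2$ is a central Klein four subgroup and $N/V\cong A_4$. Under $\phi$, the image $\phi(N)$ is the preimage of the graph of some automorphism $\sigma$ of $A_4$ under the natural projection $\SL(2,3)\times\SL(2,3)\to A_4\times A_4$. Inner automorphisms of $A_4$ produce $A_4\times A_4$-conjugate graphs, so by $\Out(A_4)\cong C_2$ only two cases remain. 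In each, direct inspection shows that $\phi(N)$ is generated by a (possibly outer-twisted) diagonal copy of $\SL(2,3)$ together with the commuting central element $(1,z)$, where $z$ generates $Z(\SL(2,3))$; this gives $N\cong\SL(2,3)\times C_2$, which is item (4). The obstacle to watch out for is making sure that both $\sigma$-cases collapse to the same group up to isomorphism, which they do via this explicit description.
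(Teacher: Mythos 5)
Your case division (on $\lvert K_i\rvert\in\{1,2,8,24\}$ via $K_2\cong K_1K_2/K_1\unlhd N/K_1\cong\SL(2,3)$) is exactly the paper's, and the cases $\lvert K_i\rvert\in\{1,8,24\}$ are handled essentially identically (the paper does not name Schur--Zassenhaus for the order-$8$ case but uses the same idea; both rely on the fact that all order-$3$ elements of $\Aut(\q_8)\cong S_4$ are conjugate to normalize the generators of each $K_i$). Where you genuinely diverge is the central case $\lvert K_i\rvert=2$: the paper works entirely inside $N$, showing the Sylow $2$-subgroup is $\q_8\times C_2$ with exactly four $\q_8$'s, finding a $\q_8$ stabilized by a Sylow $3$-subgroup, and assembling $\SL(2,3)\times C_2$ by hand; you instead push $N$ into $\SL(2,3)\times\SL(2,3)$ and apply Goursat's lemma, identifying $\phi(N)$ as the preimage of the graph of some $\sigma\in\Aut(A_4)$ and reducing to the two classes in $\Out(A_4)$. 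Your argument is correct and cleaner, but it quietly uses that $\sigma$ lifts along $\SL(2,3)\to A_4$ to an automorphism $\tilde\sigma$ of $\SL(2,3)$ (so that the twisted diagonal $\{(x,\tilde\sigma(x))\}$ is again a copy of $\SL(2,3)$); this holds because the natural map $\Aut(\SL(2,3))\to\Aut(A_4)$ is an isomorphism (injective since $\SL(2,3)$ has no nontrivial homomorphism to $Z\cong C_2$, and both groups are $S_4$), and is worth a sentence. It is also worth noting explicitly that your reduction to $\Out(A_4)$ requires conjugating $H=\phi(N)$ inside $\SL(2,3)\times\SL(2,3)$, not $A_4\times A_4$, which works precisely because inner automorphisms of $A_4$ lift to inner automorphisms of $\SL(2,3)$. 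What your route buys is economy and conceptual clarity (Goursat packages the whole last case); what the paper's route buys is complete self-containment with no appeal to $\Aut(A_4)$ or lifting.
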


\begin{proof}
Clearly if $K_1=1$ then $N\cong \SL(2,3)$. So we assume that $K_1\ne 1$.
The assumption of the lemma implies that $K_1\times K_2$ is a normal subgroup of $N$.
Then $K_1$ is isomorphic to a non-trivial normal subgroup of $N/K_2\cong \SL(2,3)$.
Thus $K_1$ is isomorphic to either $C_2$, $Q_8$ or $\SL(2,3)$.
We discuss each case separately.

If $K_1\cong \SL(2,3)$ then $|N|=576=|K_1\times K_2|$. Thus $N=K_1\times K_2\cong \SL(2,3)\times \SL(2,3)$.

Assume $K_1 \cong \q_8$. Then $|N|=192$ and $[N:K_1\times K_2]=3$.
Hence $N/(K_1\times K_2)\cong \GEN{\overline{c}}_3$ for some $c\in N$. As $N/K_k\cong \SL(2,3)$, one can choose generators $i_k$ and $j_k$ of $K_k$ such that the action of $c$ on $K_k=\GEN{i_k, j_k}$ is given by $i_k^c=j_k$ and $j_k^c=i_kj_k$. Therefore $N\cong (\q_8\times \q_8)\rtimes C_3$ and the conditions of (3) hold.

Finally suppose that $K_1=\GEN{z_i}_2$. Then $|N|=48$ and $K_i\leq Z(N)$. Let $P$ be a Sylow $2$-subgroup of $N$ and $c$ a generator of a Sylow $3$-subgroup of $N$.
Since $N/K_i\cong \SL(2,3)=\q_8\rtimes C_3$, we have $\q_8\cong P/K_i \unlhd N/K_i$.
Thus $P\unlhd N$ $P$ is non-abelian of order 16 and $K_1\times K_2 \subseteq Z(P) \subset P$.
As  $P/Z(P)$ is not cyclic, necessarily $Z(P)=K_1\times K_2\cong C_2^2\cong P/Z(P)$.
Let $a,b\in P$ such that $\langle aK_1, bK_1 \rangle = P/K_1$.
Then $P=\langle a, b, z_1 \rangle$ where $a$ and $b$ have order $4$ and $a^2, b^2, (a,b)\in (K_1\times K_2)\setminus (K_1\cup K_2) = \{ z_1z_2\}$.
Thus $a^2=b^2=(a,b)=z_1z_2$ and this implies that $\GEN{a,b}\cong \q_8$ and $\langle a, b\rangle\cap  \langle z_1\rangle=1$.
Hence $P=\GEN{a,b}\times K_i \cong \q_8\times C_2$.
Now it is easy to check that $P$ has exactly four subgroups isomorphic to $\q_8$.
Namely $\langle a, b\rangle, \langle az_1, b\rangle, \langle a, bz_1\rangle$ and  $\langle az_1,bz_1\rangle$.
The action of $\GEN{c}_3$ on $P$ permutes these groups and hence it leaves invariant one of them.
So let $\q_8\cong Q\le P$ with $Q^c=Q$.
Observe $Q\cap K_1=Q\cap K_2=1$, so after a suitable change of generators we may assume that $Q=\GEN{a,b}$. Since $z_2^c=1$, we necessarily have that the action of $c$ on $Q$ has order $3$ and hence $\GEN{Q,c}=\SL(2,3)$ and $N=\GEN{Q,c}\times K_1\cong \SL(2,3)\times C_2$.
\end{proof}

\begin{lemma}\label{G1152}
Let $G$ be a group and let $N=K_1\times K_2$ be a subgroup of index 2 of $G$ with $K_1\cong K_2\cong \SL(2,3)$.
If the action of $G$ on $N$ permutes $K_1$ and $K_2$ then $G=(\SL(2,3)\times \SL(2,3))\rtimes \GEN{g}_2$ with $(x,y)^g=(y,x)$, for every $(x,y) \in \SL(2,3)\times \SL(2,3)$.
\end{lemma}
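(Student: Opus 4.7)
The plan is to produce an explicit splitting $G = N \rtimes \langle g \rangle$ with $g^2 = 1$, and then identify $N$ with $K_1 \times K_1$ so that conjugation by $g$ becomes the coordinate swap. First I would fix any $g_0 \in G \setminus N$ and let $\sigma$ denote conjugation by $g_0$ on $N$. Since $\sigma$ interchanges $K_1$ and $K_2$ by hypothesis, there are isomorphisms $\phi : K_1 \to K_2$ and $\psi : K_2 \to K_1$ determined by $\sigma(y,1) = (1,\phi(y))$ and $\sigma(1,z) = (\psi(z),1)$. I would then write $g_0^2 = (x_1, x_2) \in K_1 \times K_2$.

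Next I would exploit two compatibility relations satisfied by $\sigma$ and $g_0^2$. The equality $\sigma^2 = \mathrm{inn}(g_0^2)$ on $N$ yields $\psi \circ \phi = \mathrm{inn}(x_1)$ on $K_1$ and $\phi \circ \psi = \mathrm{inn}(x_2)$ on $K_2$. The equality $\sigma(g_0^2) = g_0^2$, which holds because $g_0$ commutes with $g_0^2$, combined with the formula $\sigma(x_1, x_2) = (\psi(x_2), \phi(x_1))$, forces the crucial relations $\psi(x_2) = x_1$ and $\phi(x_1) = x_2$. These relations are what allow the splitting.

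Now I would set $g := g_0 \cdot (x_1^{-1}, 1)$. A direct computation, using $\sigma^{-1}(x_1^{-1},1) = (1, \psi^{-1}(x_1)^{-1}) = (1, x_2^{-1})$ via $\psi(x_2) = x_1$, gives $g^2 = 1$, so $\langle g \rangle$ has order $2$ and $G = N \rtimes \langle g \rangle$. A parallel short calculation produces $\psi' = \psi$ and $\phi'(y) = x_2^{-1} \phi(y) x_2$ for the cross-isomorphisms attached to $g$, and the identity $\psi \circ \phi = \mathrm{inn}(x_1)$ together with $\phi(x_1) = x_2$ then yields $\psi \circ \phi' = \mathrm{id}_{K_1}$.

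Finally I would identify $N$ with $K_1 \times K_1$ via $(y,z) \leftrightarrow (y, \psi(z))$. Under this identification conjugation by $g$ sends $(a,b) \in K_1 \times K_1$ to $(\psi'(\psi^{-1}(b)), \psi(\phi'(a))) = (b, a)$, using $\psi' = \psi$ and $\psi \circ \phi' = \mathrm{id}_{K_1}$. Fixing any isomorphism $K_1 \cong \SL(2,3)$ then yields $G \cong (\SL(2,3) \times \SL(2,3)) \rtimes \langle g \rangle_2$ with the coordinate swap action. The only subtle step is recognizing the identities $\psi(x_2) = x_1$ and $\phi(x_1) = x_2$ forced by $g_0$ commuting with $g_0^2$; with those in hand, every other step is routine bookkeeping.
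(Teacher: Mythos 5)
Your proposal is correct and follows essentially the same path as the paper: both fix $g_0\in G\setminus N$, write $g_0^2=(x_1,x_2)$, exploit that $g_0$ centralizes $g_0^2$ to pin down the relations between $x_1$, $x_2$ and the cross-isomorphisms, and then replace $g_0$ by $g_0(x_1^{-1},1)$ to obtain an involution splitting $N$ and realize the coordinate-swap action. The only stylistic difference is that you read off $\psi(x_2)=x_1$ and $\phi(x_1)=x_2$ directly from $\sigma(g_0^2)=g_0^2$, whereas the paper first gets the equality modulo the center of $\SL(2,3)$ from $\tau\sigma=c_{s_1}$ and then uses the same commutation to eliminate the central factor; your bookkeeping is a bit cleaner but the underlying idea is identical.
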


\begin{proof}
We may assume that $N=\SL(2,3)\times \SL(2,3)$, $K_1=\SL(2,3)\times 1$ and $K_2=1\times \SL(2,3)$.
For every $s\in \SL(2,3)$, let $c_s$ denote the inner automorphism of $\SL(2,3)$ given by $c_s(x)=x^s$.

Let $g_1\in G\setminus N$ and suppose $g_{1}^2=(s_1,s_2)$. By assumption, there are $\tau,\sigma\in \Aut(\SL(2,3))$ such that $(x,y)^{g_1}=(\tau(y),\sigma(x))$ for every $(x,y)\in N$. Then 
    $$(c_{s_1}(x),c_{s_2}(y))=(x,y)^{g_{1}^2}=(\tau\sigma(x),\sigma\tau(y)),$$ or equivalently $c_{s_1}=\tau\sigma$ and $c_{s_2}=\sigma\tau$.
Therefore $(x,y)^{g_1}=(\sigma^{-1}(y)^{s_1},\sigma(x))=(\sigma^{-1}(y^{s_2}),\sigma(x))$ for every $x,y\in \SL(2,3)$.

We claim that $s_2=\sigma(s_1)$. Indeed, if $x\in \SL(2,3)$ then $\tau(x)=\sigma^{-1}c_{s_2}(x)=\sigma^{-1}(s_2^{-1}xs_2)=c_{\sigma^{-1}(s_2)}\sigma^{-1}(x)$.
Therefore $c_{\sigma^{-1}(s_2)}=\tau\sigma=c_{s_1}$ and hence if $z=s_1\inv \sigma\inv(s_2)$ then $z\in \SL(2,3)$ and  $s_2=\sigma(s_1z)$.
Consequently $(s_1,\sigma(s_1z))=g_1^2=(g_1^2)^{g_1}=(c_{s_1} (s_1z),\sigma(s_1))=(s_1z,\sigma(s_1))$.
Thus $z=1$ and the claim follows.

Let $g_2=g_1 (s_1\inv,1)$. Then $$g_2^2= g_1^2 (s_1\inv,1)^{g_1} (s_1,1) = (s_1,\sigma(s_1)) (1,\sigma(s_1\inv))(s_1\inv,1)=1.$$
Therefore, one may assume without loss of generality that $g_1^2=1$.
Hence, by the previous paragraphs $(x,y)^{g_1}=(\sigma\inv(y),\sigma(x))$.
Let $K=(\SL(2,3)\times \SL(2,3))\rtimes \GEN{g}_2$, with $(x,y)^g=(y,x)$. Then the map $\alpha:(x,y)g^i\in K\mapsto (x,\sigma(y))g_1^i\in G$ is an isomorphism because $\alpha(g(x,y))=\alpha((y,x)g)=(y,\sigma(x))g_1=g_1(x,\sigma(y))=\alpha(g)\alpha(x,y)$.
\end{proof}

We are ready to prove Theorem~\ref{Main} for non-metabelian imprimitive subgroups of two-by-two matrix rings over division rings.

\begin{proposition}\label{CSPImprimitive}
Let $G$ be a non metabelian CSP'-group which is an imprimitive subgroup of an exceptional component of type (EC2) of $\Q G$. Then $G$ is one of the groups  in items (\ref{G96})-(\ref{G384})  of Theorem~\ref{Main}.
\end{proposition}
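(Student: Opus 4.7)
The plan is to apply Lemma~\ref{imprimitivos} to extract the combinatorial data of the imprimitive embedding, constrain the quotient $N/K$ via Amitsur's classification, and then use Lemmas~\ref{losHs} and \ref{G1152} together with property (P1) to identify $G$.

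First, by Lemma~\ref{imprimitivos} there exist $N\unlhd G$ of index $2$, $g\in G\setminus N$ and $\theta:N\to\GL_1(D)$ with $K=\ker\theta$ satisfying $K\cap K^g=1$. Since $N'$ is characteristic in $N\unlhd G$, $(N')^g=N'$; if $N/K$ were abelian then $N'\le K$ would give $N'=(N')^g\le K^g$ and hence $N'\le K\cap K^g=1$, forcing $N$ abelian and $G$ metabelian (as $G'\le N$), a contradiction. Hence $N/K$ is a non-abelian finite subgroup of $D^{*}$. Since $M_2(D)$ is exceptional of type (EC2), $D$ is either $\Q$, an imaginary quadratic extension of $\Q$, or a totally definite rational quaternion algebra, and only the last admits non-abelian finite subgroups. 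For such a $D$ any maximal subfield is imaginary quadratic, so element orders of $D^{*}$ lie in $\{1,2,3,4,6\}$; inspecting Theorem~\ref{SubgruposAD} under this constraint eliminates $\bo^{*}$, $\SL(2,5)$, $\q_{4m}$ for $m\ge 4$, and forces the $Z$-group factor $M$ in $\q_8\times M$ or $\SL(2,3)\times M$ to be trivial. Thus $N/K\in\{\q_8,\,\q_{12},\,\SL(2,3)\}$.

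The main case is $N/K\cong\SL(2,3)$. Since conjugation by $g$ is an automorphism of $N$, $K^g\unlhd N$ with $N/K^g\cong\SL(2,3)$ and $K\cap K^g=1$, so Lemma~\ref{losHs} (applied with $K_1=K$, $K_2=K^g$) forces
    $$N\in\{\SL(2,3),\;\SL(2,3)\times\SL(2,3),\;(\q_8\times\q_8)\rtimes C_3,\;\SL(2,3)\times C_2\}.$$
For each such $N$ the extension $1\to N\to G\to C_2\to 1$ is determined by the automorphism of $N$ induced by $g$, which must exchange $K$ with $K^g$: when $N=\SL(2,3)\times\SL(2,3)$, Lemma~\ref{G1152} identifies $G$ as $(\SL(2,3)\times\SL(2,3))\rtimes C_2$ with $g$ acting by swap, and in the remaining sub-cases the swap condition together with the known outer automorphism groups of $N$ leaves only a handful of extensions to inspect. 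For $N/K\cong\q_8$ or $\q_{12}$ I would argue analogously: the map $n\mapsto(nK,nK^g)$ realises $N$ as a subdirect product of $N/K$ with itself, so Goursat's lemma combined with the scarcity of normal subgroups in $\q_8$ and $\q_{12}$ reduces $N$ to a short list, for each of which the extension by $g$ can be written down explicitly.

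To finish, each candidate $G$ is sieved by (P1): if $G$ has a proper quotient isomorphic to any group already listed in Theorem~\ref{Main} (for instance to a metabelian CSP'-critical group from Proposition~\ref{CSPMetabelian}, or to $\SL(2,3)$, $\SL(2,3)\Ydown_2 C_4$ or $\SL(2,3)\Ydown_2\D_8$), it is discarded. Only two groups survive: $\SL(2,3)\Ydown_2\D_8$ (arising from $N\cong\SL(2,3)\times C_2$) and $\mathcal{C}=(\q_8\times\q_8)\rtimes C_6$ (arising from $N\cong(\q_8\times\q_8)\rtimes C_3$ with $g$ swapping the two $\q_8$-factors). The Wedderburn decompositions computed in the proof of Proposition~\ref{Suficiencia} confirm that both genuinely carry the claimed exceptional component $M_2(\HQ(\Q))$, and Lemma~\ref{ImprimitiveSSP} guarantees that in each case this component is an SSP component of $\Q G$ arising from the original imprimitive embedding. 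The main obstacle will be the sub-case $N\cong(\q_8\times\q_8)\rtimes C_3$, where one must verify that the swap of the two $\q_8$-factors is (up to group isomorphism) the unique order-$2$ automorphism of $N$ that yields a non-metabelian $G$ satisfying $K\cap K^g=1$ and having no proper quotient in the list.
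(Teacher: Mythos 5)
Your proposal identifies the correct framework (Lemma~\ref{imprimitivos} to obtain $N$, $K$ and $g$; Lemmas~\ref{losHs} and~\ref{G1152} to reconstruct $N$ and $G$; property (P1) to sieve), and your direct argument that $N/K$ is non-abelian (via $N'$ being characteristic) is a nice elementary alternative to the paper's route. However, there are two genuine gaps.

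The more serious one: your element-order bound only narrows $N/K$ to $\{\q_8,\,\q_{12},\,\SL(2,3)\}$, and you never actually dispose of $\q_8$ or $\q_{12}$. Deferring these to ``Goursat plus scarcity of normal subgroups'' is not a sketch that converges cheaply --- for $N/K\cong\q_8$ one can have $N$ as large as $\q_8\times\q_8$, so $|G|$ up to $128$, and sieving the resulting extensions by (P1) is substantial and unexecuted. The paper's key move, which you skip, is to combine Lemma~\ref{ImprimitiveSSP} with Proposition~\ref{CSPMetabelian}: after checking that $G$ is not a subgroup of a division ring (a short step you also omit), Proposition~\ref{CSPMetabelian} shows $M_2(D)$ cannot be an SSP component of $\Q G$, and the contrapositive of Lemma~\ref{ImprimitiveSSP} then forces $D$ not to be an SSP component of $\Q(N/K)$, i.e.\ $N/K$ is \emph{not strongly monomial}. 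This kills $\q_8$ and $\q_{12}$ immediately (they are metacyclic, hence metabelian, hence strongly monomial) and pins $N/K\cong\SL(2,3)$ with no Goursat casework at all. Your element-order argument is strictly weaker.

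The second gap: once Lemma~\ref{losHs} gives $N\in\{\SL(2,3),\,\SL(2,3)^2,\,(\q_8\times\q_8)\rtimes C_3,\,\SL(2,3)\times C_2\}$, you assert ``only two survive'' without doing the work. The exclusions are not automatic: for $N\cong\SL(2,3)\times\SL(2,3)$ one must locate a normal $H$ (the full Sylow $2$-subgroup together with a diagonal $3$-element) with $G/H\cong\D_6$; for $N\cong\SL(2,3)$ one must show $g$ is forced to be a central involution so that $G/\GEN{g}\cong\SL(2,3)$, violating (P1); and for $N\cong(\q_8\times\q_8)\rtimes C_3$, deriving the explicit presentation of $\mathcal{C}$ in item (\ref{G384}) from ``$g$ swaps the factors and commutes with $C_3$'' requires tracking the eight normal quaternion subgroups of $N$ and how $g$ permutes them --- exactly the step you flag as ``the main obstacle'' and leave open. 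As written, the sieve is entirely unexecuted and the two surviving groups are asserted rather than derived.
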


\begin{proof}
Let $G$ satisfy the hypothesis of the proposition. and let  $D$ be a division ring such that $M_2(D)$ is an exceptional component of $\Q G$ and $G$ is an imprimitive subgroup of $M_2(D)$.
Let $\theta:N\rightarrow \GL_1(D)$, $K$ and $g\in G$ be as in Lemma~\ref{imprimitivos}.
By Lemma~\ref{ImprimitiveSSP}, $N/K$ is a subgroup of $\GL_1(D)$ and $D$ is a simple component of $\Q(N/K)$.
We claim that $G$ is not a subgroup of a division ring. Otherwise, by Proposition~\ref{CSPDivision}, $G$ is one of the groups of types (\ref{Zgrupos1})-(\ref{SL5}) of Theorem~\ref{Main}. However, the first three types are metabelian while the other two groups have not a subgroup of index 2. This proves the claim.
Since $G$ is not metabelian, by Proposition~\ref{CSPMetabelian}, the exceptional component $M_2(D)$ of $\Q G$ is not SSP.
Then, by Lemma~\ref{ImprimitiveSSP}, $D$ is not an SSP component of $\Q(N/K)$. Therefore $N/K$ is not strongly monomial.
In particular, $N/K$ is non-abelian. Thus $D$ is not comutative and hence it is a totally definite quaternion algebra over $\Q$.
Moreover, $N/K$  is not a Z-group and therefore it is one of the groups in item (NZ) of Theorem~\ref{SubgruposAD}. However the groups in (NZ)(b) and (NZ)(c) are metabelian and, in particular, strongly monomial, while the only non-commutative division algebra in the Wedderburn decomposition of $\Q \bo^*$ is $\HQ(\Q(\sqrt{2}))$ and the only division algebra in the Wedderburn decomposition of $\Q \SL(2,5)$ is $(\Q(\zeta_5)/\Q(\sqrt{5}),-1)$.  Hence $N/K=\SL(2,3)\times M$ with $M$ a Z-group of odd order. If $M$ is non-abelian then every Wedderburn component containing  $N/K$ should have degree greater than 2.
Hence $M$ is cyclic.
If $M\ne 1$ then the centre of $D$ should contain a root of unit of order greater than $2$, in contradiction with the fact that $Z(D)=\Q$. Thus $N/K\cong \SL(2,3)$.
This implies that $D=\HQ(\Q)$ and hence the order of the centre of $G$ is at most 2.
Moreover, $K\cap K^g=1$ and hence the subgroups $K_1=K$ and $K_2=K^g$ satisfy the hypothesis of Lemma~\ref{losHs}. Thus one may assume that one of the following conditions holds:
(1) $N= \SL(2,3)\times \SL(2,3)$ and $K=\SL(2,3)\times 1$ and $K^g=1\times \SL(2,3)$;
(2) $N= \SL(2,3)$ and $K=1$;
(3) $N= \SL(2,3)\times \GEN{z}_2$ and $K=\GEN{z}_2\ne K^g$;
(4) $N= (\q_8\times \q_8)\rtimes C_3$, $K=\q_8\times 1$ and $K^g=1\times \q_8$.

(1) Assume first that $N=\SL(2,3)\times \SL(2,3)$. Then, by Lemma~\ref{G1152},  $G\cong N\rtimes \GEN{g}_2$ with $(x,y)^g=(y,x)$.
Let $H=\GEN{P_2,b_1b_2}$, where $P_2$ is the Sylow 2-subgroup of $N$ and $\GEN{b_1,b_2}\cong C_3^2$ is a Sylow 3-subgroup of $N$.
Then $P_2^g=P_2$ and $(b_1b_2)^g=b_1b_2$. Then $H$ is a normal subgroup of $G$ and $G/H\cong \D_6$ in contradiction with (P1).

(2) Suppose now that $N\cong \SL(2,3)=\GEN{i,j}_{\q_8}\rtimes \GEN{a}_3$ and let $g\in G\setminus N$ and $Q=\GEN{i,j}$. Then $Q$ is a characteristic subgroup of $\SL(2,3)$ and hence $Q\lhd G$.
One may assume without loss of generality that $g^2\in Q$.
Then $N$ has eight elements of order 3 in two conjugacy classes $a^N=\{a,ia,ja,ija\}$ and $(a^2)^N = \{a^2,i^{-1}a^2,j^{-1}a^2,i^{-1}ja^2\}$.
If $a^g \in (a^2)^N$ then $G/N_2\cong \D_6$, in contradiction with (P1).
Therefore $a^g=a^q$ for some $q\in N$ and, as $Q$ contains a transversal of $C_N(a)$, we may assume that $q\in Q$.
Replacing $g$ by $gq\inv$, one may assume that $(a,g)=1$.
Let $\alpha$ be the restriction to $Q$ of the inner automorphism defined by $ag$.
Since the order of $a$ is 3 and the order of $q$ is a power of 2, the order of $\alpha$ is multiple of $3$.
Moreover $\Aut(Q)\cong S_4$ and therefore the order of $\alpha$ is $3$.
Then $(g,Q)=1$ and hence $g\in Z(G)$.
Thus $g^2=1$ and $G/\GEN{g}\cong \SL(2,3)$ in contradiction with (P1).

(3) Suppose now that $N=\SL(2,3)\times C_2=(\GEN{i,j}_{\q_8}\rtimes \GEN{a}_3)\times \GEN{z}_2$ and $K=\GEN{z}_2$.
The Sylow 2-subgroup of $N$ is $N_2=\GEN{i,j,z}\cong \q_8\times C_2$.
Hence $N_2$ is characteristic in $N$ it has three elements of order 2, namely $i^2,z$ and $i^2z$, and $i^2$ is the only one which is a square.
Thus $z^g=i^2z$ because $K\ne K^g$.
As in the previous case $N$ has eight elements of order 3 in two conjugacy classes $a^N=\{a,ia,ja,ija\}$ and $(a^2)^N = \{a^2,i^{-1}a^2,j^{-1}a^2,i^{-1}ja^2\}$ and if $a^g \in (a^2)^N$ then $G/N_2\cong \D_6$, in contradiction with (P1).
Therefore $(a,g)=1$ and thus $g^2\in \GEN{i^2}$. If $g^2=1$ then $(gz)^2=i^2$.
Hence we may assume that $g^2=i^2$. Then $g^z=zgz=gi^2=g^3$ and hence $\GEN{g,z}\cong \D_8$.
Moreover $N_2$ has 4 subgroups isomorphic to $\q_8$: $\GEN{i,j}, \GEN{i,jz}. \GEN{iz,j}$ and $\GEN{iz,jz}$.

We claim that $(g,i)=(g,j)=1$. If $\GEN{i,j}^g=\GEN{i,j}$ then conjugation by $ag$ induces an automorphism of $\GEN{i,j}\cong \q_8$ of order multiple of 3.
As $\Aut(\q_8)\cong S_4$ this implies that $g$ commutes with $i$ and $j$ and the claim follows. Otherwise, i.e. if $\GEN{i,j}^g\ne \GEN{i,j}$, the intersection of $\GEN{i,j}$ and $\GEN{i,j}^g$ is a cyclic subgroup of order $4$ and, by symmetry, one may assume that $\GEN{i,j}\cap \GEN{i,j}^g=\GEN{i}$.
Then $\GEN{i,j}^g=\GEN{i,jz}$.
If $j^g\in \GEN{i}$ then $\GEN{i^g}=\GEN{j^{g^2}}=\GEN{j}$  and hence $\GEN{i,j}^g=\GEN{i,j}$, contradicting the assumption.
Thus $j^g\in \{jz,j\inv z,ijz,i\inv j z\}$ and $j^{g^2}=j^{i^2}=j$.
If $j^g=j z$ or $j^g=j\inv z$ then $j=j^{g^2}=(jz)^g = j z i^2 z = i^2j$ or $j=j^{g^2}=(j\inv z)^g = j z i^2 z = i^2 j$, a contradiction. Thus $j^g = ijz$ or $j^g=i\inv j z$ and, replacing $i$ by $i\inv$ if necessary one may assume that $j^g=ijz$. Then $j=j^{g^2}=(ijz)^g = i^g ijz i^2 z = i^g i\inv j$ and therefore $i^g=i$. Thus $i^{ag}=j^g=ijz\ne j = i^a = i^{ga}$, in contradiction with $ga=ag$. This finishes the proof of the claim.

We conclude that $(\GEN{g,z},\SL(2,3))=1$, $\GEN{g,z}\cong \D_8$ and $\GEN{g,z}\cap \SL(2,3)=\GEN{g^2}=\GEN{i^2}$. Thus $G=\SL(2,3)\Ydown_2 \D_8$  i.e. $G$ is the group of item~(\ref{G96}) in Theorem~\ref{Main}.

(4) Finally suppose that $N\cong (\q_8\times \q_8)\rtimes C_3=(\GEN{i_1,j_1}_{\q_8}\times \GEN{i_2,j_2}_{\q_8})\rtimes \GEN{c}$, with $i_k^c=j_k$, $j_k^c=i_kj_k$ and $\GEN{i_1,j_1}^g=\GEN{i_2,j_2}$. As in the previous case $N$ has two conjugacy classes formed by elements of order $3$ represented by $c$ and $c^2$. If $c$ and $c^2$ are conjugate in $G$ and $N_2=\GEN{i_1,j_1,i_2,j_2}$ then $G/N_2\cong \D_6$,   contradicting (P1). As in the previous case this implies that we may assume that $(c,g)=1$ and, in particular, $g^2\in Z(N)=\GEN{i_1^2,i_2^2}$. However $((i_1)^2)^g=i_2^2$ and therefore $g^2\not\in \{i_1^2,i_2^2\}$.
If $g^2=i_1^2i_2^2$ then $(i_1^2g)^2=1$, so we may assume also that $g^2=1$.
On the other hand $N$ has eight normal subgroups isomorphic to $\q_8$, namely
    $$\matriz{{cccc}
    Q_{11}=\GEN{i_1,j_1}, & Q_{12}=\GEN{i_1i_2^2,j_1}, & Q_{13}=\GEN{i_1,j_1i_1^2}, & Q_{14}=\GEN{i_1i_2^2,j_1i_2^2},\\
    Q_{21}=\GEN{i_2,j_2}, & Q_{22}=\GEN{i_1^2i_2,j_2}, & Q_{23}=\GEN{i_2,i_1^2i_2}, & Q_{24}=\GEN{i_1^2i_2,i_1^2i_2}.}$$
Observe that $Q_{1x}\cap Q_{2y}=1$ and $(Q_{1x},Q_{2y})=1$ for every $x,y\in \{1,2\}$.
Moreover, $N$ has three elements of order $2$, namely $i_1^2$, $i_2^2$ and $i_1^2i_2^2$.
Furthermore $i_x^2\in Q_{1x}$ and $i_x^2\in Q_{2x}$ for every $x=1,2,3,4$. As $i_1^g=i_2$, we deduce that the action of $g$ by conjugation interchange the $Q_{1x}$'s with the $Q_{2x}$'s.
Therefore, if $b=c^2g$ then $b$ has order $6$ and the action of $b$ by conjugation interchange the $Q_{1x}$'s with the $Q_{2x}$'s.
Since the action of $c$ permutes transitively the three cyclic subgroups of order 4 of each $Q_{ix}$, after renaming the generators we may assume that
$i_2=i_1^b$ and $j_2=j_1^b$.
As $c=b^2$, we have $i_2^b=j_1$ and $j_2^b=i_1j_1$ and
$G=(\GEN{i_1,j_1}\times \GEN{i_2,j_2})\rtimes \GEN{b}_6$. Thus $G$ is the group of item (\ref{G384}) in Theorem~\ref{Main}.
\end{proof}

The following proposition completes the proof of Theorem~\ref{Main}.

\begin{proposition}\label{CSPPrimitive}
Let $G$ be a CSP'-critical group which does not satisfy the hypothesis of neither Proposition~\ref{CSPDivision}, nor Proposition~\ref{CSPMetabelian} nor Proposition~\ref{CSPImprimitive}. Then $G$ is one of the groups in items (\ref{G48})-(\ref{G1920})  of Theorem~\ref{Main}.
\end{proposition}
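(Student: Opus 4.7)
The plan is to appeal to Banieqbal's classification \cite{baniq} of the finite primitive subgroups of $\GL_2(D)$ for a division $\Q$-algebra $D$, and then to cut the resulting list down to items (\ref{G48})--(\ref{G1920}) of Theorem~\ref{Main} using the CSP'-critical hypothesis. By (P2), $G$ embeds in every exceptional component of $\Q G$. Since, by hypothesis, $G$ is not contained in any division ring and is not covered by Proposition~\ref{CSPImprimitive}, some exceptional component of $\Q G$ has the form $A=M_2(D)$ with $D$ equal to $\Q$, an imaginary quadratic extension of $\Q$, or a totally definite quaternion algebra over $\Q$, and $G$ is a primitive subgroup of $A$. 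In particular $Z(D)$ sits inside a small cyclotomic extension of $\Q$, which severely restricts the centres of the relevant Wedderburn components.

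The second step is to read off, from Banieqbal's classification, the finite list of isomorphism types of primitive finite subgroups of such $\GL_2(D)$. These groups are built around the binary polyhedral groups $\SL(2,3)$, $\SL(2,5)$, the binary octahedral group $\bo^*$, and the central product $\q_8\Ydown_2 \D_8$; they may be extended by suitable $2$-groups or by cyclic groups coming from Galois automorphisms of $D$. I would use analogues of Lemmas~\ref{losHs} and \ref{G1152} to pin down the precise extension structure in each case, proceeding by isolating a normal subgroup $N$ of $G$ whose image in $A$ is one of the polyhedral blocks and then analysing $G/N$ through its induced action on $N$ and its interaction with $Z(A)$.

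The third step applies the CSP'-critical hypothesis via the constraint (P1): no proper epimorphic image of $G$ can be isomorphic to any group already listed in Theorem~\ref{Main}. Propositions~\ref{CSPDivision} and \ref{CSPMetabelian} have already produced the CSP'-critical groups that embed in a division ring (items (\ref{Zgrupos1})--(\ref{SL5})) or are metabelian (items (\ref{D6})--(\ref{G72})), and Proposition~\ref{CSPImprimitive} handles the imprimitive non-metabelian case (items (\ref{G96})--(\ref{G384})); all of these must be forbidden as proper quotients of $G$. This, together with the bounded order imposed by $Z(D)\subseteq \R$ and by the degree constraint $[G:H]\le 4$ coming from the (EC2) shape of $A$, should force $G$ to lie in one of the seven isomorphism types of items (\ref{G48})--(\ref{G1920}). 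For each of these candidate groups the Wedderburn decomposition computed in the proof of Proposition~\ref{Suficiencia} confirms that $\Q G$ has an exceptional component of type (EC2), completing the proof.

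The main obstacle is the combinatorial explosion in the case analysis produced by Banieqbal's list: one has to show that none of the many extensions arising there can be CSP'-critical apart from the seven groups above. In practice I would handle this partly by hand, using quotient arguments to rule out a proper quotient isomorphic to small forbidden groups such as $\D_6$, $\D_8$, $\SL(2,3)$, or $\q_8\Ydown_2 \D_8$ (exactly in the style of the proofs of Propositions~\ref{CSPDivision}--\ref{CSPImprimitive}), and partly computationally in GAP \cite{GAP} with the Wedderga package \cite{Wedderga}, as was done for the metabelian case in Proposition~\ref{CSPMetabelian}, reducing the final verification to a bounded-order group search.
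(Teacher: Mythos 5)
Your overall strategy does match the paper's: run through Banieqbal's classification of primitive finite subgroups of $\GL_2(D)$, filter by (P1) (no proper quotient among the groups already listed), and settle the residual cases with GAP/Wedderga. However, your outline omits the two precise tools that actually make Banieqbal's long list tractable, and without them the remarks about ``bounded order'' and a ``degree constraint'' are too soft to carry the case analysis.

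First, you need the observation (the paper's property~(P3)) that, since $G$ fails the hypothesis of Proposition~\ref{CSPMetabelian}, $G$ is not strongly monomial and in particular not abelian-by-supersolvable. That alone disposes of every group in Banieqbal's Theorem~3.8 (all supersolvable) in one line; without it you would be chasing those families case by case. Second, and more crucially, you need the quantitative bound (the paper's~(P4)): if $g\in G$ has order $n$ then $\varphi(n)\le 4$, and $\varphi(n)\le 2$ if $g$ is central. This follows because $G$ embeds in $M_2(D)\subseteq M_4(\C)$ with $Z(D)$ equal to $\Q$ or an imaginary quadratic field, so the characteristic polynomial of $g$ over a degree-$\le 2$ field has degree at most~$4$. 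This bound is the workhorse: it forces $\alpha=1$ in the $T^*_\alpha$-extensions, collapses the metacyclic factors $G_{m,r}$ to very small groups, and eliminates those cases in Theorems~4.4--4.7 and~5.8 that contain elements of order $15$, $20$, or $24$. Finally, your reliance on ``analogues of Lemmas~\ref{losHs} and~\ref{G1152}'' is misplaced here: those lemmas are engineered for the imprimitive reduction (reconstructing $N$ from $N/K\cong\SL(2,3)$ and its $g$-conjugate) and play no role in the primitive case, where Banieqbal's theorems already exhibit the candidate groups in explicit extension form. So: right strategy, but you are missing the two lemmas that do the actual cutting.
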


\begin{proof}
By the hypothesis, $G$ satisfies (P1) and (P2). Moreover we may assume that $G$ is a primitive subgroup of an exceptional component $M_2(D)$ of type (EC2) of $\Q G$.
In particular $D$ is either $\Q$, an imaginary quadratic extension of $\Q$ or a totally definite quaternion algebra over $\Q$.
Moreover, $G$ satisfies the following properties:
\begin{itemize}
\item[(P3)] \label{NoSMon} $G$ is not strongly monomial. In particular, $G$ is not abelian-by-supersolvable.
\item[(P4)] \label{OrdenAcotado} If $n$ is the order of an element $g$ of $G$ then $\varphi(n)\le 4$ and if $g\in Z(G)$ then $\varphi(n)\le 2$.
\end{itemize}
(P3)  is a consequence of Proposition~\ref{CSPMetabelian}. To prove (P4) suppose $F=Z(D)$ and let $g\in G$.
Then $F$ is either $\Q$ or an imaginary quadratic extension of $\Q$.
If $g\in Z(G)$ then $F$ has a root of unity of order $n$ and then $\varphi(n)\le [F:\Q]\le 2$.
Assume that $D=F$. Consider $g$ as an element of $M_2(F)$ and let $f$ be the characteristic polynomial of $g$.
Then $g$ is conjugate in $M_2(\C)$ to a diagonal matrix $\diag(\alpha_1,\alpha_2)$ where $\alpha_1$ and $\alpha_2$ are roots of unity.
Let $m_1$ and $m_2$ be the orders of $\alpha_1$ and $\alpha_2$. Then $n=\lcm(m_1,m_2)$.
Furthermore $f(\alpha_1)=f(\alpha_2)=0$ and hence $F(\zeta_n)=F(\alpha_1,\alpha_2)$ is contained in the splitting field $K$ of $f$ over $F$.
Thus $[F(\zeta_n):F]\le 2$ and hence $\varphi(n)=[\Q(\zeta_n):\Q]\le [F(\zeta_n):\Q] \le [F(\zeta_n):F][F:\Q]\le 4$.
Suppose otherwise that $D\ne F$. Then $D$ is a totally definite quaternion algebra over $\Q$ and $M_2(D)$ is embedded in $M_4(\C)$.
Consider $g$ as an element of $M_4(\C)$ and let $f$ be the characteristic polynomial of $g$.
Then $g$ is conjugate in $M_4(\C)$ of a diagonal matrix $\diag(\alpha_1,\alpha_2,\alpha_3,\alpha_4)$ where each $\alpha_i$ a root of unity, say of order $m_i$, and $n=\lcm(m_1,m_2,m_3,m_4)$.
Then $f$ is multiple of the least common multiple of the minimal polynomials of the $\alpha_i$ and $\Q(\zeta_n)=\Q(\alpha_1,\alpha_2,\alpha_3,\alpha_4)$.
As $f$ has degree 4, either $\Q(\zeta_n)=\Q(\alpha_i)$ for some $i$ or $\Q(\zeta_n)=\Q(\alpha_i,\alpha_j)$, with $\Q(\alpha_i)\ne \Q(\alpha_j)$  and $\varphi(m_1)=\varphi(m_2)=2$.
In the first case $\varphi(n)=\varphi(m_i)\le 4$ and in the second case $\varphi(n)=\varphi(m_i)\varphi(m_2)=4$.

In the remainder of the proof our main tool is  the classification of the primitive subgroups of two-by-two matrix rings over division rings obtained by Banieqbal in \cite{baniq}.
This classification is contained in Theorems~3.8, 4.4, 4.5, 4.6, 4.7 and 5.8 of \cite{baniq}.
So $G$ is one of the groups appearing in these theorems and we will consider each case separately.

We start observing that the groups of \cite[Theorem~3.8]{baniq} are all supersolvable and hence they do not satisfy (P3).

In the description of many of the remaining groups it has a relevant role the group $G_{m,r}=\GEN{a}_m\nsp_s \GEN{b}_n$ from (\ref{Gmr}) with $m,r,s,t$ and $n$ satisfying the conditions of (\ref{GmrCond}).

Let $G$ be one of the groups of \cite[Theorem~4.4]{baniq} and let $N=O_2(G)\Cen_G(O_2(G))$.
The statement of the theorem considers  eight types for $G$ denoted (a$_1$), (a$_2$), (b$_1$), (b$_2$), (c), (d$_1$), (d$_2$), and (e).
An inspection of the proof of \cite[Theorem~4.4]{baniq} shows that if $G$ satisfies satisfies conditions (b$_1$), (b$_2$), (c) or (e) then $G/N\cong \D_6$ in contradiction with (P1).
If $G$ satisfies (a$_1$) or (a$_2$) then $G$ contains $T_{\alpha}^*$ with $\alpha\ge 1$.
Therefore $G$ has an element of order $3^{\alpha}$ and hence (P4) implies that $\alpha=1$. Since $T_1^*\cong \SL(2,3)$, in case (a$_1$), $G=\SL(2,3)\times N$ for some $N$.
Then  (P1) implies that $G=\SL(2,3)$, a subgroup of a division ring.
This excludes this case. If $G$ is of type (a$_2$) then $G=\SL(2,3)\Ydown_2 G_{m,r}$, with $v_2(s)=v_2(n)=1$. Then $1\ne n = o_m(r)$ and therefore $m> 2$. Then $a^2\ne 1$ and $G/\GEN{a^2,b^4}\cong \SL(2,3)\Ydown_2 C_4$ in contradiction with (P1).
If $G$ is of type (d$_1$) or (d$_2$) then $G\cong \q_8\nsp G_{m,r}$ with $m$ odd,  $3\mid n$, $(\q_8,a)=1$, $i^b=j$ and $j^b=ij$.
Then $G/\GEN{a,b^3}\cong \SL(2,3)$ in contradiction with (P1).

Suppose now that $G$ is one of the groups of \cite[Theorem~4.5]{baniq} and let again $N=O_2(G)\Cen_G(O_2(G))$.
In this case Banieqbal consider 13 cases denoted (a), (b), $\dots$, (h), (i$_1$), (i$_2$), (j), \dots, (l).
As in the previous paragraph for some of these cases, namely all but the first three, $G/N\cong \D_6$ in contradiction with (P1).
Hence $G$ satisfies either (a), (b) or (c).
In case (a), $G=T^*_{\beta}\Ydown_2 G_{m,r}$ with $2\le v_2(s)$. By (P4), $\beta=1$ and hence $G=\SL(2,3)\Ydown_2 G_{m,r}$ with $4\mid s$ and $t$ odd.
Moreover, by (P4), $4\ge \varphi(m)\ge 2\varphi(t)$. Hence $\varphi(t)\le 2$ and, as $t$ is odd,  necessarily $t=1$ or $3$.
If $t=3$ then $\gcd(r-1,m)=s=4$ and hence $m=12$. This implies that $r\equiv 5 \mod 12$ and we may assume that $r=5$ and $n=o_{12}(5)=2$.
Then $G/\GEN{\SL(2,3),a^3}\cong \D_6$ in contradiction with (P1). Thus $t=1$ and hence $G_{m,r}$ is cyclic of order $m$ with $4\mid m$. Then $\varphi(m)=2$, by (P4), and therefore $m=4$. We conclude that $G=\SL(2,3)\Ydown_2 C_4$, i.e. $G$ is as in item (\ref{G48}) of Theorem~\ref{Main}.
In case (b), $G=\SL(2,3)\Ydown_2 \D_{2^{\alpha+1}m}$ with $\alpha\ge 2$ and $m$ is odd and greater than $1$. Let $a$ be an element of $\D_{2^{\alpha+1}m}$ of order $2^{\alpha}m$. Then $a^4\ne 1$ and $G/\GEN{a^4}\cong \SL(2,3)\Ydown_2 \D_8$, a contradiction with (P1). Finally, in case (c), $G=\q_8 \nsp_2 G_{m,r}$ with $4\nmid s$ and $3\mid n=o_m(r) \mid \varphi(m)\le 4$. This implies that $3=\varphi(m)$ which is not possible because $3$ is not in the image of the Euler function.

Assume that $G$ is one of the groups of \cite[Theorem~4.6]{baniq} and take now $N=O_2(G)$.
Now there are three types (a), (b) and (c) to consider.
In case (a), $G=\SL(2,3)\Ydown_2 \D_{2^{\alpha}}$ with $\alpha\ge 4$.
Then a proper quotient of $G$ is isomorphic to $\D_8$, in contradiction with (P1).
Moreover, from the proof of the theorem one has that for types (b) and (c), we have $G/N\cong \D_6$, again a contradiction with (P1).

Suppose that $G$ satisfies the conditions of \cite[Theorem~4.7]{baniq}.
Then $G=R\times G_{m,r}$ with $\gcd(mn,30)=1$ and $R$ is a subgroup of $\B^*$ containing $O_2(\B^*)$ where $\B^*$ is the following extension of $\B$:
    $$\B^*=\B\rtimes C_2 = ((\GEN{i,j}_{\q_8}\Ydown_2 \GEN{a,b}_{\D_8})\nsp_2 \GEN{u,v}_{\SL(2,5)})\rtimes \GEN{h}_2$$
where $i,j,a,b,u$ and $v$ satisfy the relations of $\B$ and the action of $h$ on $\B$ is given by
    $$j^h=ij,i^h=i\inv,a^h=a,b^h=b\inv,u^h=u^3,v^h=u^{vu}v^2.$$
By (P4) we deduce that $G_{m,r}=1$ and hence $O_2(\B^*)\subseteq G \subseteq \B^*$.
Moreover, \cite[Theorem~4.7]{baniq} also states that either $G \subseteq \B$ and $G/O_2(G)$ is isomorphic to $\D_6, C_5$ or $\D_{10}$ or $G\not\subseteq \B$ and $G/O_2(G)\cong C_5\rtimes C_4$. The  case $G/O_2(G)\cong \D_6$ contradicts (P1)  and inspecting the proof \cite[Theorem~4.7]{baniq} one observes that in the last case the only two-by-two matrix algebra containing $G$ is $M_2(\HQ(\Q(\sqrt{2})))$, contradicting (P2). Therefore we have $O_2(\B^*)\subseteq G\subseteq \B$ and $G/O_2(G)$ is isomorphic to either $C_5$ or $\D_{10}$.
Moreover, $O_2(\B^*)=O_2(\B)=\GEN{i,j,a,b}$, $\B/\O_2(\B)\cong A_5$ and $\B^*/O_2(\B^*)\cong S_5$.
Furthermore all the subgroups $H$ of $S_5$ with $H/O_2(H)\cong C_5$ (respectively, $H/O_2(H)\cong \D_{10}$) are conjugate in $A_5$ to $\GEN{(1,2,3,4,5)}$ (respectively, $\GEN{(1,2,3,4,5),(2,5)(3,4)}$).
This implies that $O_2(G)=O_2(\B)$ and there are exactly two conjugacy classes of subgroups $G$ of $\B^*$ satisfying the conditions $O_2(\B)\le G$ and $G/O_2(\B)\cong C_5$ or $\D_{10}$.
A computer calculation using GAP shows that if $G/O_2(G)\cong C_5$ then $G$ is  the group of item (\ref{G160}) in Theorem~\ref{Main} and if $G/O_2(G)\cong \D_{10}$ then $G$ is the group of item (\ref{G320}) in Theorem~\ref{Main}. This finishes this case.

Finally assume that $G$ satisfies one of the conditions (a)-(i) in \cite[Theorem~5.8]{baniq}.
In case (e), $G$ contains an element of order 20 and, in case (i), $G$ contains an element of order 24.
In case (d), $G$ contains $\SL(2,5)\Ydown_2 C_m$ with $4\mid m$ and thus $G$ has an element of order 20. These three cases are hence excluded by (P4).
In case (a), $G=\SL(2,5)\Ydown_2 \D_{2^{\alpha}m}$ with either $m=1$ and $\alpha\ge 4$ or $m>1$ odd and $\alpha\ge 2$.
However, by (P4), $m=1,3$ or $5$ and as $\SL(2,5)$ has elements of order 3 and 5, if $m\ne 1$ then $G$ has an element of order 15, in contradiction with (P4). Therefore $m=1$ and, again using (P4) we deduce that $\alpha= 4$. Then $G/\SL(2,5)\cong \D_8$, in contradiction with (P1).
In case (c), $G=\A^{\pm}\times G_{m,r}$.
By (P1), $G=\A^{\pm}$ and hence $G$ is as in item (\ref{G240-1}) or item (\ref{G240-2}) of Theorem~\ref{Main}.
The same argument shows that in case (f), $G=\SL(2,9)$ and in case (h), $G=\B$.
So in these cases $G$ either is as in item (\ref{SL9}) or as in item (\ref{G1920}) of Theorem~\ref{Main}.
Assume that $G$ satisfies condition (b).
Then, $G=\SL(2,5)\Ydown_2 G_{m,r}$ with $2\mid s$.
Assume that $M_2(D)$ is an exceptional component of $\Q G$. As $G$ is an epimorphic image of the direct product $\SL(2,5)\times G_{m,r}$, there are simple components $A$ of $\Q \SL(2,5)$ and $B$ of $\Q G_{m,r}$ such that $M_2(D)$ is an epimorphic image of $A\otimes_{\Q} B$ and $\SL(2,5)$ is contained in $A$ and $G_{m,r}$ is contained in $B$.
A dimension argument compared with the Wedderburn decomposition of $\Q \SL(2,5)$ obtained in the proof of Proposition~\ref{Suficiencia},  shows that $A$ is either $M_2\quat{-1,-3}{\Q}$ or $(\Q(\zeta_5)/\Q(\sqrt{5}),-1)$. However, in the second case the centre of $D$ contains $\Q(\sqrt{5})$ in contradiction with the fact that $M_2(D)$ is an exceptional component. Thus $A=M_2\quat{-1,-3}{\Q}$ and hence $B=\Q$.
 Therefore $G_{m,r}$ is contained in $\Q$ and hence it is $C_2$. Thus $G=\SL(2,5)$ in contradiction with the assumption that $G$ is not contained in a division algebra.
Suppose finally that $G$ is of type (g). Then $G$ contains a subgroup $N$ of index 2 isomorphic to $\SL(2,9)$ and an element $g\in G\setminus N$ such that $g^2\in Z(N)$ and $g$ acts on the entries of the elements of $\SL(2,9)$ as the Frobenius automorphism $x\mapsto x^3$. This is the group identified with [1440,4591] in the GAP library of small group. Using Wedderga we obtain the Wedderburn decomposition
    \begin{eqnarray*}
    \Q G &=& 2\Q\oplus 4 M_5(\Q) \oplus M_3(\Q(\zeta_3)) \oplus M_2(\HQ(\Q(\sqrt{3}))) \oplus 2 M_9(\Q) \oplus \\
            && 2 M_{10}(\Q) \oplus M_4(\Q(\zeta_5)/\Q,-1) \oplus M_{16}(\Q) \oplus M_{10}(\HQ(\Q)).
    \end{eqnarray*}
which has not any exceptional component, in contradiction with the hypothesis. This finishes the proof of the Proposition and of Theorem~\ref{Main}.
\end{proof}

\section{Apendix: GAP programs used in the proof of Proposition~\ref{CSPMetabelian}}\label{Program}

In this appendix we include the GAP code used in the proof of Proposition~\ref{CSPMetabelian}.
The following GAP program implements the function \verb+Propiedad+ with three arguments $G, H$ and $K$.
If $(H,K)$ is a strong Shoda pair of a finite group $G$ then \verb+Propiedad(G,H,K)+ returns true if one of the conditions (1)-(7) of Proposition~\ref{SSPComponent} holds.
The groups are  identified using the terminology of the GAP library of small groups.

\bigskip

\begin{verbatim}
Propiedad := function(G,H,K)

local id,N;

if G=H then
  return false;
fi;
id := IdSmallGroup(G);
N:=Normalizer(G,K);
if Size(K)=1 and id in [[6,1] , [8,3] , [12,4]] then
  return true;
fi;
if Size(K)=2 and id = [8,3]  then
  return true;
fi;
if Size(K)=1 and id in [[16,6] , [24,1] , [24,5]] then
  return true;
fi;
if Size(H)=4*Size(K) and Size(G)=2*Size(N) and N=H and
            (Size(K)=2 or Size(K)=4) then
  return true;
fi;
if Size(K)=1 and id = [16,8] then
  return true;
fi;
if Size(K)=1 and id in [[24,10] , [24,11] ] then
  return true;
fi;
if (Size(H) = 3*Size(K) or Size(H)=6*Size(K)) and
        Size(G)=2*Size(H) and N=H and Size(K) <> 1 and
        Size(H) mod Size(K)^2 = 0 then
  return true;
fi;
if Size(K)=1 and id = [40,3] then
  return true;
fi;
if Size(K)=1 and id = [32,42] then
  return true;
fi;
if Size(H) = 4*Size(K) and Size(G)=2*Size(N) and
        IdSmallGroup(N/K) = [8,4] and (Size(K)=2 or Size(K)=4) then
  return true;
fi;
if 6*Size(K) mod Size(H) = 0 and Size(G)=2*Size(N) and
        IdSmallGroup(N/K) = [12,1] and Size(K) <> 1 and
        Size(H) mod Size(K)^2 = 0 then
  return true;
fi;
return false;
end;
\end{verbatim}

The program below computes the list \verb+L+ formed the groups $G$ of orders 6, 8, 12, 16, 18, 24, 32, 36, 40, 48, 64, 72 or 144 for which \verb+Propiedad(G,H,K)+ returns true for some strong Shoda pair $(H,K)$ of $G$ and another list \verb+R+ with the groups in \verb+L+ with no proper quotient isomorphic to any of the groups in items (\ref{D6})-(\ref{G72}) of Theorem~\ref{Main}. The list \verb+L+ contains 121 groups while the resulting list \verb+R+ contains the 8 groups in items (\ref{D6})-(\ref{G72}) of Theorem~\ref{Main}.

\bigskip

\begin{verbatim}
LoadPackage("wedderga");
D := [6,8,12,16,18,24,32,36,40,48,64,72,144];
csp := [[6,1],[8,3],[16,6],[16,13],[24,11],[32,50],[40,3],[72,19]];
L := [];
R := [];

for n in D do
 m:=NumberSmallGroups(n);
 Print("\n",[n,m]);
 for i in [1..m] do
  G:=SmallGroup(n,i);
  if not IsAbelian(G) then
   Print("\n",i);
   prop := false;
   ssp := StrongShodaPairs(G);
   nssp := Length(ssp);
   j:=0;
   while not prop and j<nssp do
    j:=j+1;
    x:=ssp[j];
    H:=x[1];
    K:=x[2];
    prop := Propiedad(G,H,K);
   od;
   if prop then
    Add(L,[n,i]);
    NS := Filtered(NormalSubgroups(G),x->Size(x)>1);
    IdNS := SSortedList(NS,x->IdSmallGroup(G/x));
    if Intersection(IdNS,csp)=[] then
     Add(R,[n,i]);
    fi;
   fi;
  fi;
 od;
od;
\end{verbatim}

\bibliographystyle{amsalpha}
\bibliography{ReferencesMSC}

\end{document}